\numberwithin{equation}{section}
\newtheorem{lemma}{Lemma}[section]
\newtheorem{theorem}{Theorem}[section]
\newtheorem{proposition}{Proposition}[section]
\newcommand{\bp}{\begin{proposition}}
\newcommand{\ep}{\end{proposition}}
\newcommand{\bth}{\begin{theorem}}
\newcommand{\et}{\end{theorem}}
\newcommand{\be}{\begin{equation}}
\newcommand{\ee}{\end{equation}}
\newcommand{\bal}{\begin{align}}
\newcommand{\eal}{\end{align}}
\newcommand{\bi}{\begin{itemize}}
\newcommand{\ei}{\end{itemize}}
\newcommand{\la}{\label}
\newcommand{\noi}{\noindent}
\newcommand{\boE}{{\bold E}}
\renewcommand{\a}{\alpha}
\renewcommand{\b}{\beta}
\renewcommand{\d}{\delta}
\newcommand{\D}{\Delta}
\newcommand{\e}{\varepsilon}
\newcommand{\g}{\gamma}
\newcommand{\G}{\Gamma}
\renewcommand{\l}{\lambda}
\renewcommand{\L}{\Lambda}
\newcommand{\var}{\varphi}
\newcommand{\s}{\sigma}
\renewcommand{\th}{\theta}
\renewcommand{\O}{\Omega}
\renewcommand{\o}{\omega}
\newcommand{\z}{\zeta}
\newcommand{\x}{\times}
\renewcommand{\i}{\infty}
\newcommand{\p}{\partial}
\newcommand{\bE}{{\mathbb E}}
\newcommand{\bP}{{\mathbb P}}
\newcommand{\bR}{{\mathbb R}}
\newcommand{\bQ}{{\mathbb Q}}
\newcommand{\bZ}{{\mathbb Z}}
\newcommand{\bT}{{\mathbb T}}
\newcommand{\bs}{{\bigskip}}
\newcommand{\ms}{{\medskip}}
\newcommand{\cD}{{\mathcal D}}
\newcommand{\cG}{{\mathcal G}}
\newcommand{\cL}{{\mathcal L}}
\newcommand{\cA}{{\mathcal A}}
\newcommand{\cM}{{\mathcal M}}
\newcommand{\cN}{{\mathcal N}}
\newcommand{\cP}{{\mathcal P}}
\newcommand{\cF}{{\mathcal F}}
\newcommand{\cH}{{\mathcal H}}
\newcommand{\cB}{{\mathcal B}}
\newcommand{\cI}{{\mathcal I}}
\newcommand{\cC}{{\mathcal C}}
\newcommand{\cS}{{\mathcal S}}
\newcommand{\cT}{{\mathcal T}}
\newcommand{\cJ}{{\mathcal J}}
\newcommand{\cZ}{{\mathcal Z}}
\newcommand{\cQ}{{\mathcal Q}}
\renewcommand{\leq}{\leqslant}
\renewcommand{\geq}{\geqslant}
\newenvironment{example}{\refstepcounter{theorem}\par\medskip\noindent{\bf
Example~\thetheorem~~}}{\unskip\nobreak\hfill\hbox{}}
\newenvironment{remark}{\refstepcounter{theorem}\par\medskip\noindent{\bf
Remark~\thetheorem~~}}{\unskip\nobreak\hfill\hbox{}}
\newenvironment{definition}{\refstepcounter{theorem}\par\medskip\noindent{\bf
Definition~\thetheorem.}}{\unskip\nobreak\hfill\hbox{\medskip \smallskip}}
\title{The random Arnold Conjecture: a new probabilistic Conley-Zehnder Theory for symplectic maps}
\author{\'Alvaro Pelayo\,\,\,\,\,\,\,\, Fraydoun Rezakhanlou}
\date{}
\begin{document}

\maketitle

\begin{abstract}
We take the first steps to develop Conley\--Zehnder Theory, as conjectured by Arnold, in the world
of probability. As far as we know, this paper provides the first probabilistic theorems about the density of fixed points of symplectic twist
maps in dimensions greater than $2$. In particular we will show that, when the analogue conditions to classical Conley\--Zehnder 
theory hold,  quasiperiodic symplectic twist maps have infinitely many fixed points almost surely. The paper contains also
a number of theorems which go well beyond the quasiperiodic case.
\end{abstract}

\section{Introduction}

 Conley\--Zehnder theory, as conjectured by Arnold, is one of the the great achievements at the 
intersection of symplectic geometry and Hamiltonian dynamics in the past few decades.

 The motivation for these works can be traced back to Poincar\'e and later to the first developments in
symplectic topology.

In the present paper we take the first steps to develop Conley\--Zehnder Theory in a probabilistic setting. 
 One of our main theorems says that a quasiperiodic twist symplectic map has 
infinitely many fixed points almost surely, provided the analogue conditions to those imposed by Conley and Zehnder in 
their famous theorem for tori, hold. 

A main tool of the paper is the Ergodic Theorem, which allows us to control
the behavior of random symplectic maps in analogy with how topological assumptions such as
compactness are used in Conley\--Zehnder theory. We also use the Ergodic Theorem to evaluate
the density of fixed points.

 \subsection{Poincar\'e's theorem on area preserving maps: from classical to random}
 
The work of Henri Poincar\'e in classical mechanics~\cite{Po93} led him to the famous Poincar\'e\--Birkhoff 
Theorem \cite{Po12} concerning fixed points of area preserving twist maps of an annulus, which he stated in 1912.  It was
Birkhoff~\cite{Bi1,Bi2} who finally proved the result in 1925. 

The result essentially says that such a map
always has at least two fixed points, and these points are genuinely different.  

This result motivated us to pursue similar statements in the context of probability theory, and we
took the first steps to do this in our initial paper in the subject~\cite{PR} published in 2018 (see Section~\ref{sec2} for a
very brief account of the main results of this paper), where we proved
that there is a statement, similar to the  Poincar\'e\--Birkhoff Theorem, for area preserving maps which
are random. 

In \cite{PR} we rely heavily on ``finite dimensional" methods, notably the theory of
generating functions, which allows one to reduce infinite dimensional proofs to the finite dimensional
case.  More concretely, we used the ideas of Chaperon~\cite{Ch1984,Ch1984b, Ch1989} and Viterbo~\cite{Vi2011}.

There is an essential mathematical difference between the classical and the random versions of this result,
and which is also at the heart of the proofs: while the classical result is more topological because it 
is established in setting of compactness, the random result is more analytic because it only makes
sense in the non\--compact world. Hence, in the proof we gave, we used mostly analysis.

 From the
point of view of what statements to expect, in the random setting one expects to have \emph{infinitely
many fixed points}, and to come in families which are genuinely different too.

\subsection{Moving to higher dimensions: the Arnold Conjecture}

From the point of view of symplectic geometry, a result in dimension $2$, while interesting is not
entirely satisfactory. It was Arnold who had the great insight of formulating an analogue of the result
by Poincar\'e\--Birkhoff in higher dimensions. He saw, that one should consider ``symplectic maps"
instead of ``volume preserving maps", and formulated the famous Arnold Conjecture.  
This conjecture has generated an immense amount of research in symplectic geometry in the past few decades. 

Essentially the conjecture says that if $(M,\omega)$ is a compact symplectic manifold, then
 any time periodic Hamiltonian diffeomorphism has at least as many fixed points as a smooth function has critical points.

\subsection{A breakthrough by Conley and Zehnder, and beyond}

Conley and Zehnder~\cite{CoZe1983} made the first breakthrough on the
Arnold conjecture, proving it for the $2d$\--dimensional torus; more
precisely they proved that  any smooth symplectic map of the $2d$\--dimensional torus that is isotopic to identity has at least 
$2d+1$ many fixed points. 

The work by Conley and Zehnder was followed by major works by Floer where he developed the ideas of what now is
known as Floer theory~\cite{Fl88,Fl89,Fl89b,Fl91}, and works of many others,
including Hofer, Hofer-Salamon, Liu-Tian,  Ono, and Weinstein~\cite{Ho85,HS95,LT98,On95,We86}.

 \subsection{Goal of this paper: towards a probabilistic Arnold Conjecture and a proof of the random Conley\--Zehnder Theorem}
 
Our goal in this paper is to take the first steps to understand the Arnold Conjecture in the world of probability, by
establishing the Conley\--Zehnder theorem for random symplectic twist maps. 

At this time much of  the technical machinery that is needed to remove the assumption ``twist" is not yet available, nonetheless we believe that a much more general result will be
possible in the future, and as such we state it as  a meta-goal in Section~\ref{sec1}.

The statements and proofs we present in the paper we believe are quite new, in the sense that we are not aware of results in this direction, beyond what
we did in dimension $2$ in our paper from five years ago~\cite{PR}. Indeed ~\cite{PR}  concerns area preserving maps in dimension $2$, so it is not in
that sense a very ``symplectic" paper, while in the current paper we treat any dimension, in the spirit of the original Arnold Conjecture and Conley\--Zehnder theory;
so the context of our current paper is indeed, symplectic.

  \subsection{Novelty of the paper: statements and proof techniques}
   
 Our  point of view in our previous paper~\cite{PR} in dimension $2$ was mainly the classical theory of generating functions, 
 more specifically Chaperon's viewpoint \cite{Ch1984,Ch1984b, Ch1989}.  This point of view has been further advocated by Viterbo~\cite{Vi2011}.
 
We believe that the ideas of the present paper --- both involved in the statements and in the proofs ---
 are new, and are developed from combining ergodic and symplectic  methods. Indeed,  as far as we know, our paper 
provides the first probabilistic  theorems about the density of fixed points of symplectic twist maps in dimensions greater than $2$.

We recommend Hofer\--Zehnder~\cite{HoZe1994} and Polterovich~\cite{Pol01} for treatments of different aspects of symplectic topology.   
We refer to  Gol\'e~\cite{Go2001} for a treatment of symplectic twist maps and to Adler\--Taylor~\cite{AT07,AT09} for treatments of certain
 geometric aspects of   randomness. 
 
  In particular we refer to~\cite{AT07,AT09,AW} for  thorough discussions of Kac-Rice type formulas for level sets of Gaussian random fields.

\subsection{Structure and main achievements of the paper} 

In Section~\ref{sec1} we formulate the main goal of the paper (Meta-Goal and Stochastic Analogue of Conley\--Zehnder Theorem) and formulate two of our main results: 
Theorem~\ref{MGT} and  Theorem~\ref{th1.3}.  These two results concern quasiperiodic symplectic maps and are simpler to state, but the paper goes well beyond this case, so in this section we also announce the other main results of the paper: Theorem~\ref{th3.3},  Proposition~\ref{prop4.1}, Theorem~\ref{th4.2} and Theorem~\ref{th5.1}.
 
In Section~\ref{pre} we study the existence of generating functions.  Notably, in Theorem~\ref{th3.3} we prove that the ``generating function" is stationary.  This allows us in
Proposition~\ref{prop4.1} to have an almost sure candidate for the density of fixed points, meaning by ``density" the number of fixed points in a box of side 
$2\ell$ divided
by the volume of the box.

 This poses the problem of deciding whether the density is positive, which we achieve in Section~\ref{cpsp} and Section~\ref{qsp} by deriving an explicit formula
for this density in two cases: Theorem~\ref{th4.2} and Theorem~\ref{th5.1}. One of these cases has to do with quasiperidic symplectic twist diffeomorphisms which would lead to Theorem~\ref{MGT} and Theorem~\ref{th1.3}. 
We would prefer to give a rather informal statements of these theorems in this section, and provide detailed and precise 
versions of these theorems later in the paper as
 Theorems~\ref{th5.1} and \ref{th6.2}.

Section~\ref{ode} is devoted to the properties of time-one map of stationary Hamiltonian ODEs. These properties would allow us to deduce Theorem~\ref{th1.3} (equivalently Theorem~\ref{th6.2}) from Theorem~\ref{MGT} (equivalently Theorem~\ref{th5.1}).

Finally, in the appendix (Section~\ref{sec2}) we review the $2$\--dimensional case, that is, the random Poincar\'e\--Birkhoff Theorem. This case is much simpler to 
discuss and relies on more standard tools, so we believe that it  can serve as a warm up for the results that  of the present paper.

\subsubsection*{Acknowledgments}
Both authors have been generously funded by a BBVA (Bank Bilbao Vizcaya Argentaria) Foundation Grant for Scientific Research Projects with project title \emph{From Integrability to Randomness in Symplectic and Quantum Geometry (FITRISAQG)}. 

During the preparation for some parts of this work, FR was hosted by Max Fathi at Universit\'e Paris Cit\'e during June of 2023. He is thankful for 
the invitation and hospitality of UP Cit\'e. During the final stages of this work
(July 2023), AP was visiting the University of Cantabria and UIMP
(Universidad International Men\'endez Pelayo), and he is thankful to these
institutions for their hospitality.

 AP is also thankful to the Dean of the School of Mathematics of the Complutense University of Madrid Antonio Br\'u and the Chair of the Department of Algebra, Geometry and Topology Rutwig Campoamor for their support and  excellent resources he is being provided with to carry out the aforementioned BBVA project.

\section{Meta-Goal, main results on quasiperiodic symplectic twists, and announcements of results beyond the quasiperiodic case} \label{sec1} 

We very briefly review the classical Conley\--Zehnder theory and then establish its probabilistic analogue, which is our main result.

 \subsection{Conley\--Zehnder Theory}

We are interested in extending to the stochastic setting the following classical result of Conley and Zehnder.

 \begin{theorem}[Conley\--Zehnder~\cite{CoZe1983}] \label{1.1} 
Every smooth time $1$\--periodic Hamiltonian vector field on the standard torus $\bT^{2d}$ has at least $2d+1$ contractible periodic orbits.
 \end{theorem}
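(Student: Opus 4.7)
The plan is to reduce the periodic orbit problem for the Hamiltonian vector field to a critical point problem for a smooth function on a manifold with the same Lusternik--Schnirelmann invariants as $\bT^{2d}$, and then to count critical points using cup length.

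First, I would lift the time-$1$ map $\phi_1$ of the Hamiltonian flow to a symplectomorphism $\tilde\phi_1\colon \bR^{2d}\to\bR^{2d}$ commuting with the standard $\bZ^{2d}$-action. Contractible $1$-periodic orbits of the vector field on $\bT^{2d}$ correspond exactly to $\bZ^{2d}$-orbits of genuine fixed points of $\tilde\phi_1$ (as opposed to points shifted by a non-trivial lattice vector).

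Second, I would invoke a broken-trajectory / finite-dimensional reduction of Chaperon type. Pick $N$ large and factor $\tilde\phi_1=\psi_N\circ\cdots\circ\psi_1$ into $C^1$-small symplectomorphisms, each of which admits a generating function of mixed type. Concatenating these yields a smooth function
\[
W\colon (\bR^{2d})^N\to\bR
\]
whose critical points are in bijection with discrete trajectories $(x_1,\dots,x_N)$ satisfying $\psi_i(x_i)=x_{i+1}$ and $x_{N+1}=x_1$, i.e., with fixed points of $\tilde\phi_1$. By $\bZ^{2d}$-equivariance of the construction, $W$ descends to a smooth function
\[
w\colon \bT^{2d}\times\bR^{2d(N-1)}\to\bR
\]
whose critical points parametrise contractible $1$-periodic orbits, and which is quadratic at infinity in the auxiliary variables.

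Third, I would perform a Lyapunov--Schmidt reduction in the fibre $\bR^{2d(N-1)}$, using the non-degeneracy of the quadratic-at-infinity behaviour coming from the fact that each $\psi_i$ is close to the identity. This produces a smooth function $w_{0}\colon\bT^{2d}\to\bR$ whose critical points correspond to the contractible $1$-periodic orbits of the original Hamiltonian vector field. Since the cup-length of $\bT^{2d}$ equals $2d$, Lusternik--Schnirelmann theory guarantees at least $2d+1$ critical points of $w_0$, giving the desired estimate.

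The main obstacle is the reduction step: one must verify that the fibrewise Hessian of $w$ is uniformly non-degenerate so that the Lyapunov--Schmidt argument gives a globally defined function on the compact torus, and that no critical points escape to infinity in the auxiliary variables — this is where the quadratic-at-infinity behaviour is essential. A secondary, more cosmetic issue is to phrase the LS bound so that degenerate critical points still contribute with the correct multiplicity, which is handled by the standard category-based formulation rather than Morse inequalities.
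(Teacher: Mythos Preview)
The paper does not actually prove Theorem~\ref{1.1}; it is quoted as a classical result of Conley and Zehnder and serves only as motivation. The paper merely remarks that Chaperon carried out a proof via generating functions, and then moves on to its own stochastic setting. So there is no ``paper's own proof'' to compare against.

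That said, your outline is essentially the Chaperon broken-geodesics argument the paper alludes to, and it is sound. One comment on Step~3: you phrase the reduction as a Lyapunov--Schmidt collapse all the way down to a function $w_0$ on $\bT^{2d}$, invoking the ``non-degeneracy of the quadratic-at-infinity behaviour.'' Be careful to distinguish two different non-degeneracies here. Quadratic-at-infinity alone does \emph{not} give a globally non-degenerate fibrewise Hessian, and in Chaperon's original argument one does not reduce to a function on $\bT^{2d}$; one instead applies Lusternik--Schnirelmann theory directly to the function on $\bT^{2d}\times\bR^{2d(N-1)}$ using its quadratic-at-infinity structure. What \emph{does} make your Lyapunov--Schmidt step work is the separate observation that, because each $\psi_i$ is $C^2$-close to the identity, the fibrewise Hessian is a small perturbation of the (explicitly computable, non-degenerate) Hessian for the identity map, and hence remains invertible everywhere. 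You should state this as the reason, rather than pointing to the quadratic-at-infinity behaviour, which is a weaker and logically distinct property.
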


 Alternatively, if we write $\var$ for the time one map of the flow of such Hamiltonian vector field, then $\var$ is a symplectic map that has at least $2n+1$ fixed points. Writing 
 $\Phi:\bR^{2d}\to\bR^{2d}$
 for the lift of $\var$, we have a symplectic map  that 
 has at least $2d+1$ fixed points in any box of side length $1$.
 We may state Conley\---Zehnder Theorem in terms of $\Phi$.
 
 \begin{theorem}[Conley\--Zehnder~\cite{CoZe1983}]  \label{cz2}
 Let $\Phi:\bR^{2d}\to\bR^{2d}$ be a symplectic
 diffeomorphism that is homologous to identity. Additionally assume  that $$\o(x):=\Phi(x)-x$$ is $1$-periodic, and
 \be\la{eq1.1}
 \int_{[0,1]^{2d}}\, \o(x)\, {\rm d}x=0. \nonumber
 \ee
  Then $\Phi$ has at least $2d+1$ fixed points in the set
 $[0,1)^{2d}$.
\end{theorem}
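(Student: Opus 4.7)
The plan is to reduce the fixed-point problem for $\Phi$ to counting critical points of a smooth function on $\bT^{2d}$, and then invoke the Lusternik--Schnirelmann lower bound $\mathrm{cat}(\bT^{2d}) = 2d+1$ (equivalently the cup-length estimate). Since $\o = \Phi - \mathrm{id}$ is $\bZ^{2d}$-periodic, $\Phi$ descends to a symplectomorphism $\bar\Phi: \bT^{2d}\to\bT^{2d}$; the hypothesis that $\Phi$ is homologous to the identity together with $\int_{[0,1]^{2d}} \o = 0$ is precisely the vanishing-flux condition, so $\bar\Phi$ is Hamiltonian. The zero-mean hypothesis further distinguishes the given $\Phi$ as the canonical (``Hamiltonian'') lift: along any Hamiltonian isotopy from $\mathrm{id}$ to $\bar\Phi$ the mean of $\Phi_t-\mathrm{id}$ stays zero, so each fixed point of $\bar\Phi$ on $\bT^{2d}$ lifts to a fixed point of $\Phi$ in $[0,1)^{2d}$ with no integer-shift ambiguity. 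It therefore suffices to show that $\bar\Phi$ has at least $2d+1$ fixed points on $\bT^{2d}$.

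To produce these fixed points I would apply Chaperon's broken-geodesic construction. Factor $\bar\Phi = \bar\phi_N \circ \cdots \circ \bar\phi_1$ as a composition of Hamiltonian diffeomorphisms each so $C^1$-close to $\mathrm{id}$ that $\bar\phi_i$ admits a single-valued generating function $S_i(x,Y)$ on $\bT^{2d}\x\bT^{2d}$ characterised by
\[
\bar\phi_i(x,y) = (X,Y) \iff X-x = \p_Y S_i(x,Y), \quad y-Y = \p_x S_i(x,Y).
\]
Chaperon's composed functional
\[
F(x_1,Y_1,\ldots,x_N,Y_N) = \sum_{i=1}^N \bigl[ S_i(x_i,Y_i) + \< x_{i+1}-x_i,\, Y_i \> \bigr], \qquad x_{N+1}:=x_1,
\]
is a smooth function on $\bT^{4dN}$ whose critical points are in bijection with closed $N$-step broken orbits, i.e. with fixed points of $\bar\Phi$. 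A Lyapunov--Schmidt reduction in the ``fast'' difference variables $(x_{i+1}-x_i,\,Y_{i+1}-Y_i)$, valid for $N$ sufficiently large by the $C^1$-smallness of each $\bar\phi_i$, eliminates the auxiliary variables and leaves a reduced function $\tilde F:\bT^{2d}\to\bR$ with the same critical set as $F$. The cup-length bound for $\bT^{2d}$ then forces $\tilde F$ to have at least $2d+1$ critical points.

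The main obstacle is the global (rather than merely local) validity of the Lyapunov--Schmidt reduction on the torus: one needs uniform quantitative estimates on the $S_i$ under small $C^1$-perturbations of the identity to apply the implicit function theorem globally, and this forces $N$ to be chosen depending on the $C^1$-norm of $\o$. This quantitative step is the technically delicate but standard heart of the finite-dimensional approach developed by Chaperon \cite{Ch1984,Ch1989} and Viterbo \cite{Vi2011}; once it is in place, the theorem follows from the classical topological estimate on the torus.
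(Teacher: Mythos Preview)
The paper does not give its own proof of this theorem; it is stated as a classical background result of Conley and Zehnder, accompanied only by the remark that ``Chaperon \cite{Ch1984} carried out a proof of the Conley--Zehnder Theorem using generating functions, and the present paper pushes these ideas further.'' Your sketch is precisely Chaperon's broken-geodesic/generating-function argument, so it aligns with the approach the paper endorses, and there is nothing further in the paper to compare against.
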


Chaperon \cite{Ch1984} carried out a proof of the Conley\--Zehnder Theorem using generating functions, and the present paper pushes these ideas
further.

 \subsection{Random Conley\--Zehnder Theory: quasiperiodic case (the simplest beyond periodic)}

For the stochastic analogue of \cite{CoZe1983}, we take a 
symplectic diffeomorphism $\Phi:\bR^{2d}\to\bR^{2d}$ of the form 
$$
\Phi(x)=x+\o(x), 
$$
where  $\o$ is selected randomly, and the map
$$x\mapsto\o(x)$$ is a stationary 
process with respect to the $2d$-dimensional translation
$$\th_a\o(x)=\o(x+a).$$
 
 Our typical result would assert that 
even if $\o$ is not periodic, then generically the corresponding $\Phi$ would have infinitely 
many fixed points. 

In fact we will use probabilistic means to select a generic $\Phi$. To explain this, let us set
 $$
  \cS:=\Big\{\Phi \colon \bR^{2d}\to\bR^{2d} \,:\, \Phi\, \textup{ symplectic diffeomorphism},\, \o:=\Phi-{\rm id} \,   \textup{ bounded} \Big\},
   $$
and define $$\th'_a\Phi(x)=\Phi(x+a)-a$$  so that $$\th'_a\Phi(x)-x=\th_a\o(x).$$  
We equip $\cS$ with the topology of ${\rm C}^1$ norm and consider the $\s$-algebra $\cB$ of Borel subsets of $\cS$.  
 \begin{definition}
 We say  that a probability measure $\cP$ on $\cS$ is \emph{$\th'$-invariant and 
 ergodic} if the following conditions are true:
 \begin{itemize}
 \item[(i)] For every $A\in\cB$ we have that $\cP\big(\th'_a A\big)=\cP(A)$.
 \item[(ii)] If there exists $A\in\cB$ such that $\th'_aA=A$ for all
 $a\in\bR^{2d}$, then $\cP(A)\in\{0,1\}$.
 \end{itemize}
In the same vein, we can talk about a probability measure $\bP$ that is 
 \emph{$\th$-invariant and  ergodic}
 \end{definition}

\begin{example}{\em {(Almost periodic\--twists)}}
To ease the notation, we write $n$ for $2d$.
Given a continuous function $\bar \Phi:\bR^{n}\to\bR^{n}$, let us assume that the set $\G=\{\th'_a\bar\Phi\}$ is precompact with respect to the topology of uniform convergence. 
We write $\G'$ for the topological closure of $\G$.
 By the classical theory of almost periodic functions,  the set $\G'$ can be turned into a compact topological group and for $\cP$, we may choose a normalized {\em{Haar}} measure on $\G'$. We say $\bar\Phi$ is \emph{quasiperiodic} if the group
$\G'$ is finite dimensional (and therefore isomorphic to a torus). 
More concretely,
let $\bar \o:\bR^N\to\bR$ be a $1$-periodic ${\rm C}^2$ function,
and let $A\in\bR^{N\x n} $ be a matrix. 
Then the map 
$$\bar\Phi(x)=x+\bar\o({Ax}),$$ is quasiperiodic.
\end{example}

\bs\noi
{\bf Meta-Goal (Stochastic Analogue of Conley\--Zehnder Theorem):}       
    \emph{Let $\cP$ be a  $\th'$-invariant ergodic  measure  on $\cS$ such that
  \be
  \la{eq1.2}
  \int_{\cS} \Phi(0)\ \cP({\rm d}\Phi)=0. \nonumber
  \ee
   Assume also that $\Phi$ is homologous to the identity map, 
$\cP$-almost surely. Then  $\Phi$ has infinitely many fixed point 
 $\cP$-almost surely.}
 
 \vspace{2mm}

We establish this Meta-Goal in an important case:
We say that  $$\Phi(q,p)=\big(Q(q,p),P(q,p)\big)$$ is {\em twist} if for every $p\in\bR^d$, the map $q\mapsto Q(q,p)$ is a diffeomorphism
of $\bR^d$.

\begin{theorem} \label{MGT}
The Meta-Goal holds when $\Phi$ is a quasiperiodic symplectic twist. 
\end{theorem}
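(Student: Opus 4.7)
The plan is to reduce the fixed\--point problem for the random symplectic twist $\Phi$ to a critical\--point problem for a stationary scalar field, and then invoke the ergodic theorem to promote a \emph{positive} density of critical points into \emph{infinitely many} of them, $\cP$\--almost surely.

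First, I would exploit the twist hypothesis. Since for each $p\in\bR^d$ the map $q\mapsto Q(q,p)$ is a diffeomorphism, $(q,Q)$ serves as a coordinate system on $\bR^{2d}$, and the symplectic condition ${\rm d}P\wedge{\rm d}Q={\rm d}p\wedge{\rm d}q$ integrates to a scalar generating function $S(q,Q)$ satisfying $p=-\p_qS(q,Q)$ and $P=\p_QS(q,Q)$. A direct computation then shows that fixed points of $\Phi$ correspond bijectively to critical points of the diagonal function $W(q):=S(q,q)$, since the combined equations $Q=q$ and $P=p$ boil down to $\p_qS(q,q)+\p_QS(q,q)=0$. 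By Theorem~\ref{th3.3}, the stationarity of $\o$ under the shift $\th$ transfers to a stationarity of $\n W$, making it a stationary random vector field on $\bR^d$ whose zeros index the fixed points of $\Phi$.

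Next, Proposition~\ref{prop4.1} supplies an almost sure value $\rho$ for the density of zeros of $\n W$ in boxes of side $2\ell$ as $\ell\to\i$. In the quasiperiodic setting, writing $\o(x)=\bar\o(Ax)$ with $\bar\o$ a $1$\--periodic ${\rm C}^2$ function on $\bR^N$, the random field $\n W$ pulls back from a smooth field on the compact torus $\bT^N$, whose translation flow is ergodic under Haar measure; this realizes $\cP$ as (normalized) Haar measure on a finite\--dimensional group. The zero\--mean hypothesis $\int\Phi(0)\,\cP({\rm d}\Phi)=0$ translates precisely into vanishing of the Haar integral of the lifted field on $\bT^N$, which is the probabilistic analogue of the normalization \eqref{eq1.1} used in the deterministic Conley\--Zehnder Theorem~\ref{cz2}. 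Applying the explicit Kac\--Rice type density formula of Theorem~\ref{th5.1}, $\rho$ is rewritten as a finite\--dimensional integral involving $|\det\n^2 W|$ on $\bT^N$. The classical Conley\--Zehnder count guarantees at least $2d+1$ critical points of the lifted generating function per fundamental period on $\bT^N$, forcing $\rho>0$; combined with the ergodic theorem on boxes of increasing size, this immediately yields infinitely many fixed points of $\Phi$ in $\bR^{2d}$, $\cP$\--almost surely.

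The principal obstacle, to my mind, is matching the Kac\--Rice integral to a genuinely positive topological count. Two subtleties need care. First, one must verify that generic critical points are non\--degenerate $\cP$\--almost surely so that the Jacobian $|\det\n^2 W|$ does not vanish on the support of the density, and no cancellation can sneak into the formula for $\rho$. Second, one must check that the zero\--mean hypothesis truly prevents a linear drift in $W$ which, in the non\--periodic case, would push critical points off to infinity; this phenomenon has no analogue in the purely periodic setting of Conley and Zehnder and genuinely requires the full strength of stationarity and ergodicity of $\cP$.
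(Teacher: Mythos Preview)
Your skeleton is right and matches the paper's route: pass from fixed points to critical points of a stationary generating function (Theorem~\ref{th3.3}), use Proposition~\ref{prop4.1} to get an almost-sure density, and use the explicit quasiperiodic formula of Theorem~\ref{th5.1} to certify positivity. Two steps, however, do not go through as written.

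First, the generating function is of the wrong type. The twist hypothesis in Definition~\ref{gdd}(v) is that $q\mapsto Q(q,p)$ is a diffeomorphism for each fixed $p$; this makes $(Q,p)$ a global coordinate system, \emph{not} $(q,Q)$. (For instance, $\Phi={\rm id}$ satisfies the hypothesis, yet $(q,Q)=(q,q)$ collapses.) Your function $S(q,Q)$ would require the other twist condition, that $p\mapsto Q(q,p)$ be a diffeomorphism, which is not assumed anywhere. The generating function actually produced by Theorem~\ref{th3.3} is $W(Q,p)=Q\cdot p+w(Q,p)$ on $\bR^{2d}$, and fixed points of $\Phi$ are exactly the critical points of $w$ --- a function of $2d$ variables, not $d$. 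Theorem~\ref{th3.3} gives stationarity of this $w$, not of your diagonal $S(q,q)$; the theorem cannot be cited for the latter.

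Second, your positivity argument does not land. You invoke the ``classical Conley--Zehnder count'' on $\bT^N$, but Conley--Zehnder concerns Hamiltonian fixed points on $\bT^{2d}$, not critical points of a scalar on $\bT^N$; more to the point, the quantity in \eqref{eq5.5} counts zeros of $\o\mapsto\nabla\hat w(\o)A$, i.e.\ \emph{leafwise} critical points along the irrational linear foliation of $\bT^N$, which are not supplied by any classical fixed-point theorem. The paper's argument (Theorem~\ref{th5.2}) is more elementary: $\hat w$, being continuous on the compact torus $\bT^N$, attains an extremum at some $\bar\o$, where $\nabla\hat w(\bar\o)=0$ and hence $\nabla\hat w(\bar\o)A=0$; provided $A^*{\rm D}^2\hat w(\bar\o)A$ is invertible there, the ``Moreover'' clause of Theorem~\ref{th5.1} (proved via the implicit function theorem) shows $\L(0)$ contains an $(N-2d)$-dimensional surface of positive Hausdorff measure on which the integrand of \eqref{eq5.5} is strictly positive. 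No Conley--Zehnder input is needed, and the non-degeneracy concern you flag at the end is handled by exactly this mechanism rather than by a genericity argument.
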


We refer to section~5.1 for the regularity of $\Phi$ and a more detailed 
statement of Theorem~\ref{MGT}.

\ms
One natural way of producing such a stochastic symplectic map is by using time one map of a Hamiltonian ODE for which the Hamiltonian function is a stationary process with respect to the translation.

 To prepare for the statement of our results, let us write $\cH_0$ for the set of  ${\rm C}^2$ functions
 $$
 H:\bR^{2d}\x \bR\to\bR,
 $$
 such that:
 \begin{itemize}
 \item[(i)]
  $H(x,t+1)=H(x,t)$ for all $(x,t)\in\bR^{2d}\x \bR$,
  \item[(ii)]
  $\nabla H$ is uniformly bounded. 
  \end{itemize}
  We also define translation of $H\in\cH_0$ by
 \[
 \th_aH(x,t)=H(x+a,t).
 \]
 We equip $\cH_0$ with the topology of ${\rm C}^2$ norm and consider the $\s$-algebra $\cB$ of Borel subsets of $\cH_0$.

 We write $$X_H=J\nabla H(x,t)$$ for the Hamiltonian vector field associated with the Hamiltonian function $H$, and $\phi^H_t$
 for the flow of $X_H$.  The following is a stochastic analogue  of Theorem~\ref{1.1}. The regularity of $H$ in the statement
 will be made precise later in the paper (Theorem~\ref{th6.2}).
 
 \begin{theorem} \label{th1.3} 
 Let $\bP$ be a $\th$-invariant and 
 ergodic probability measure on $\cB$. If $H$
is sufficiently small, then the Hamiltonian vector field $X_H$ has infinitely many $1$-periodic orbits $\bP$-almost 
 surely.
 \end{theorem}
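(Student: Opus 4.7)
The plan is to reduce Theorem~\ref{th1.3} to Theorem~\ref{MGT} by passing to the time-one map $\Phi_H := \phi_1^H$ of the Hamiltonian flow of $X_H$. Because $H$ is $1$-periodic in $t$, fixed points of $\Phi_H$ correspond bijectively to initial points of $1$-periodic orbits of $X_H$, so it suffices to produce infinitely many fixed points of $\Phi_H$. Standard ODE theory applied to $H \in \cH_0$ yields that $\Phi_H$ is a symplectic ${\rm C}^1$-diffeomorphism of $\bR^{2d}$ and that $\o_H := \Phi_H - \id$ is uniformly bounded, so $\Phi_H \in \cS$.

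The first step is to push $\bP$ forward along $H \mapsto \Phi_H$ to a measure $\cP$ on $\cS$ and verify that $\cP$ is $\th'$-invariant and ergodic. The key identity underpinning the transfer is
\[
\phi_t^{\th_a H}(x) \;=\; \phi_t^H(x + a) \;-\; a,
\]
obtained by substituting $y(t) = x(t) + a$ into the ODE $\dot x = J\nabla H(x+a,t)$; at $t = 1$ this reads $\Phi_{\th_a H} = \th'_a \Phi_H$, so $H \mapsto \Phi_H$ intertwines the two translation actions and both $\th'$-invariance and ergodicity of $\cP$ descend from the corresponding properties of $\bP$.

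Next I would verify the remaining hypotheses of Theorem~\ref{MGT} for $\cP$-a.e.\ $\Phi_H$. As the time-one map of a smooth isotopy starting at $\id$, $\Phi_H$ is homologous to the identity. For $H$ sufficiently small in ${\rm C}^2$, writing $\Phi_H(q,p) = (Q,P)$, the block $\p Q/\p q$ of the Jacobian is $I_d + O(\|H\|_{{\rm C}^2})$, hence invertible, and a standard Hadamard-type argument combined with the uniform bound $|Q(q,p) - q| = O(\|H\|_{{\rm C}^1})$ upgrades this to the global twist property that $q \mapsto Q(q,p)$ is a diffeomorphism of $\bR^d$. The normalization $\int_\cS \Phi(0)\,\cP({\rm d}\Phi) = 0$ is more delicate: by stationarity $\bE[\o_H(a)]$ is independent of $a$, so it equals the Birkhoff spatial average $\lim_{\ell \to \i} |B_\ell|^{-1} \int_{B_\ell} \o_H(a)\,{\rm d}a$, and a change-of-variables argument exploiting that $\phi_t^H$ is volume-preserving shows this average differs from $0$ only by a boundary contribution of order $O(\ell^{-1}\|H\|_{{\rm C}^1})$, which vanishes in the limit.

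The main obstacle is that Theorem~\ref{MGT} as stated requires $\Phi$ to be quasiperiodic, a property not automatic for the time-one map of a general stationary ergodic Hamiltonian. I expect the deduction to proceed through the density-of-fixed-points machinery behind Theorem~\ref{MGT}: Proposition~\ref{prop4.1} supplies an almost sure candidate for the density of fixed points in any $\th'$-invariant ergodic setting, and an explicit positivity argument in the spirit of Theorem~\ref{th5.1}, combined with the smallness of $H$, ensures that this density is strictly positive. Strict positivity forces infinitely many fixed points of $\Phi_H$ $\cP$-almost surely, and hence infinitely many $1$-periodic orbits of $X_H$, $\bP$-almost surely.
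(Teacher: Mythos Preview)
Your overall architecture is exactly the paper's: reduce to Theorem~\ref{MGT} via the time-one map, using the intertwining $\Phi_{\th_a H}=\th'_a\Phi_H$ (this is Proposition~\ref{pro6.1}(i)), the twist property for small $H$ (Proposition~\ref{pro6.1}(iii), with the explicit threshold $\ell<\log 2$), and the vanishing of $\bE[\Phi_H(0)]$ via an averaging/boundary argument (Proposition~\ref{pro6.1}(ii)). So the first three paragraphs are on target.

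The gap is in your final paragraph, and it stems from a misreading of the scope of Theorem~\ref{th1.3}. As the paper states just before the theorem, the regularity of $H$ is ``made precise later in the paper (Theorem~\ref{th6.2})'': the precise version is \emph{only} for quasiperiodic $H$, i.e.\ $H(x,t,\o)=K(\Theta_{Ax}\o,t)$ with $\o\in\bT^N$. The paper does \emph{not} prove Theorem~\ref{th1.3} for a general stationary ergodic $\bP$, and your closing appeal to ``an explicit positivity argument in the spirit of Theorem~\ref{th5.1}'' would not work outside the quasiperiodic (or density-of-$(\hat f,\pmb D\hat f)$) settings, since that is exactly the open problem the paper flags. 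So you should not be trying to sidestep quasiperiodicity; you should be \emph{verifying} it for $\cF(\Phi_H)$.

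Concretely, two ingredients are missing from your outline. First, and most importantly, Proposition~\ref{pro6.1}(vii): if $H$ is quasiperiodic then $\cF(\phi^H)$ is quasiperiodic. The argument is that $\o(t)=\o+Ax(t)$ solves the ODE $\dot\o=AJA^*\nabla K(\o,t)$ on $\bT^N$, whose flow $\psi_t$ is periodic; since $A$ has trivial null space, $\z_t(\o):=\phi_t^{H(\cdot,\o)}(0)$ is periodic in $\o$, and then $\phi^{H(\cdot,\o)}(x)-x=\z_1(\o+Ax)$. Second, you have not addressed the regularity hypothesis $\pi\in\widehat{\mathfrak H}^{-1}(\cQ)$ required by Theorem~\ref{th3.3}/Theorem~\ref{th5.2}; the paper handles this in Proposition~\ref{pro6.1}(v), showing that the correlation-decay condition \eqref{eq6.4} on $\nabla H$ transfers to the required condition \eqref{eq6.5} on $\phi^H(0)$ via a volume-preserving change of variables. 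With those two pieces in place, the conditions of Theorem~\ref{th5.2} are met and the conclusion follows.
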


 \subsection{Random Conley\--Zehnder Theory: well\--beyond the quasiperiodic case}

It is important to note that our results go well beyond the quasiperidic case covered in the theorems presented in this section, as one can see from
  the theorems proven in Sections~\ref{pre}\--\ref{qsp}.

Indeed, the main result of Section~\ref{pre}, and probably the hardest and most substantial
 result of this paper, is Theorem~\ref{th3.3} which reduces the problem of
counting fixed points to counting the critical points of a stationary process.

In two cases we provide an explicit formula for the density of fixed points, which is achieved in Section~\ref{cpsp} (Theorem~\ref{th4.2}) in one case (in the case
that some random variable has a density) and then the second case of  Section~\ref{qsp} is the quasiperiodic case (Theorem~\ref{th5.1}); this is the only section where
quasiperiodic case appears at all, but we stated results earlier for this case because it is simpler to formulate.

\subsection{Examples of stationary Hamiltonian functions} 
It is important to note that the quasiperiodic case is {\bf the least random of all} and in that sense the least interesting
 from the point of view of stochastic processes. It is also the simplest case to deal with. We now describe some examples of stationary probability measures
on $\cH_0$. We equip $\cH_0$ with the topology of uniform convergence.

\begin{example}\la{ex2.7} {\em{(Periodic Hamiltonian Functions)}}
As the simplest example, take any $H_0(x,t)$ in $\cH_0$, that is $1$-periodic in $x$-variable, and set
\begin{equation}\label{eq2.1}
O(H_0)=\left\{\th_aH_0\ :\ a\in\bR^{2d}\right\}.
\end{equation}
Since $H_0$ is a $1$-periodic function, 
the set $O(H_0)$ is homeomorphic to $\bT^{2d}$. Under this homeomorphism, the translation $\th$ becomes the standard translation $\Theta$ on $\bT^{2d}$.
The only $\th$-invariant probability measure $\bP$ on $O(H_0)$ is the push forward of the Lebesgue measure on $\bT^{2d}$ under the map
$a\mapsto \th_a H_,$. 
\end{example}

\begin{example}\la{ex2.8} {\em{(Quasiperiodic Hamiltonian Functions)}}
Given $N\geq n$, pick a ${\rm C}^2$ function 
$K_0(\o,t)$, 
$$ K_0:\bT^N\times \bR\to\bR,$$
 that is $1$-periodic in $t$.
Pick a matrix $A\in\bR^{N\x n}$ that satisfies the following condition:
 \begin{equation}\label{eq2.2}
mA=0,\ \ \ m \in \mathbb{Z}^N\ \ \Rightarrow\ \  m=0.  
\end{equation}
Let
$$H_0(x,t)=K_0(Ax,t),$$ 
and define $O(H_0)$ as in \eqref{eq2.1}. Note that if $N>2d$, the set 
$O(H_0)$ is not closed. However, the condition \eqref{eq2.2} 
guarantees that its topological closure $\overline{O(H_0)}$ consists of
functions of the form 
\[
H(x,t,\o):=K_0(\o+Ax,t),
\]
with $\o\in\bT^N$.
(Here we regard $\bT^N$ as $[0,1]^N$ with $0=1$, and $\o+Ax$ is a {{\rm Mod}}\ $1$ 
summation.)
Assume that $\bP$ is concentrated on the set $\overline{O(H_0)}$.
Again, since $\bP$ is $\th$-invariant, the pull-back of $\bP$ with respect to the transformation
$\o\in\bT^N\mapsto H(\cdot,\cdot,\o)$ can only be the 
Lebesgue measure on $\bT^N$. Note that our main result Theorem~\ref{th1.3} only guarantees
the existence of $1$-periodic orbits for $H(\cdot,\cdot,\o)$, for $\bP$-almost all choices of $\o$.
\end{example}

\begin{example}\la{ex2.9}{\em {(Almost periodic Hamiltonian Functions)}}
Given a function $H_0\in\cH$, let us assume that the corresponding 
$O(H_0)$ is precompact with respect to the topology of uniform convergence. 
 By the classical theory of almost periodic functions,  the set $\overline{O(H_0)}$ can be turned to a compact topological group and for $\bP$, we may choose a normalized {\em{Haar}} measure on $\overline{\O(H_0)}$.
\end{example}

\begin{example}\la{ex2.10}{\em{(Lorenz gas type models)}}
Let us write $\O_0$ for the set of discrete subsets of $\bR^{d}$.
We also write $\bQ_0$ for the law of a Poisson point process of intensity one 
on $\O_0$. We set $\O=\O_0\x\bT^{d}$, and $\bQ=\bQ_0\x \l$,
where $\l$ denotes the Lebesgue measure of $\bT^{d}$. 
On $\O_0$, we have a
natural translation that is denoted by $\tau$: For $\o_0=\{q_i:\ i\in I\}$,
we define
\[
 \tau_q\o_0=\{q_i-q:\ i\in I\}.
\]
As before, let us write $\Theta$ for the translations on the torus $\bT^{d}$.
We define a translation $\hat\th$ on $\O$ by
\[
\hat\th_{(q,p)}(\o_0,a)=\big(\tau_q \o_0,\Theta_p a\big).
\]
The measure 
$\bQ$ is $\hat\th$ invariant and ergodic. Pick a ${\rm C}^2$ function
$K^0(p,t)$ that is $1$-periodic in all the coordinates of $(p,t)$, and a ${\rm C}^2$ function
$V(q,t)$ that is of 
compact support in $q$ and $1$-periodic in $t$.
 Given a realization of $\o=(\o_0,a)$, 
with $\o_0=\{q_i:\ i\in I\}$, we define
a Hamiltonian function
\[
H(q,p,t,\o):=K^0(p+a,t)+\sum_{i\in I}V(q-q_i,t).
\]
The map $\o \mapsto H(\cdot,\cdot,\cdot,\o)$ pushes forward the probability measure
$\bQ$ to a probability measure $\bP$ that is $\th$-invariant and ergodic.
Note that if $x(t)=(q(t),p(t))$ solves the corresponding Hamiltonian ODE
$\dot x=J\nabla H(x,t)$, then the speed $|\dot q(t)|$ is bounded 
by  $\|K^0_p\|_{{\rm C}^0}$. We also have a bound on
$|\dot p(t)|,\ t\in [0,T]$ in terms of the number of $q_i$ in a ball
${\rm B}_{r(T)}(q(0)),$ with a radius $r(T)$ that depends on $T$ only.
From this we deduce that the corresponding Hamiltonian ODE is well-defined
even though $\rm D^2 H$ is not uniformly bounded.
When $K^0$ is instead of the form $K^0(p)=|p|^2/2$, the corresponding 
Hamiltonian ODE is known as a {\em Lorenz gas} with the following interpretation: $x(t)$ is the state of a particle at time $t$ that is interacting via a potential $V$ with immobile particles at random locations $q_i'$s.
\end{example}

\begin{example}\la{ex2.11}
Let us write $\O$ for the set of discrete subsets of $\bR^{2d}$. 
We also write $\bQ$ for the law of a Poisson point process of intensity one 
on $\O$. On $\O$, we have a
natural translation that is denoted by $\th$: For $\o=\{x_i=(q_i,p_i):\ i\in I\}$,
we define
\[
 \th_x\o=\{x_i-x:\ i\in I\}.
\]
The measure 
$\bQ$ is $\th$ invariant and ergodic. Pick a ${\rm C}^2$ function
$K^0(q,p,t)$ that is of 
compact support in $x=(q,p),$ and $1$-periodic in $t$.
 Given a realization of $\o=\{x_i:\ i\in I\}$, we define
a Hamiltonian function
\[
H(x,t,\o):=\sum_{i\in I}K^0(x-x_i,t).
\]
Observe that this sum is finite $\bQ$-almost surely,
because $K^0$ is of compact support, and $\o$ is discrete.
The map $\o \mapsto H(\cdot,\cdot,\o)$ pushes forward the probability measure
$\bQ$ to a probability measure $\bP$ that is $\th$-invariant and ergodic.
We remark that the Hamiltonian vector field 
$$X(x,t,\o)=J\nabla H(x,t,\o)$$ is not a Lipschitz
map. However we conjecture that one should be able to construct 
a nice flow for $X$, $\bP$-almost surely.
\end{example}

\ms
\begin{remark} As we mentioned earlier, our main results in Section 5, namely Theorems~\ref{th5.1} and
\ref{th5.2}, offer an explicit expression (the formula \eqref{eq5.5})
 for the density of
 $1$-periodic orbits in the setting of Example~\ref{ex2.8}. This formula is based 
on the classical Coarea Formula. We speculate two possible extensions of the work of this article that would allow us to study the other examples we formulated above:
\bi
\item We expect an analogue of formula \eqref{eq5.5} to hold for the density of periodic orbits in the setting of almost periodic Hamiltonian ODEs (Example~\ref{ex2.8}).
To derive such a formula, we need an analogue of coarea formula for the Haar measure of a topological group that can be regarded as an infinite dimensional 
torus $\bT^\i$. 

\item
We also conjecture that our Theorem~\ref{th4.2} is applicable 
to the model we described in Example~\ref{ex2.11}. As we mentioned before,
Kac-Rice type formulas are often stated and verified for Gaussian processes.
Because of this, we can build Hamiltonian functions from Gaussian processes to 
produce examples for which our Theorem~\ref{th4.2} is applicable.
\ei
\end{remark}

 \subsection{Abstract setting and Poisson Structure}

In an equivalent formulation of our results, we start from a probability
space $(\O,\cF,\bP)$, and a group of measurable maps $$\big(\th_a:\ a\in\bR^n
\big)$$ with $$\th_{a+b}=\th_a\circ\th_b,$$ such that
$\bP$ is $\th$-invariant and ergodic. In our probabilistic setting,
$(\O,\cF,\th,\bP)$, plays the role of a symplectic manifold.
Needless to say that we have no tangent bundle to make sense of symplectic
forms on $\O$. However, it is possible to make sense of a Poisson structure 
on $\O$ that is inherited from the standard Poisson structure of $\bR^{2d}$
via the translation $\th$. 

In order to explain this, we first define an (unbounded)
operator $\pmb \nabla$ that is acting on measurable functions on $\O$.
For the domain of the definition of this operator we write
$\mathfrak{H}^1(\bP)$
(see also Definition~\ref{def3.1} below). It consists of functions $$f:\O\to\bR$$ such that 
\bi
\item $f\in \rm L^2(\bR)$, and that the map $x\mapsto f(\th_x\o)$ is differentiable
at $x=0$ for $\bP$-almost $\o$. 
This derivative is denoted by $\pmb\nabla f(\o)$.
\item  The function $\pmb\nabla f$ is in $\rm L^2(\bP)$.
\ei

Given a measurable $K:\O\x \bR\to\bR$, we define the corresponding Hamiltonian function $H$ by $$H(x,t,\o)=K(\th_x\o,t).$$
When $K(\cdot,t)\in \mathfrak{H}^1(\bP)$, and is continuous in time, we can talk about the corresponding {\em Hamiltonian vector field}
\[
X_K(\o,t)=J\pmb \nabla K(\o,t).
\]
Observe that when $J\nabla H(x,t,\o)$ is ${\rm C}^1$, then we can talk about its
flow $\phi_t^{H(\cdot,\o)}(x)$. Using this, we can define a flow
$\pmb \phi^K_t$ on $\o$ in the following manner:
\[
\pmb \phi^K_t(\o)=\th_{x(t,\o)}\o,\ \ \ 
{\text{where }}\ \ \ x(t,\o):=\phi_t^{H(\cdot,\o)}(0).
\]
In some sense, $\pmb \phi^K_t$ is the flow of the Hamiltonian (or rather Poissonian) vector field $X_K$. In order to explain this, we first construct the
{\em Poisson structure} 
$$
\pmb\{\cdot,\cdot\pmb\}:\ \mathfrak{H}^1(\bP)\x\mathfrak{H}^1(\bP)\to \rm L^1(\bP)
$$
on $(\O,\cF,\th,\bP)$ given by
$$
\pmb\{ f, g \pmb\}= J(\pmb \nabla f)\cdot (\pmb\nabla g).
$$
It is degenerate (expect when $\O=\bT^{2d}$) because it is induced from the 
$2d$-dimensional symplectic structure of $\bR^{2d}$ on the possibly infinite dimensional space $\O$. Observe that for a function $f\in\mathfrak{H}^1(\bP)$,
\begin{align*}
\frac {\rm d}{\rm dt}f\big(\pmb \phi^K_t(\o)\big)&=
\frac {\rm d}{\rm dt}f\big(\th_{x(t,\o)}\o\big)
=(\pmb\nabla f)\big(\th_{x(t,\o)}\o\big)\cdot \dot x(t,\o)\\
&=(\pmb\nabla f)\big(\th_{x(t,\o)}\o\big)\cdot J\nabla H( x(t,\o),t,\o)\\
&=(\pmb\nabla f)\big(\th_{x(t,\o)}\o\big)
\cdot J\pmb\nabla K( \th_{x(t,\o)}\o,t)\\
&=\pmb\{K(\cdot,t), f\pmb\}\big(\pmb \phi^K_t(\o)\big).
\end{align*}

\begin{example} In the quasi periodic setting, $\O=\bT^N
=[0,1]^N,\ 0=1$, $\bP$ is the Lebesgue measure, and 
$\th_x\o=\o+Ax \ (\mod 1)$. In this case, 
$$\big(\O,\pmb\{,\cdot,\cdot\pmb\}\big)$$ is a Poisson manifold, with
\[
\pmb\{f,g\pmb\}= \big(AJA^*\big)( \nabla f)\cdot (\nabla g).
\]
Given $K_0:\bT^N\x\bR\to\bR$, the corresponding Hamiltonian ODE vector field
is $\big(AJA^*\big)( \nabla K)(\o,t)$. We refer to the proof of Proposition~\ref{pro6.1}(vii) below for more details.
\end{example}

\section{Existence of stationary generating functions} \label{pre}

In this section we study the existence and regularity of generating functions
associated with $\th'$ stationary twist symplectic maps. Our results require
a Sobolev-type regularity of the symplectic
twist diffeomorphism $\Phi$ that depends on the choice of the stationary measure $\cP$ or $\bP$. The corresponding Sobolev spaces will be defined in the next definition. To ease the notation, we write
$n$ for $2d$.

\begin{definition}\la{def3.1}
\begin{itemize}
\item[(i)]
 Given a $\th$-invariant probability measure $\bP$ on a measure space $\O$,
we define a group of unitary operators
\[
\cT_x:{\rm L}^2(\bP)\to {\rm L}^2(\bP),\ \ \ \ \cT_x f(\o)=f(\th_x\o).
\]
The inner product of the corresponding $\rm L^2(\bP)$
is denoted by $\langle\cdot,\cdot\rangle$. We also write $\bE$ for the expected value with respect to $\bP$: 
$$\bE \ f=\int_\O f\ {\rm d} \bP.$$
The {\em infinitesimal generator} of the group 
$\cT$ is denoted by $\pmb{\nabla}$,
\begin{eqnarray}
\pmb{\nabla}_j f(\o)&=&\lim_{h\to 0}h^{-1}\big(f(\th_{h{\rm e}_j}\o)-f(\o)
\big), \nonumber \\
 \pmb{\nabla}&=&\big(\pmb{\p}_1,\dots,\pmb{\p}_{n}\big), \nonumber
\end{eqnarray}
where $\{{\rm e}_1,\dots,{\rm e}_{n}\}$ is the standard basis of $\bR^{n}$, and
the convergence is with respect to the $\rm L^2(\bP)$ norm.
When  
\[
f=(f_1,\dots,f_n):\O\to\bR^n,
\]
is vector valued, we write $\pmb D f(\o)$ for a matrix whose $j$-th row is $\pmb{\nabla} f_j$.

We write $\mathfrak{H}^1=\mathfrak{H}^1(\bP)$ 
for the domain of $\pmb\nabla$.
Note that when $f\in\mathfrak{H}^1$, then the function
$a\mapsto f(\th_a\o)$ is  differentiable in ${\rm L}^2_{{\rm loc}}(\bR^{n})$.
By Stone's theorem (see for example \cite{La02}), there is a projection-valued measure $\boE({\rm d}\xi)$ such that 
\begin{eqnarray}
\cT_x&=&\int_{\bR^{n}} {\rm e}^{{\rm i}x\cdot \xi}\ \boE({\rm d}\xi), \nonumber \\
\pmb{\nabla}&=&\rm i\int_{\bR^{n}} \xi\ \boE({\rm d}\xi). \nonumber
\end{eqnarray}
 We also write $\mathfrak{H}^{-1}=\mathfrak{H}^{-1}(\bP)$ 
for the domain of the definition
of the operator 
\[
\pmb\nabla^{-1}=-{\rm i}\int_{\bR^{n}} \xi^{-1}\ \boE({\rm d}\xi),
\]
where $\xi^{-1}=(\xi_1^{-1},\dots,\xi_{n}^{-1})$.
If 
$${\rm Z}_f({\rm d}\xi):=\boE({\rm d}\xi)f,\ \ \ \ {\rm G}_f({\rm d}\xi):=
\langle \boE({\rm d}\xi)f,f\rangle,$$
then 
\begin{align}\la{eq3.1}
 f(x,\o):=&f(\th_x\o)=\int_{\bR^{n}} {\rm e}^{{\rm i} x\cdot\xi}\ {\rm Z}_f({\rm d}\xi,\o),\\
{\rm R}_f(x):=&\langle\cT_x f, f\rangle=\int_{\bR^{n}} {\rm e}^{{\rm i}x\cdot\xi}\ {\rm G}_f({\rm d}\xi).\la{eq3.2}
\end{align} 
 From
\[
(- \D_x )^{\pm1} f(x,\o)=
\int_{\bR^{n}} |\xi|^{\pm 2}\  {\rm e}^{{\rm i} x\cdot\xi}\ {\rm Z}_f({\rm d}\xi,\o),
\]
we learn that $f\in \mathfrak{H}^{\pm1}$ if and only if
\[
\int_{\bR^{n}} |\xi|^{\pm 2}\ {\rm G}_f({\rm d}\xi)<\i.
\]
In particular, from
\[
(- \D_x )^{-1} R_f(x)=
\int_{\bR^{n}} |\xi|^{- 2}\  {\rm e}^{{\rm i} x\cdot\xi}\ {\rm G}_f({\rm d}\xi),
\] 
we deduce,
\be\la{eq3.3}
\int_{\bR^{n}} |\xi|^{-2}\ {\rm G}_f({\rm d}\xi)  = (-\D)^{-1}{\rm R}_f(0) =  \int_{\bR^{n}} L(x) {\rm R}_f(x)\ {\rm d}x, 
\ee
where $L:\bR^n\to\bR$ is given by:
\be\la{eq3.4}
L(x)=\begin{cases}(n\a_{n})^{-1}|x|^{2-n}&\ \ \ \ n>2,\\
-(2\pi)^{-1}\log|x|&\ \ \ \ n=2,
\end{cases}
\ee
where $\a_n$ is the $(n-1)$-dimensional surface area of
 the unit sphere $\mathbb{S}^{n-1}$.

\item[(ii)]
We write $\widehat{\mathfrak{H}}^{-1}=
\widehat{\mathfrak{H}}^{-1}(\bP)$ for the set of
$f\in {\rm L}^2(\bP)$ such that 
\[
\int_{|x|\geq 1} |L(x) {\rm R}_f(x)|\ {\rm d}x<\i.
\]
Equivalently,
\[
\int_{\bR^{2d}} |L(x) {\rm R}_f(x)|\ {\rm d}x<\i.
\]
because $|{\rm R}_f|\leq \|f\|_{\rm L^2(\bP)}$, and 
the function $L$ is integrable near $0$. As a consequence,
\[
\widehat{\mathfrak{H}}^{-1}\subseteq 
\mathfrak{H}^{-1}.
\]
\end{itemize}
\end{definition}

\begin{remark}\la{rem3.2}
Assume that $f\in \rm L^2(\bP)$ and $$\int_{\O} f\ \rm d\bP=0,$$
and write $\cL(f)$ for the $\rm L^2(\bP)$-closure of the 
span of the set $\big\{\th_a f:\ a\in \bR^n\big\}$.
The spectral representation \eqref{eq3.1} can be used to 
define a $\rm L^2(\bP)$-isometry between $\cL(f)$ and $\rm L^2(G_f)$:
\[
\cI_f:\rm L^2(G_f)\to \rm L^2(\bP),
\]
so that if $\chi_{x}(\xi)=\rm e^{\rm ix\cdot\xi}$, then 
$$\cI\big(\chi_{x}\big)=\cT_x f,
$$ 
(see for example \cite[Section 5.4]{AT07}.)
To explain this, observe that for any bounded continuous function $\z:\bR^n\to\bR$, we can use \eqref{eq3.1} to write
\[
F_\z(\o):=\int_{\bR^n}\z(x) f(\th_x\o)\ {\rm d}x=\int_{\bR^n}
 \hat\z(\xi) \ Z(\rm d\xi,\o),
\] 
where
\[
\hat\z(\xi)=\int_{\bR^n} \z(x)e^{{\rm i} x\cdot\xi}\ {\rm d}x.
\]
From this, one can show
\[
\bE |F_\z|^2=\int_{\bR^n}\big|\hat\z(\xi) \big|^2\ G_f(\rm d\xi).
\]
Clearly, $\cI_f(\hat\z)=F_\z$. 
\end{remark}

\ms
We continue with some preparatory definitions regarding stationary functions and twist maps.

\begin{definition} \label{gdd}
\begin{itemize}
\item[(i)]
Let us write $\cH$ for the space of  ${\rm C}^2$ Hamiltonian functions
$H:\bR^{2d}\x\bR\to\bR$. For each $a=(b,c)\in\bR^{d}\x\bR^d$, we define
\begin{eqnarray}
(\tau_bH)(q,p,t)&=&H(q+b,p,t), \nonumber \\
(\eta_cH)(q,p,t)&=&H(q,p+c,t), \nonumber \\
(\th_aH)(q,p,t)&=&H(q+b,p+c,t). \nonumber
\end{eqnarray}

\item[(ii)] We write $\cC^1$ for the set of ${\rm C}^1$ maps $\Phi:
\bR^{2d}\to\bR^{2d}$.
We set $$\cF(\Phi)=\Phi-{\rm id},$$ where ${\rm id}$ denotes the identity map. 

We write $\cS$ for the set of symplectic diffeomorphisms $\Phi:\bR^{2d}\to\bR^{2d}$ such that $\cF(\Phi)$ is uniformly bounded. 
We also set $\tilde \cS=\cF\big(\cS\big)$.  

 For $a\in\bR^{2d}$, the translation operators $\th_a:\bR^{2d}\to\bR^{2d}$  and
$\th_a,\th'_a:\cC^1\to\cC^1$ are defined by
\begin{eqnarray}
\th_a (x)&=&x+a, \nonumber \\
\th_a\o&=&\o\circ \th_a, \nonumber \\
\th'_a&=&\cF^{-1}\circ \th_a\circ \cF, \nonumber
\end{eqnarray}
for $x\in\bR^{2d}$ and $\o\in\cC^1$. Note that for $\Phi\in\cC^1,$
\begin{eqnarray}
\big(\th'_a \Phi\big)(x)&=&(\th_{-a}\circ \Phi\circ \th_a)(x) \nonumber \\
&=&\Phi(x+a)-a. \nonumber
\end{eqnarray}

\item[(iii)] Let $\cP$ be a $\th'$-invariant probability measure on $\cS$.
The map $\cF$ pushes forward to a measure on $\tilde S$ that is denoted by 
$\cQ$.
This measure is $\th$ invariant.

\item[(iv)] We define $\pi:\tilde\cS\to\bR$ to be the evaluation map 
$\pi(\o)=\o(0)=\Phi(0)$.

\item[(v)] Let  $\Phi$ be a symplectic diffeomorphism with 
\[
\Phi(q,p)=\big(Q(q,p),P(q,p)\big). 
\]
We say that $\Phi$ is {\em twist} if for every $p\in\bR^d$, the map $q\mapsto Q(q,p)$ is a diffeomorphism
of $\bR^d$. We write $\hat q(Q,p)$ for the inverse:
\[
Q(q,p)=Q\ \ \ \ \Longleftrightarrow \ \ \ \ q=\hat q(Q,p).
\]
We also set $\hat P(Q,p)=P\big(\hat q(Q,p),p)$, and
\begin{eqnarray}
\widehat\Phi(Q,p)&=&\big(\hat q(Q,p),\hat P(Q,p)\big), \nonumber \\
\widetilde\Phi(Q,p)&=&\big(\hat P(Q,p),\hat q(Q,p)\big).  \nonumber
\end{eqnarray}
\end{itemize}
\end{definition}

We are now ready to state the main result of this section.

\begin{theorem} \la{th3.3}  
 Let $\cQ$ be a $\th$-invariant measure such that 
$\pi\in\widehat{\mathfrak{H}}^{-1}(\cQ)$, 
\be\la{eq3.5}
\int_{\cS} \Phi(0)\ \cP({\rm d}\Phi)=\int_{\tilde\cS} \o(0)\ \cQ({\rm d}\o)=0,
\ee
and
\be\la{eq3.6}
 \int_{\tilde\cS} 
\|{\rm D}\o\|_{\rm C^0}^{d}\ \cQ({\rm d}\o)<\i.
\ee
Assume that $\Phi={\rm id}+\o$ is ${\rm C}^2$ twist diffeomorphism
 $\cQ$-almost surely. Then there exists
a unique function $\hat w:\tilde\cS\to\bR$, with $\hat w\in \rm L^2(\cQ)$, and
\be\la{eq3.7}
\int_{\tilde\cS} \hat w(\o)\ \cQ({\rm d}\o)=0,
\ee
such that if 
\[
w(x,\o):=\hat w\big(\th_x \o\big),\ \ \ \ 
W(Q,p,\o)=Q\cdot p+w(Q,p,\o),
\]
then 
 $$\widehat\Phi=(W_p,W_Q)=:\widehat \nabla W,$$
$\cQ$-almost surely.
\end{theorem}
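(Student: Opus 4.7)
\emph{Reduction to a stationary Poisson equation.} The conclusion $\widehat{\Phi} = (W_p, W_Q)$ with $W(Q,p,\o) = Q\cdot p + w(Q,p,\o)$ is equivalent to $w_p = \hat{q} - Q$ and $w_Q = \hat{P} - p$. Under the stationary ansatz $w(x,\o) = \hat{w}(\th_x\o)$, evaluating the gradient at $x = 0$ turns these identities into
\be
\pmb{\nabla}\hat{w}(\o) = h(\o),\qquad h(\o) := \bigl(\hat{P}(0,0,\o),\ \hat{q}(0,0,\o)\bigr) \in \bR^{2d}, \nonumber
\ee
where the first $d$ components of $\pmb{\nabla}$ correspond to $Q$-directions and the last $d$ to $p$-directions. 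So the problem reduces to solving this Poisson-type equation in $\rm L^2(\cQ)$ with $\bE\hat{w} = 0$. Condition \eqref{eq3.6} and the twist hypothesis permit the implicit function theorem to be applied to $q\mapsto q + \o_1(q,0)$, giving $\hat{q}(0,0,\cdot)$ and hence $h$ well defined $\cQ$-almost surely; pointwise boundedness of $\o\in\tilde\cS$ then places $h$ in $\rm L^\infty(\cQ)^{2d}$.

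\emph{Integrability and spectral construction.} A direct computation with the twist inverse yields the $\th$-equivariance $\widehat{\th_a\o} = \th_a\widehat{\o}$; this identifies the stationary derivatives $\pmb{\nabla}_j h_k$ with the pointwise partial derivatives of $\widehat{\o}$ at the origin, reducing the integrability condition $\pmb{\nabla}_j h_k = \pmb{\nabla}_k h_j$ to the classical mixed-partial identities $\p_{p_j}\hat{P}_k = \p_{Q_k}\hat{q}_j$, $\p_{Q_j}\hat{P}_k = \p_{Q_k}\hat{P}_j$, and $\p_{p_j}\hat{q}_k = \p_{p_k}\hat{q}_j$ for the inverse of a symplectic twist map in the mixed coordinates $(Q,p)$, all of which are direct consequences of $\Phi^*(dp\wedge dq) = dp\wedge dq$. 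In the spectral language of Definition~\ref{def3.1}, these read $\xi_j {\rm Z}_{h_k}(d\xi) = \xi_k {\rm Z}_{h_j}(d\xi)$, which uniquely determines a random spectral measure ${\rm Z}_{\hat{w}}$ on $\bR^{2d}\setminus\{0\}$ via ${\rm Z}_{h_j}(d\xi) = {\rm i}\,\xi_j {\rm Z}_{\hat{w}}(d\xi)$. The candidate is then
\be
\hat{w}(\o) := \int_{\bR^{2d}\setminus\{0\}} {\rm Z}_{\hat{w}}(d\xi,\o), \nonumber
\ee
which is automatically of mean zero since ${\rm Z}_{\hat w}$ does not charge $\{0\}$.

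\emph{Main obstacle: the $\rm L^2$ bound, and uniqueness.} The substantive step---what I expect to be the hardest---is showing that $\hat{w}\in\rm L^2(\cQ)$. Summing the spectral identities gives
\be
\|\hat{w}\|_{\rm L^2(\cQ)}^2 = \int_{\bR^{2d}}|\xi|^{-2}\sum_{j=1}^{2d}{\rm G}_{h_j}(d\xi) = \sum_{j=1}^{2d}\int_{\bR^{2d}} L(x)\,{\rm R}_{h_j}(x)\,dx, \nonumber
\ee
where the second equality uses \eqref{eq3.3}. Finiteness is exactly the statement $h_j\in\widehat{\mathfrak{H}}^{-1}(\cQ)$ for all $j$, and this is where the hypothesis $\pi\in\widehat{\mathfrak{H}}^{-1}(\cQ)$ must be unpacked. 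Since $h(\o) = \widehat\o(0) = \pi(\widehat\o)$ and inversion is $\th$-equivariant, ${\rm R}_{h_j}$ equals the covariance of $\pi_j$ under the push-forward measure $\widehat{\cQ}$, and comparing it to ${\rm R}_{\pi_j}$ under $\cQ$ requires a change of variables $(Q,p)\leftrightarrow(q,p)$ whose Jacobian is controlled in $\rm L^1$ by \eqref{eq3.6} (through $\bE\|{\rm D}\o\|_{\rm C^0}^d < \infty$). Carrying out this comparison carefully, together with local integrability of $L$ from \eqref{eq3.4}, delivers $h_j\in\widehat{\mathfrak{H}}^{-1}(\cQ)$ and the desired $\rm L^2$ bound. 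Uniqueness is then immediate: two admissible $\hat w$ differ by an element of $\ker\pmb{\nabla}$, i.e.\ a $\th$-invariant $\rm L^2$ function; by ergodicity of the ambient framework this is a constant, pinned to zero by \eqref{eq3.7}.
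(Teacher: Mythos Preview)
Your spectral-construction outline matches the paper's Proposition~3.2, and your change-of-variables argument for transferring the $\widehat{\mathfrak H}^{-1}$ condition from $\pi$ to $h$ is exactly Proposition~3.1(ii). So the skeleton is right. But there is a genuine gap, and it is precisely the part of the proof that the paper spends the most effort on.

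Your spectral construction produces a $\hat w$ with $\pmb\nabla\hat w = h - \bE h$, not $\pmb\nabla\hat w = h$: the relation ${\rm Z}_{h_j}(d\xi)={\rm i}\xi_j{\rm Z}_{\hat w}(d\xi)$ says nothing about $\xi=0$, and if $\bE h\neq 0$ the spectral measure of $h$ carries a point mass there which your $\hat w$ cannot reproduce. Your sentence ``automatically of mean zero since ${\rm Z}_{\hat w}$ does not charge $\{0\}$'' establishes $\bE\hat w=0$, which is not the issue; what you need is $\bE h=0$, i.e.\ $\int\widehat\Phi(0)\,\cQ(d\o)=0$. You never verify this, and indeed you never use hypothesis~\eqref{eq3.5} anywhere --- a clear signal that something is missing. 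The passage from $\bE[\Phi(0)]=0$ to $\bE[\widehat\Phi(0)]=0$ is not formal: the map $\Phi\mapsto\widehat\Phi$ is nonlinear, and stationarity alone does not force the mean to survive.

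The paper handles this as follows. Using~\eqref{eq3.5} and a change of variables, one obtains an identity of the form
\[
(-b,c)=\frac{1}{|{\rm I}(\ell)|}\int\!\!\!\int_{{\rm I}'(\ell)} J\nabla w\;\det\bigl({\rm I}_d+w_{Qp}\bigr)\,dQ\,dp\,\cQ(d\o),
\]
where $(b,c)=\bE[\widehat\Phi(0)]$. One then expands $\det({\rm I}_d+w_{Qp})$ into minors, and for each resulting term performs careful integrations by parts in the box ${\rm I}(\ell)$, rewriting interior contributions as Jacobian determinants of maps with bounded range (so they are $O(\ell^{2d-k})$ for some $k\ge 1$) and controlling boundary contributions by $\int_{\partial {\rm I}(\ell)}|w|\,d\sigma$. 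The boundary terms are not obviously small after dividing by $|{\rm I}(\ell)|$; this is where Proposition~3.3 (a maximal-ergodic argument producing a sequence of boxes along which boundary averages of $|w|$ stay bounded) is essential. Sending $\ell\to\infty$ along that sequence yields $(b,c)=0$. This is the substantive analytic content of the theorem, and your proposal omits it entirely. What you identified as ``the hardest step'' (the $L^2$ bound) is in fact the routine part.
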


\begin{remark}
Note that $\widehat \nabla =\big(\p_p,\p_Q\big)$ 
represents the gradient operator with $\p_p$ and $\p_Q$ swapped.
We refer to $\hat w$ of Theorem~\ref{th3.3} as a {\em stationary generating function}. According to this theorem,
a $\th'$ stationary symplectic twist map always possesses a stationary generating function. A natural question is whether the converse is true.
 Given a function $\hat w$ such that the corresponding stationary
process 
\[
w(x)=w(x,\o):=\hat w\big(\th_x \o\big),
\]
 is ${\rm C}^2$, can
we use this function to produce a symplectic $\th'$-stationary twist map
$\Phi$?  This is equivalent to the condition that
\[
Q\mapsto W_p(Q,p)=Q+w_p(Q,p),
\]
is a diffeomorphism for each $p$, so that we can solve
the equation $$W_p(Q,p)=q$$ for $Q= Q(q,p)$. This is always possible if the ${\rm C}^2$ norm of $w$ is small 
(see Proposition~\ref{pro3.1}(iv)). Moreover, when $d=1$, we need 
\begin{align*}
&W_{pQ}=1+w_{pQ}>0,\ \ \ \ W_p(\pm \i,p)=\pm\i,\ \ \ {\text{or}}\\
&W_{pQ}=1+w_{pQ}<0,\ \ \ \ W_p(\pm \i,p)=\mp\i.
\end{align*}
The latter condition can be guaranteed by assuming that $w_p$ is a bounded function.
\end{remark}

\ms
With the previous definitions in mind, we state and prove  three preparatory
propositions.
\begin{proposition} \label{pro3.1}
The following statements hold.
\begin{itemize}
\item[(i)] For every symplectic twist diffeomorphism 
$\Phi:\bR^{2d}\to\bR^{2d}$,  and $a\in\bR^d$, we have 
\[
\widehat{\th'_a\Phi}=\th'_a\widehat{\Phi}.
\]

\item[(ii)] If $\pi\in \widehat{\mathfrak{H}}^{-1}(\cQ)$, 
and 
\[
\int_{\cS} \|{\rm D}\Phi\|_{\rm C^0}^d\ \cP(\rm d\Phi)=
\int_{\tilde\cS}\|{\rm D}\o\|_{\rm C^0}^d\ \cQ(\rm d\o)<\i,
\]
 then 
$$\hat\pi\in \widehat{\mathfrak{H}}^{-1}(\cQ),$$
where $\hat\pi(\o):=\widehat\Phi(0)$.

\item[(iii)] Assume that $\Phi\in\cS$ is symplectic twist diffeomorphism. Then there exists a ${\rm C}^2$ function 
$$W \colon \bR^{2d}\to\bR$$ such that $\widetilde\Phi=\nabla W$.
Moreover, for $w$ defined by $$w(Q,p):=W(Q,p)-Q\cdot p,$$
we have 
\begin{eqnarray}  \label{eq3.8}
\|\nabla w\|_{\rm C^0}\leq \|\cF(\Phi)\|_{\rm C^0}.
\end{eqnarray}

\item[(iv)] Let $w$ be a ${\rm C}^2$ function with $\|{\rm D}^2 w\|<1$, and set

\be\la{eq3.9}
W(Q,p)=Q\cdot p+w(Q,p).
\ee
 Then there exists a symplectic twist diffeomorphism $\Phi$ such that 
\[
\Phi\big(W_p(Q,p), p\big)=\big(Q,W_Q(Q,p)\big).
\]
\end{itemize}

\end{proposition}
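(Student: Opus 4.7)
I would address the four parts in the order (i), (iii), (iv), (ii). Parts (i), (iii), (iv) are direct extensions of the classical symplectic-twist calculus with the probabilistic relabeling, while (ii) is the genuinely stochastic estimate and depends on the preceding three.

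For (i), I would unwind the definitions. Writing $a = (b, c) \in \bR^d \x \bR^d$ and $\th'_a \Phi = (\tilde Q, \tilde P)$, the equation $\tilde Q(q, p) = Q$ becomes $Q(q+b, p+c) = Q + b$, whose twist inverse is $q = \hat q(Q+b, p+c) - b$. Substituting into the second component of $\th'_a \Phi$ gives $\widehat{\th'_a \Phi}(Q, p) = \big(\hat q(Q+b, p+c) - b,\ \hat P(Q+b, p+c) - c\big)$, which matches $\th'_a \widehat\Phi(Q, p) = \widehat\Phi((Q, p) + a) - a$ by Definition~\ref{gdd}(ii).

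For (iii), I would invoke the Poincar\'e lemma. The twist hypothesis makes $(Q, p)$ global coordinates on $\bR^{2d}$ via the diffeomorphism $\Psi(q, p) = (Q(q, p), p)$. Symplecticity of $\Phi$ reads $dQ \wedge dP = dq \wedge dp$, which rearranges to $d(\hat P\,dQ + \hat q\,dp) = 0$; since $\bR^{2d}$ is simply connected, there is $W \in {\rm C}^2$ with $W_Q = \hat P$ and $W_p = \hat q$, so $\widetilde\Phi = \nabla W$. Writing $W = Q \cdot p + w$ and $\o = (\o_1, \o_2) = \cF(\Phi)$, the identities $w_p = \hat q - Q = -\o_1(\hat q, p)$ and $w_Q = \hat P - p = \o_2(\hat q, p)$ yield \eqref{eq3.8}.

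For (iv), the hypothesis $\|{\rm D}^2 w\| < 1$ gives in particular $\|w_{pQ}\| < 1$, so $\psi_p(Q) := Q + w_p(Q, p)$ has derivative $I + w_{pQ}$ at operator-norm distance strictly less than $1$ from $I$. Integrating this bound along straight segments yields $|\psi_p(Q) - \psi_p(Q')| \geq (1 - \|w_{pQ}\|_{{\rm C}^0})|Q - Q'|$, so $\psi_p$ is globally coercive, and Hadamard's theorem upgrades it to a ${\rm C}^1$ diffeomorphism of $\bR^d$. Setting $Q(q, p) := \psi_p^{-1}(q)$ and $P(q, p) := W_Q(Q(q, p), p)$ defines a twist map $\Phi(q, p) := (Q, P)$; the relation $dW = P\,dQ + q\,dp$ then forces $dq \wedge dp - dQ \wedge dP = d(q\,dp - P\,dQ) = 0$, so $\Phi$ is symplectic.

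The main obstacle is (ii). By (i), $\hat\pi(\th_x \o) = \widehat\Phi(x) - x$, and by (iii) this vector has components $-\o_1(\hat q, p)$ and $\o_2(\hat q, p)$ evaluated at $x = (q_x, p_x)$ through the $\o$-dependent chart $\Psi^{-1}$. To bound $\int |L(x)|\,|R_{\hat\pi}(x)|\,dx$ I would perform the change of variables $y = \Psi^{-1}(x)$ in the spatial integral, converting $R_{\hat\pi}(x)$ into a correlation of $\pi$-values at translates by $y$ weighted by the Jacobian $|\det {\rm D}\Psi(y)| = |\det(I + (\o_1)_q(y))|$, which is a polynomial of degree $d$ in the entries of ${\rm D}\o$ and hence dominated by $C(1 + \|{\rm D}\o\|_{{\rm C}^0}^d)$. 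The moment hypothesis \eqref{eq3.6} is precisely calibrated so that this Jacobian is $\cQ$-integrable, and Fubini combined with the $\th$-invariance of $\cQ$ then reduces the finiteness of $\int |L(x)|\,|R_{\hat\pi}(x)|\,dx$ to the assumed finiteness of $\int |L(x)|\,|R_\pi(x)|\,dx$. The technically delicate step is interchanging the spatial integral, the expectation under $\cQ$, and the random change of variables; the moment bound \eqref{eq3.6} is exactly what makes this interchange permissible.
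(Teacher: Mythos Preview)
Your treatments of (i), (iii), (iv) are correct and essentially identical to the paper's. For (ii) your overall strategy --- pass to $\bE\int |L(x)|\,|(\widehat\Phi(x)-x)\cdot\widehat\Phi(0)|\,{\rm d}x$, exploit the pointwise identity $(\widehat\Phi(X')-X')\cdot(\widehat\Phi(y^0)-y^0)=(\Phi(x)-x)\cdot(\Phi(x^0)-x^0)$ under $X'=\Psi(x)$, change variables, and bound the Jacobian $|\det Q_q|$ by a degree-$d$ polynomial in $\|{\rm D}\o\|$ --- is exactly the paper's. But your sketch has a gap, and you have misidentified where the work lies.

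After the random change of variables $x\mapsto y=\Psi^{-1}(x)$ the kernel becomes $L(\Psi(y))$, not $L(y)$; stationarity of $\cQ$ does nothing to repair this, since $L$ is deterministic and $\Psi$ is random. What makes the argument go through is the comparison of $L(X')$ with $L(x)$ using $|X'-x|\leq c_0:=\|\cF(\Phi)\|_{{\rm C}^0}$: for $|x|\geq 2c_0$ one has (when $d>1$) $\big||X'|^{-(n-2)}-|x|^{-(n-2)}\big|\leq C\,c_0\,|x|^{-(n-1)}$, and the resulting error term is controlled by the original $\pi$-hypothesis since $|x|^{-(n-1)}\leq |x|^{-(n-2)}$ on that region. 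This kernel comparison, together with a separate treatment of the bounded region $\{1\leq |X'|\leq 3c_0\}$ via the trivial bound $|\widehat\Phi(0)|^2\in L^1(\cQ)$, is the substance of the proof; the Fubini interchange is routine once these estimates are in place. The moment bound on $\|{\rm D}\o\|^d$ enters exactly where you say, through the Jacobian, but it does not by itself ``make the interchange permissible'' --- you still need the boundedness of $\cF(\Phi)$ to compare kernels.
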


\begin{proof}
{(i)} Let us write
\begin{eqnarray}
\Phi'(q,p)&:=&(\th'_a\Phi)(q,p)=\big(Q'(q,p),P'(q,p)\big), \nonumber \\
  \widehat\Phi'(Q,p)&=&\big(\hat q'(Q,p),\hat P'(Q,p)\big). \nonumber
\end{eqnarray}
This  implies
\begin{align*}
&Q'(q,p)=Q(q+b,p+c)-b=Q\ \ \ \ \Longleftrightarrow \ \ \ \
\hat q'(Q,p)=q,\\
&Q(q+b,p+c)=Q+b\ \ \ \ \ \ \ \ \ \ \ \ \ \ \ \ \ \ \Longleftrightarrow \ \ \ \
\hat q(Q+b,p+c)=q+b.
\end{align*}
Hence 
\[
\hat q'(Q,p)=\hat q(Q+b,p+c)-b.
\]
On the other hand
\begin{eqnarray}
\hat P'(Q,p)&=&P'\big(\hat q'(Q,p),p\big) \nonumber \\
&=& P\big(\hat q'(Q,p)+b,p+c\big)-c \nonumber \\
&=& P\big(\hat q(Q+b,p+c),p+c\big)-c \nonumber \\
&=& \hat P\big(Q+b,p+c\big)-c, \nonumber
\end{eqnarray}
as desired.

\ms\noi
{(ii)} To ease the notation, we write $\bE$ for the $\rm d\cQ$ integration.
Fix some $x^0=(q^0,p^0)\in\bR^{2d}$.
Our claim reads as follows: If
\be\la{eq3.10}
\bE\int_{|x-x^0|\geq 1}L(x-x^0)\big|(\Phi(x)-x)\cdot (\Phi(x^0)-x^0)\big|\ {\rm d}x<\i,
\ee
then
\be\la{eq3.11}
\bE\int_{|x-x^0|\geq 1}L(x-x^0)\big|(\widehat\Phi(x)-x)\cdot 
(\widehat\Phi(x^0)-x^0)\big|\ {\rm d}x<\i.
\ee
By stationarity, neither the statement \eqref{eq3.10} nor the statement
\eqref{eq3.11} depend on the choice of the point $x^0$.
To ease the notation, we write 
\begin{eqnarray}
X&=&(Q,P)=\Phi(x)=\Phi(q,p), \nonumber \\
X'&=&(Q,p). \nonumber
\end{eqnarray}
Write $y^0=(Q^0,p^0)$, where $Q^0=Q(x^0)$. Since 
\[
(\Phi(x)-x)\cdot (\Phi(x^0)-x^0)=\left(\widehat \Phi(X')-X'\right)\cdot 
\left(\widehat \Phi(y^0 )-y^0\right),
\]
the statement~\eqref{eq3.10} can be rewritten as
\be\la{eq3.12}
\bE\int_{|x-x^0|\geq 1}L(x-x^0)\big|(\widehat\Phi(X')-X')\cdot 
(\widehat\Phi(y^0)-y^0)\big|\ {\rm d}q \ {\rm d}p<\i.
\ee
Assuming this, we wish to show
\be\la{eq3.13}
\bE\int_{|X'-y^0|\geq 1}L(X'-y^0)\big|(\widehat\Phi(X')-X')\cdot 
(\widehat\Phi(y^0)-y^0)\big|\ {\rm d}Q \ {\rm d}p<\i.
\ee
Since the law of 
both $$x\mapsto \Phi(x)-x,$$ and $$X'\mapsto\widehat \Phi(X')-X'$$
are $\th$-invariant, we can choose $x^0=0$ in \eqref{eq3.12}, and
$y^0=0$ in \eqref{eq3.13}.

We first assume that $d>1$. 
 Observe that if
$c_0\geq \sup|\cF(\Phi)|$, then $$|X'-x|\leq c_0,$$ and 
\begin{align}\nonumber
|x|\geq 2c_0\ \ \ \ &\implies\ \ \   \frac 12 |x|\leq  |X'|\leq \frac 32 |x|,\\
|X'|\geq 3c_0\ \ \ \ &\implies\ \ \   \frac 23 |X'|\leq  |x|\leq \frac 43 |X'|.\la{eq3.14}
\end{align}
This in turn implies under the assumption  $|x|\geq 2c_0$,
\begin{align*}
   \big||X'|^{-r}- |x|^{-r}\big|&=|X'|^{-r} |x|^{-r}
\big||X'|^{r}- |x|^{r}\big|\\
&=(|X'||x|)^{-r}
\big||X'|- |x|\big|\sum_{j=0}^{r-1}|X'|^j|x|^{r-1-j}\\
&\leq c_1c_0 |x|^{-r-1},
\end{align*}
for a constant $c_1=c_1(r)$ that depends on $r=2d-2$ only.
We additionally require $c_0\geq 1/2$ so that $2c_0\geq 1$. Hence, for
\[
\L:=\int_{|x|\geq 2c_0} \big|(\widehat\Phi(X')-X')\cdot 
(\widehat\Phi(y^0)-y^0)\big|\ |X'|^{-r}\ {\rm d}x,
\]
we have
\begin{align*}
\L= &\int_{|x|\geq 2c_0} \big|(\widehat\Phi(X')-X')\cdot 
(\widehat\Phi(y^0)-y^0)\big|\ |X'|^{-r}\ {\rm d}x\\
\leq &\int_{|x|\geq 2c_0} \big|(\widehat\Phi(X')-X')\cdot 
(\widehat\Phi(y^0)-y^0)\big|\ |x|^{-r}\ {\rm d}x\\
&+c_1 c_0\int_{|x|\geq 2c_0} \big|(\widehat\Phi(X')-X')\cdot 
(\widehat\Phi(y^0)-y^0)\big|\ |x|^{-r-1}\ {\rm d}x\\
=&:\L_0+\L_1,
\end{align*}
where $X^0=\Phi(x^0)$. The assumption \eqref{eq3.12} implies then 
$$\bE(\L_0+\L_1)<\i.$$ Hence 
\be\la{eq3.15}
\bE\L<\i.
\ee 
We now make a change of variables to replace
$q$ with $\hat q(Q,p)$ in $\L$. Note
\begin{eqnarray}
&& {\rm d}Q\ {\rm d}p=\big|\det  Q_q(q,p)\big|\  {\rm d}q\ {\rm d}p, \nonumber \\
&& \big|\det Q_q(q,p)\big|\leq d! |{\rm D}\Phi|^d=:c_2. \la{eq3.16}
\end{eqnarray}
From this, \eqref{eq3.14}, and \eqref{eq3.15} we learn
\begin{eqnarray}
&&\bE \int_{|X'|\geq 3c_0} \big|(\widehat\Phi(X')-X')\cdot 
(\widehat\Phi(y^0)-y^0)\big|\ |X'|^{-r}\ {\rm d}Q \ {\rm d}p  \nonumber \\
&\leq & \bE\int_{|x|\geq 2c_0} \big|(\widehat\Phi(X')-X')\cdot 
(\widehat\Phi(y^0)-y^0)\big|\ |X'|^{-r}\ {\rm d}Q \ {\rm d}p  \nonumber \\
&\leq& c_2\int_{|x|\geq 2c_0} \big|(\widehat\Phi(X')-X')\cdot 
(\widehat\Phi(y^0)-y^0)\big|\ |X'|^{-r}\ {\rm d}q \ {\rm d}p \nonumber \\
&=&c_3\bE\L<\i. \nonumber
\end{eqnarray}
Because of this, the claim \eqref{eq3.13} (in the case $y_0=0$) 
would follow if we can show
\[
\bE\int_{1\le|X'|\leq 3c_0} \left|\left(\widehat \Phi(X')-X'\right)\cdot 
\left(\widehat \Phi(y^0 )-y^0\right)\right|\ |X'|^{-r}\ {\rm d}Q \ {\rm d}p<\i,
\]
Since the law of $\widehat\Phi(X')-X'$ is $\th$-stationary, we can bound the left-hand side by
\[
 \bE\big|\widehat \Phi(0)\big|^2\ 
\int_{1\le|X'|\leq 3c_0} \ |X'|^{-r}\ {\rm d}Q \ {\rm d}p=:c_3  \bE\big|\widehat \Phi(0)\big|^2.
\]
It remains to verify
\be\la{eq3.17}
 \bE\big|\widehat \Phi(0)\big|^2<\i.
\ee
Indeed from $|X'-x|\leq  c_0$, \eqref{eq3.15}, and the stationarity, we deduce
\begin{align*}
\bE\big|\widehat \Phi(0)\big|^2=&
\frac 1{|{\rm B}_1(0)|}\ \bE\int_{{\rm B}_1(0)}\big|\widehat \Phi(X')-X'\big|^2 \ {\rm d} Q \ {\rm d}p\\
\leq & c_2
\frac 1{|{\rm B}_1(0)|}\ \bE\int_{{\rm B}_{1+c_0}(0)}\big|\widehat \Phi(X')-X'\big|^2 \ {\rm d}q \ {\rm d}p
\\ = & c_2
\frac 1{|{\rm B}_1(0)|}\ \bE\int_{{\rm B}_{1+c_0}(0)}\big| \Phi(x)-x\big|^2 \ {\rm d}q \ {\rm d}p\\
= & c_2
\frac {|{\rm B}_{1+c_0}(0)|}{|{\rm B}_1(0)|}\ \bE| \Phi(0)\big|^2 <\i,
\end{align*}
where we used 
\[
\big|\widehat \Phi(X')-X'\big|^2 =\big| \Phi(x)-x\big|^2.
\]
for the second equality. The proof is complete when $d>1$.

The proof in the case $d=1$ is similar. Observe that from
\begin{eqnarray}
\big|\log|X'|-\log|x|\big|&=&\left|\int_{|x|}^{|X'|}\frac {{\rm d}r}r\right| \nonumber \\
&\leq& \frac{|X'-x|}{|X'|\wedge |x|}, \nonumber
\end{eqnarray}
and $|X'-x|\leq  c_0$, we deduce
\begin{eqnarray}
\max\{|x|,|X'|\}\geq 2c_0 &\implies& \min\{|x|,|X'|\}\geq c_0 \nonumber \\
&\implies& \big|\log|X'|-\log|x|\big|\leq 1. \nonumber
\end{eqnarray}
This would allow us to repeat our proof for the case $d>1$ and finish the proof.

\ms\noi
{(iii)}
Since $\Phi$ is symplectic, we have 
\begin{eqnarray}
0&=&{\rm d}\big( P\cdot {\rm d}Q-p\cdot  {\rm d}q) \nonumber \\
&=&{\rm d}\big(\hat P\cdot {\rm d}Q-p\cdot {\rm d}\hat q) \nonumber \\
&=&{\rm d}\big(\hat P\cdot {\rm d}Q+\hat q\cdot {\rm d}p). \nonumber
\end{eqnarray}
Hence, there exists a function $W=W(Q,p)$ such that 
\[
{\rm d}W=\hat P\cdot {\rm d}Q+\hat q\cdot {\rm d}p.
\]
As a result, $$\nabla W=\widetilde \Phi.$$

 The inequality \eqref{eq3.8} is an immediate consequence of 
\begin{eqnarray}
\cF(\Phi)(q,p)&=&(Q-q,P-p) \nonumber \\
&=& (Q-W_p(Q,p),W_Q(Q,p)-p) \nonumber \\
&=&(-w_p(Q,p),w_Q(Q,p)). \nonumber
\end{eqnarray}

\ms\noi
{(iv)}
 If we define
\begin{eqnarray}
\widehat \Phi(Q,p)&=&\big(W_p(Q,p),W_Q(Q,p)\big) \nonumber \\
&=&(Q,p)+\big(w_p(Q,p),w_Q(Q,p)\big),  \nonumber
\end{eqnarray}
then $\widehat \Phi$ is a ${\rm C}^1$ diffeomorphism by our assumption
on $w$. In particular, the equation
$W_p(Q,p)=q$ ,
can be solved implicitly for $Q=Q(q,p)$.
We may define $$P(q,p)=W_Q\big(Q(q,p),p\big),$$ and 
$$\Phi(q,p)=\big(Q(q,p),P(q,p)\big),$$
which concludes the proof.
\end{proof}

\ms
As we have learned from Proposition~\ref{pro3.1} (iii), a symplectic twist diffeomorphism 
always has a generating function. What  
Theorem~\ref{th3.3} claims is the existence of 
a {\em stationary} generating function. Note that if we set
\be\la{eq3.19}
B(Q,p)=B(Q,q,\o):=\widehat\Phi(Q,p)-(Q,p),
\ee
then $B$ is a $\th$-stationary by Proposition~\ref{pro3.1}(i).
By Proposition~\ref{pro3.1}(iii), we can express $B$ as $\widehat \nabla w$
for a function $w(Q,p)=w(Q,p,\o)$. We wish to show that this $w$  
can be chosen to be a stationary process with respect to $\th$. 
In the next Proposition, 
we state a sufficient condition (which is also necessary) for the existence of such stationary generating function.

\begin{proposition}  \la{pro3.2}  
Let $\cQ$ be a $\th$-invariant probability measure on
$\O$, and let 
$$\hat B:\O\to\bR^{2d}$$ be a function 
with the following properties:
\begin{itemize}
\item[(i)] $\hat B\in {\mathfrak{H}}^{-1}(\cQ)$, and 
\be\la{eq3.20}
\int_\O \hat B(\o)\  \cQ({\rm d}\o)=0.
\ee
\item[(ii)]  There exists a ${\rm C}^2$ function 
$v(x,\o)$ such that 
\[
B(x,\o):=\hat B(\th_x\o)=\widehat\nabla v(x,\o).
\]
\end{itemize}
Then there exists a unique $\hat w\in \rm L^2(\cQ)$ such that 
\[
\int_\O \hat w\ \rm d\cQ=0,
\]
and if $w(x,\o):=\hat w(\th_x \o),$ then
 $\widehat \nabla w=B,$ $\cQ$-almost surely.
\end{proposition}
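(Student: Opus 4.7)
The plan is to invert the operator $\widehat\nabla$ spectrally on $\rm L^2(\cQ)$. Using Definition~\ref{def3.1}, write $\cT_x=\int{\rm e}^{{\rm i}x\cdot\xi}\,\boE({\rm d}\xi)$ and $Z_f({\rm d}\xi):=\boE({\rm d}\xi)f$, and let $\hat\xi$ denote the permutation of $\xi$ that swaps the first $d$ and last $d$ coordinates, so that the swapped gradient on $\rm L^2(\cQ)$ is represented by $\hat{\pmb\nabla}={\rm i}\int\hat\xi\,\boE({\rm d}\xi)$. The assumption that $B=\widehat\nabla v$ for a $\rm C^2$ potential $v$ gives the classical curl-free identity $\hat\p_jB_k=\hat\p_kB_j$ pointwise in $(x,\o)$; pairing this against translates $\hat B_\ell(\th_y\cdot)$, taking expectations, and Fourier transforming in the $x$-variable yields the spectral compatibility
\begin{equation*}
\hat\xi_j\,Z_{\hat B_k}({\rm d}\xi)=\hat\xi_k\,Z_{\hat B_j}({\rm d}\xi)\qquad (1\le j,k\le 2d),
\end{equation*}
understood as equality of $\rm L^2(\cQ)$-valued measures.

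With this compatibility at hand, the candidate is
\begin{equation*}
\hat w:=-{\rm i}\sum_{j=1}^{2d}\int_{\xi\neq 0}\frac{\hat\xi_j}{|\xi|^2}\,Z_{\hat B_j}({\rm d}\xi).
\end{equation*}
The mean-zero hypothesis $\int\hat B\,{\rm d}\cQ=0$ kills the atom of $Z_{\hat B}$ at the origin, so restricting to $\xi\neq 0$ is harmless and automatically forces $\bE[\hat w]=0$. By the spectral isometry, the $\rm L^2(\cQ)$-norm of each summand is bounded by $\int|\xi|^{-2}\,G_{\hat B_j}({\rm d}\xi)$, and finiteness of these integrals is precisely the hypothesis $\hat B\in\mathfrak{H}^{-1}(\cQ)$; hence $\hat w\in \rm L^2(\cQ)$. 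Applying $\hat{\pmb\p}_k={\rm i}\int\hat\xi_k\,\boE({\rm d}\xi)$ componentwise and using the compatibility together with $|\hat\xi|^2=|\xi|^2$ produces
\begin{equation*}
\hat{\pmb\p}_k\hat w=\sum_j\int\frac{\hat\xi_k\hat\xi_j}{|\xi|^2}\,Z_{\hat B_j}({\rm d}\xi)=\int\frac{|\hat\xi|^2}{|\xi|^2}\,Z_{\hat B_k}({\rm d}\xi)=\hat B_k,
\end{equation*}
and therefore the stationary process $w(x,\o):=\hat w(\th_x\o)$ satisfies $\widehat\nabla w(x,\o)=(\hat{\pmb\nabla}\hat w)(\th_x\o)=\hat B(\th_x\o)=B(x,\o)$ as required.

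For uniqueness, if $\hat w_1$ and $\hat w_2$ are two candidates then $\d:=\hat w_1-\hat w_2$ satisfies $\d(\th_x\o)=\d(\o)$ for every $x$ and almost every $\o$, so $\d$ lies in the range of $\boE(\{0\})$; combined with the mean-zero normalization (and ergodicity of $\cQ$, which is in force whenever this proposition is applied), this forces $\d=0$ in $\rm L^2(\cQ)$. The main technical obstacle is establishing the spectral compatibility rigorously: the hypothesis $\hat B\in\mathfrak{H}^{-1}$ does not place $\hat B$ in the domain of $\pmb\nabla$, so the passage from the pointwise-in-$x$ curl-free identity to the identity of $\rm L^2$-valued spectral measures must be routed through the scalar covariance measures $\mu_{k\ell}({\rm d}\xi):=\langle\boE({\rm d}\xi)\hat B_k,\hat B_\ell\rangle$ and then lifted back, with the interchanges of limits and expectations justified by dominated-convergence arguments exploiting the $\rm C^2$ control on $v$.
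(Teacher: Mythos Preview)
Your proof is correct and follows essentially the same spectral-inversion strategy as the paper. The only cosmetic difference is in how the antiderivative is written down: the paper first observes from the compatibility $\xi_jZ'_k=\xi_kZ'_j$ that the scalar measure $z({\rm d}\xi,\o):=\xi_j^{-1}Z'_j({\rm d}\xi,\o)$ is independent of $j$ and then sets $\hat w=-{\rm i}\,z(\bR^{2d},\o)$, whereas you write the candidate directly as the Leray-type projection $\hat w=-{\rm i}\sum_j\int\hat\xi_j|\xi|^{-2}\,Z_{\hat B_j}({\rm d}\xi)$. These coincide via the compatibility relation and $|\hat\xi|^2=|\xi|^2$; your formulation has the mild advantage of never dividing by an individual coordinate $\xi_j$, while the paper's formulation makes the $\rm L^2$ bound slightly more transparent (it reduces to $\int|\xi_j|^{-2}\,G_B^{jj}({\rm d}\xi)<\infty$ for a single $j$). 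Your closing remark that the curl-free identity must be lifted to the spectral measures through the covariance is exactly the point the paper glosses over; both arguments also invoke ergodicity for uniqueness, which is not stated in the hypotheses of the proposition but is assumed throughout the paper.
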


\begin{proof} Since the process $B$ is stationary, 
by the Spectral Theorem \eqref{eq3.1}, 
we can find a vector measure $$Z({\rm d}\xi,\o)=(Z_j({\rm d}\xi,\o):\ j=1,\dots,2d)$$ such that 
\begin{eqnarray}
B(x,\o)&=& \int_{\bR^{2d}} {\rm e}^{{\rm i}x\cdot\xi}\ Z({\rm d}\xi,\o), \nonumber \\
 {\rm e}^{{\rm i}a\cdot\xi}Z({\rm d}\xi,\o) &=&
Z({\rm d}\xi,\th_a\o). \nonumber
\end{eqnarray}
Let us write $\eta:\bR^{2d}\to\bR^{2d}$ for the function
that swaps $Q$ with $p$: 
\[
\eta(Q,p)=(p,Q).
\]
 If we write
$B'$ for $\eta(B)$, and $Z'$ for $\eta(Z)$, then 
\[
B'(x,\o)= \int_{\bR^{2d}} {\rm e}^{{\rm i}x\cdot\xi}\ Z'({\rm d}\xi,\o).
\]
Since $B=\widehat\nabla v$, for some function $v$, we have
$B'=\nabla v$ is an exact derivative. This means
\[
{\rm D} B'(x,\o)={\rm i}\left[\int_{\bR^{2d}}  {\rm e}^{{\rm i}x\cdot\xi}\ \xi_jZ'_k({\rm d}\xi,\o)\right]_{ j,k=1}^{2d},
\]
is a symmetric matrix. As a result, 
$$\xi_jZ'_k({\rm d}\xi,\o)=\xi_kZ'_j({\rm d}\xi,\o),$$
which in turn implies that the scalar measure 
\[
z({\rm d}\xi,\o)=\xi_j^{-1}Z'_j({\rm d}\xi,\o),
\]
is independent of $j$. In summary,
\begin{eqnarray}
Z'({\rm d}\xi,\o) &=& \xi\ z({\rm d}\xi,\o), \nonumber \\
 {\rm e}^{{\rm i}a\cdot\xi}z({\rm d}\xi,\o)&=& z({\rm d}\xi,\th_a\o). \la{eq3.21}
\end{eqnarray}
Hence
\be\la{eq3.22}
Z({\rm d}\xi,\o) = \eta(\xi)\ z({\rm d}\xi,\o).
\ee
Our candidate for $\hat w$ is simply
\[
\hat w(\o):=-{\rm i}z\big(\bR^{2d},\o\big).
\]
We claim that our assumption $\hat B\in {\mathfrak{H}}^{-1}(\cQ)$
guarantees that $\hat w$ is well-defined and $\hat w\in \rm L^2(\cQ)$.
Once this is done, we can then use \eqref{eq3.21} to deduce
\be\la{eq3.23}
 w(x,\o):=w(\th_x\o)=-{\rm i}\int_{\bR^{2d}}  {\rm e}^{{\rm i}x\cdot\xi}\ z({\rm d}\xi,\o).
\ee

  Recall that if
\[
R(a):=\bE\ B(0,\o)\otimes \bar B(0,\th_a\o)
\]
represents the correlation of $B$,  then by \eqref{eq3.2},
\[
R(a)=\int_{\bR^{2d}}  {\rm e}^{{\rm i} \xi\cdot a}\ G_B({\rm d}\xi),\ \ {\text{where}}\ \ 
G_B({\rm d}\xi)=\bE\ Z({\rm d}\xi,\o)\otimes \bar Z({\rm d}\xi,\o).
\]
From this, and \eqref{eq3.22} we learn,
\begin{eqnarray}
G_B({\rm d}\xi)&=&\eta(\xi)\otimes \overline{\eta(\xi)} \ \bE \ |z|^2({\rm d}\xi,\o)  \nonumber \\
&=:& \eta(\xi)\otimes \overline{\eta(\xi)}\ g({\rm d}\xi)\nonumber \\
&=:&\big[G_B^{jk}(\rm d\xi)\big]_{j,k=1}^{2d}.\nonumber
\end{eqnarray}
Since $\hat B\in {\mathfrak{H}}^{-1}(\cQ)$, we have 
\[
\int_{\bR^{2d}}|\xi|^{-2}\ G_B(d\xi)<\i.
\]
This means that the map $\xi\mapsto\z_j(\xi):=\xi_j^{-1}$ is in $\rm L^2(G)$. 
We then use the isometry $\cI_B$ of Remark~\ref{rem3.2} to assert
that $\hat w=\cI_B(\z_j)$ is in $\rm L^2(\cQ)$. Moreover, if
\[
r(a):=\bE\ w(0,\o) \bar w(0,\th_a\o),
\]
represents the correlation of $ w$,
then
\[
 r(a)=\int_{\bR^{2d}}  {\rm e}^{{\rm i} \xi\cdot a}\ g({\rm d}\xi).
\]
In particular,
\[
\bE |\hat w|^2=r(0)=g(\bR^d) = \int_{\bR^{2d}}  |\xi_{j}|^{-2}\ G_B^{jj}({\rm d}\xi)<\i, 
\]
for any $j\in\{1,\dots,2d\}$. From differentiating \eqref{eq3.23}, 
we can readily deduce that $\widehat\nabla w=B$ weakly. Since $B$ is ${\rm C}^1$,
we conclude that $\hat w\in {\rm C}^2$.

It remains to verify the uniqueness of $\hat w$.
Note that if $\hat w'\in \rm L^2(\cQ)$ such that
\[
\int \hat w'\ \rm d\cQ=0,
\]
 and the corresponding $w'$ is ${\rm C}^1$
function satisfying $\widehat \nabla w' =B$, then $\hat\z=\hat w-\hat w'$ satisfies
$\widehat\nabla \z=0$, for $\z(x,\o)=\hat\z(\th_x\o)$. 
This means that $\hat \z(\th_x \o)$ does not depend on $x$.
Since the measure $\cQ$ is ergodic with respect to $\th$, we deduce that
$\hat \z$ is constant $\cQ$-almost surely. Since $\cQ$-integral of $\hat z$ is zero,
we deduce that $\hat z=0$. Hence $\hat w=\hat w'$, proving the uniqueness
of $w$.
\end{proof}

\begin{example} \label{Example3.6}
In this example, we examine the set $\mathfrak H^{-1}$
in the setting of quasiperiodic functions (see Example~2.8).
On the torus $\O=[0,1]^{N},\ 0=1$, we define the flow $$\th_x \o=\o+Ax \mod 1,$$ where $A$ is a 
$N\x(2d)$ matrix, and
$x\in\bR^{2d}$. Recall that if $\bP$ denotes the Lebesgue measure on $\O$, then
$\bP$ is ergodic with respect to $\th$ if and only if 
\[
m\in \bZ^{N}\setminus\{0\}\ \ \ \implies\ \ \ mA\neq 0.
\]
Consider the function
\begin{eqnarray}
u(\o)&=&\sum_{m\in\bZ^{N}}a_m{\rm e}^{{\rm i} m\cdot \o},\nonumber \\
u(\th_x\o)&=&\sum_{m\in\bZ^{N}}a_m{\rm e}^{{\rm i}(mA)\cdot x}{\rm e}^{{\rm i}m\cdot \o}. \nonumber
\end{eqnarray}
From this and 
\[
\bE \ u(\th_x\o)\overline {u(\o)}=\sum_{m\in\bZ^{N}}|a_m|^2{\rm e}^{{\rm i}(mA)\cdot x},
\]
we deduce, 
\begin{eqnarray}
Z({\rm d}\xi,\o)&=&
\sum_{m\in\bZ^N}a_m{\rm e}^{{\rm i}m\cdot\o}\ \d_{mA}({\rm d}\xi),\nonumber \\
G({\rm d}\xi)&=&\sum_{m\in\bZ^N}|a_m|^2\d_{mA}({\rm d}\xi),\nonumber\\
g({\rm d}\xi)&=&\sum_{m\in\bZ^N}|a_m|^2|mA|^{-2}\d_{mA}({\rm d}\xi). \nonumber
\end{eqnarray}
Hence 
\[
g(\bR^{2d})=\sum_{m\in\bZ^N}|a_m|^2|mA|^{-2}.
\]
The function $u\in \mathfrak H^{-1}(\bP)$ if $g(\bR^{2d})<\i$.
For example, a {\em Diophantine condition} of the form
\[
m\in \bZ^{N}\setminus\{0\}\ \ \ \implies\ \ \ |mA|\geq |m|^{-k},
\]
yields 
\[
\sum_{m\in\bZ^N}|m|^{2k}|a_m|^2<\i\ \ \ \implies\ \ \ g(\bR^{2d})<\i.
\]
Hence if $u$ possesses $k$ many derivatives in ${\rm L}^2$, then 
$u\in \mathfrak H^{-1}(\bP)$.
\end{example}

\ms
Our next ingredient for the proof of Theorem~\ref{th3.3} is an application of 
Ergodic Theorem.

 \bs
\begin{proposition} \la{pro3.3}
 Let $v(x)=v(x,\o)=\hat v(\tau_x\o)$ be a stationary process 
with $$c_0:=\bE |\hat v|<\i.$$ Given $\ell=(\ell_1,\dots,\ell_{2d})$, write
\[
{\rm I}(\ell)=\prod_{i=1}^{2d}[-\ell_i,\ell_i].
\]
Then almost surely, we can finds a sequence $$\ell^r=(\ell_1^r,\dots,
\ell^r_{2d})$$ such that $\ell_i^r\to\i$ in large $r$ limit, and
\[
\sup_r\s(\p {\rm I}(\ell^r))^{-1}\int_{\p {\rm I}(\ell^r)}|v(x,\o)|\ 
\s({\rm d}x)<\i,
\]
where $\s$ denotes the $2d-1$-dimensional surface measure.
\end{proposition}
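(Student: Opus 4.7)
\begin{skproof}
The plan is to reduce this to the multidimensional Birkhoff Ergodic Theorem applied to $|\hat v|$, using a foliation of cubes by their boundaries. It suffices to restrict to the diagonal case $\ell^r_1=\cdots=\ell^r_{2d}=s$, since the only requirement is that every component tend to infinity.

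First, I would observe that the boundaries of the cubes $I_s:=[-s,s]^{2d}$ foliate $[-\ell,\ell]^{2d}$: a point $x$ lies on $\partial I_s$ precisely when $\|x\|_\i=s$. Hence, by integrating in the ``sup-norm radius'' variable,
\be\nonumber
\int_{I_\ell}|v(x,\o)|\ {\rm d}x=\int_0^\ell F(s,\o)\ {\rm d}s,\ \ \ \
F(s,\o):=\int_{\p I_s}|v(x,\o)|\ \s({\rm d}x),
\ee
while $\s(\p I_s)=4d\,(2s)^{2d-1}=\frac{\rm d}{{\rm d}s}(2s)^{2d}$.

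Next I would apply the $\bR^{2d}$-Birkhoff Ergodic Theorem (using only that $c_0=\bE|\hat v|<\i$ and that $\th$ preserves $\cQ$; ergodicity is not needed, only an a.s.\ finite limit) to conclude that, for $\cQ$-almost every $\o$,
\be\nonumber
\lim_{\ell\to\i}\ \frac{1}{(2\ell)^{2d}}\int_{I_\ell}|v(x,\o)|\ {\rm d}x=:\bar v(\o)<\i.
\ee

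The proof is then by contradiction. Fix such an $\o$ and suppose that no diagonal sequence works, that is,
$$\liminf_{s\to\i}\frac{F(s,\o)}{\s(\p I_s)}=\i.$$
Then for every $M>0$ there exists $s_0=s_0(M,\o)$ with $F(s,\o)\ge M\,\s(\p I_s)$ for all $s\ge s_0$, so that
\be\nonumber
\int_{I_\ell}|v(x,\o)|\ {\rm d}x\ \ge\ M\int_{s_0}^\ell 4d\,(2s)^{2d-1}\ {\rm d}s\ =\ M\left((2\ell)^{2d}-(2s_0)^{2d}\right).
\ee
Dividing by $(2\ell)^{2d}$ and letting $\ell\to\i$ gives $\bar v(\o)\ge M$, which is absurd for $M$ large. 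Therefore $\liminf_{s\to\i}F(s,\o)/\s(\p I_s)<\i$ almost surely, and extracting a sequence $s^r\uparrow\i$ along which this bound is attained finishes the proof by setting $\ell^r=(s^r,\dots,s^r)$.

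I do not expect any real obstacle here; the only subtlety is verifying the coarea-type identity, but this is immediate from Fubini after slicing $I_\ell$ by the level sets of $x\mapsto\|x\|_\i$ (each level set being a finite union of $(2d-1)$-dimensional cubes, on which the sup-norm is an eikonal function with $|\nabla\|\cdot\|_\i|=1$ a.e.).
\end{skproof}
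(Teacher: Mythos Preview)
Your argument is correct and is in fact cleaner than the paper's. The paper proceeds differently: it invokes the Maximal Ergodic Theorem to bound $\sup_{r\ge 1}|I_r|^{-1}\int_{I_r}|v|\,{\rm d}x$ on a set of large probability, and then, for each coordinate direction $i$ separately, writes $\int_{I_r}|v|\,{\rm d}x=\int_0^r J_i^r(x_i)\,{\rm d}x_i$ and uses a Chebyshev/pigeonhole argument to locate a good slice $\ell_i^r\in[r/3,r]$ with $J_i^r(\ell_i^r)\le 4s(2r)^{2d-1}$. This produces an in general \emph{non-diagonal} $\ell^r$ together with an explicit bound on the boundary average in terms of the level $s$. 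Your route---foliating by sup-norm spheres, applying only the pointwise (Wiener/Birkhoff) ergodic theorem, and running a $\liminf$ contradiction---is more elementary, avoids the maximal inequality entirely, and yields a diagonal sequence, which is perfectly sufficient for the downstream application in the proof of Theorem~\ref{th3.3}. What the paper's approach buys in exchange is a constructive choice of $\ell^r$ with a quantitative bound depending on the maximal function; your approach trades this for brevity.
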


\begin{proof}
 To ease the notation, we write $h=|v|$.
Given $r>0$, write ${\rm I}_r$ for $[-r,r]^{2d}$, and define
\[
{\rm M}(\o)=\sup_{r\geq 1}|{\rm I}_r|^{-1}\int_{{\rm I}_r}h(x,\o)\ {\rm d}x.
\]
By the Maximal Ergodic Theorem (see for example Theorem~1.4 of \cite{R}),
\[
\bP(A_s):=\bP\left(\left\{\o:\ {\rm M}(\o)>s\right\}\right)\leq s^{-1}c_0.
\]
Fix a large $s>0$, and $\o\in A_s^c$, so that $${\rm M}(\o)\leq s.$$
Set $\hat x=(x_2,\dots,x_{2d}).$  Since $\o\in A_s^c$, we can write
\begin{eqnarray}
\int_{{\rm I}_r}w(\th_x\o)\ {\rm d}x&=& \int_{0}^rJ_1^r(x_1)\ {\rm d}x_1 \nonumber \\
&:=& \int_{0}^r\left[\int_{\hat{\rm  I}_r} (w(x_1,\hat x)+w(-x_1,\hat x))\ {\rm d}\hat x\right]\ {\rm d}x_1 \nonumber \\
&\leq& s|{\rm I}_r| = s(2r)^{2d}, \nonumber 
\end{eqnarray}
where $\hat {\rm I}_r=[-r,r]^{2d-1}$. From this and 
Chebyshev's inequality, 
\[
\left|\left\{x_1\in[0,r]:\ J_1^r(x_1)>4s(2r)^{2d-1}\right\}\right|\leq
 \frac r2.
\]
As a result, there exists $\ell_1^r\in[r/3,r]$ such that 
\[
J_1^{r}(\ell_1^r)\leq 4s(2r)^{2d-1}.
\]
In the same fashion, we can write 
\[
\int_{\rm I_r}h(x,\o)\ {\rm d}x=\int_{0}^rJ_i^r(x_i)\ {\rm d}x_i,
\]
for $i\in\{1,\dots,2d-1\}$, and find $\ell_i^r\in[r/3,r]$ such that 
\[
J_i^{r}(\ell_i^r)\leq 4s(2r)^{2d-1}.
\]

For $\o\in A_s^c$, and
$\ell^r=\ell_r(\o)=(\ell_1^r,\dots,\ell_{2d}^r)$ as above,
 observe that for each $j$,
\begin{eqnarray}
\int_{\p {\rm I}(\ell^r)}h(x,\o)\ \s({\rm d}x)
&=& \sum_{i=1}^{2d} J_i^{r}(\ell_i^r) \nonumber \\
&\leq& 8ds(2r)^{2d-1} \nonumber \\
&\leq& 8ds3^{2d-1}\prod_{i\neq j}^{2d}(2\ell^r_i) , \nonumber
\end{eqnarray}
because $\ell_i\in[r/3,r]$ for every $i$.
From this we learn
\begin{eqnarray}
\int_{\p {\rm I}(\ell^r)}h(x,\o)\ \s({\rm d}x)
&\leq& 4s3^{2d-1}\sum_{j=1}^{2d}\prod_{i\neq j}^{2d}(2\ell^r_i) \nonumber \\
&=&2s3^{2d-1} \s(\p{\rm I}(\ell^r)). \nonumber
\end{eqnarray}
This completes the proof for $\o\in A_s^c$. Since $\bP(A_s)\to 0$
as $s\to\i$, we are done.
\end{proof}

\ms
With the aid of Propositions~\ref{pro3.1}-\ref{pro3.3}, we are now ready to tackle 
Theorem~\ref{th3.3}.

\subsection*{Proof of Theorem~\ref{th3.3}} {\em(Step 1)}
Recall the process $B$ that was defined in
\eqref{eq3.19}. By Proposition~\ref{pro3.1}(i), the process 
$B$ is stationary. We are done if we can apply Proposition~\ref{pro3.2} to
$B$. For this, we need to verify the properties (i) and (ii) of this Proposition.
Proposition~\ref{pro3.1}(iii) verifies property (ii). The property (i)
consists of two condition. The first condition of this property requires $\hat A$ to be in 
$\mathfrak{H}^{-1}(\cQ)$. For this, it suffices to show
$\hat A\in\widehat{\mathfrak{H}}^{-1}(\cQ)$, which is an immediate consequence
of Proposition~\ref{pro3.1}(ii), and our assumptions $\pi\in
\widehat{\mathfrak{H}}^{-1}(\cQ)$ and \eqref{eq3.6}. 
It remains to verify \eqref{eq3.20}:
\be\la{eq3.24}
a=(b,c):=\int_{\cS}B(0)\ \cP({\rm d}\Phi)=\int_{\cS}\widehat\Phi(0)\ 
\cP({\rm d}\Phi)=0.
\ee
Observe that by Proposition~\ref{pro3.2} is applicable to
$B(Q,p)-a$. In other words,  there exists a ${\rm C}^2$ stationary function $w(Q,p)=\hat w(\th_{(Q,p)}\o)$ such that 
$$B(Q,p)-a=\widehat\nabla w,$$ or
\begin{eqnarray}
\hat P(Q,p)&=&\hat P(Q,p,\o)=c+p+w_Q(Q,p,\o),\nonumber \\
\hat q(Q,p)&=&\hat q(Q,p,\o)=b+Q+w_p(Q,p,\o). \nonumber
\end{eqnarray}
Recall that $\o=\cF(\Phi)=\Phi-{\rm id}$.

\ms\noi
{\em(Step 2)} 
Using our assumption \eqref{eq3.5}
 and the stationarity of $\cQ$, 
\begin{eqnarray}
0&=&\iint_{\rm I(\ell)}\big(\Phi(q,p)-(q,p)\big)\, {\rm d}q \ {\rm d}p\ \cQ({\rm d}\o) \nonumber \\
&=&\iint_{\rm I(\ell)}\big(Q(q,p)-q,P(q,p)-p\big)\ {\rm d}q \ {\rm d}p\ \cQ({\rm d}\o)  \nonumber \\
&=&\iint_{\rm I'(\ell)}\big(Q-\hat q(Q,p,\o),\hat P(Q,p,\o)-p\big)\ {\rm d}q \ {\rm d}p\ \cQ({\rm d}\o) \nonumber \\
&=&\iint_{\rm I'(\ell)}\big(Q-\hat q(Q,p,\o),\hat P(Q,p,\o)-p\big)\ \det \big(\hat q_Q(Q,p,\o)\big) \ {\rm d}Q \ {\rm d}p\ \cQ({\rm d}\o) \nonumber \\
&=&\iint_{\rm I'(\ell)}\big(-b-w_p(Q,p,\o),c+w_Q(Q,p,\o)\big)\ 
\det \big({\rm I}_d+w_{Qp}(Q,p,\o)\big) \ {\rm d}Q \ {\rm d}p\ \cQ({\rm d}\o) \nonumber \\
&=& |{\rm I}(\ell)|(-b,c)-\iint_{{\rm I}'(\ell)}J\nabla w(Q,p,\o)\ \det \big({\rm I}_d+w_{Qp}(Q,p,\o)\big) \ {\rm d}Q\ {\rm d}p\ \cQ({\rm d}\o), \nonumber
\end{eqnarray}
where ${\rm I}_d$ is the $d\x d$ identity matrix,
$\ell=(\ell_1,\dots,\ell_{2d})$, ${\rm I}(\ell)=\prod_{i=1}^{2d}
[-\ell_i,\ell_i]^{2d}$, and
\[
{\rm I}'(\ell)=\big\{(Q,p):\  (\hat q(Q,p,\o),p)\in \rm I(\ell)\big\}.
\]
In summary,
\be\la{eq3.25}
(-b,c)=\frac1{|\rm I(\ell)|}\iint_{{\rm I}'(\ell)}J\nabla w(Q,p,\o)\ \det \big({\rm I_d}+w_{Qp}(Q,p,\o)\big) \ {\rm d}Q\ {\rm d}p\ \cQ({\rm d}\o).
\ee

 Let us write
\begin{align}\la{eq3.26}
&c_0=c_0(\o):=\|\o\|_{{\rm C}^0},\ \ \ \ 
c_1=c_1(\o):=\|{\rm D}\o\|_{\rm C^0},\\ 
\nonumber
&Z_\ell=\int_{{\rm I}(\ell)}\nabla w(Q,p,\o)\ 
\det \big({\rm I}_d+w_{Qp}(Q,p,\o)\big) \ {\rm d} Q \ {\rm d}p,\\
\nonumber
&Z'_\ell=\int_{{\rm I'}(\ell)}\nabla w(Q,p,\o)\ 
\det \big({\rm I}_d+w_{Qp}(Q,p,\o)\big) \ {\rm d} Q \ {\rm d}p.
\end{align}
Observe 
\[
 \rm I(\ell-c_0(\o))\subset \rm I'(\ell)\subset \rm I(\ell+c_0(\o)).
\]
Since
\[
\big|\rm I(\ell+c_0(\o))\setminus \rm I(\ell-c_0(\o))\big|\leq c_2 c_0(\o)\ell^{2d-1},
\]
for a constant $c_2$, we learn
\be\la{eq3.27}
\big|Z_\ell-Z'_\ell\big|
\leq  c_2c_0(\o)^2c_1(\o)^{d}d!\ \ell^{2d-1}.
\ee
Recall that we wish to show $b=c=0$. On account of \eqref{eq3.27}, 
\eqref{eq3.26}, and $\cQ$-almost sure finiteness of $c_0(\o)+c_1(\o)$, it suffices to show 
\be\la{eq3.28}
\lim_{r\to\i}|{\rm I}(\ell^r)|^{-1}\int_{{\rm I}(\ell^r)}\nabla w(Q,p,\o)\ 
\det \big({\rm I}_d+w_{Qp}(Q,p,\o)\big) \ {\rm d} Q \ {\rm d}p=0, 
\ee  
 for $\ell^r$ as in Proposition~\ref{pro3.3}. 

\ms\noi
{\em(Step 3)}
Note that a interchanging $Q$ with $p$, or performing a permutation among
 the variables $(Q_1,\dots, Q_d)$, or $(p_1,\dots,p_d)$
does not alter the integral in \eqref{eq3.28}. 
Because of this, \eqref{eq3.28} would follow if we can show
\be\la{eq3.29}
\lim_{r\to\i}|{\rm I}(\ell^r)|^{-1}\iint_{{\rm I}(\ell^r)}w_{Q_1}(Q,p,\o)\ 
\det \big({\rm I}_d+w_{Qp}(Q,p,\o)\big) \ {\rm d}Q \ {\rm d}p\ \cQ({\rm d}\o)=0.
\ee
To simplify the notation, set
$$w_{Qp}=:A=[a_{ij}]_{i,j=1}^{d}.$$
Expanding the determinant in \eqref{eq3.29} yields
\[
\det( {\rm I}+A)=1+\sum_{k=1}^{d}\sum_{1\leq i_1<\dots<i_k\leq d}
\det[a_{i_ji_l}]_{j,l=1}^k.
\]
This expansion yields an analogous expansion for the left-hand of 
\eqref{eq3.29}. As we examine this expansion, we encounter two types of terms:
Given $k$, either $1\in\{i_1,\dots,i_k\}$, or $1\notin\{i_1,\dots,i_k\}$.
If the former occurs, we perform a permutation to rewrite the corresponding integral as
\[
Z_\ell(\o):=\iint_{{\rm I}(\ell)}w_{Q_1}(Q,p,\o)\ \det \big[w_{Q_ip_j}(Q,p,\o)\big)\big]_{i,j=1}^k \ {\rm d}Q \ {\rm d}p.
\]
If the latter occurs, then we must have $k<d$, and after a permutation, we
 rewrite the corresponding integral as
\[
Z'_\ell(\o):=\int_{{\rm I}(\ell)}w_{Q_1}(Q,p,\o)\ \det \big[w_{Q_{i+1}p_j}(Q,p,\o)\big)\big]_{i,j=1}^{k} \ {\rm d}Q \ {\rm d}p.
\]
We verify\eqref{eq3.29} by showing
\begin{align}\la{eq3.30}
&\lim_{\ell\to\i}|{\rm I}(\ell)|^{-1} Z_\ell(\o)=0,\\
&\lim_{r\to\i}|{\rm I}(\ell^r)|^{-1} Z'_{\ell^r}(\o)=0,\la{eq3.31}
\end{align}
$\cQ$-almost surely. (Recall $\ell^r=\ell^r(\o)$ of Proposition~\ref{pro3.3}
can depend on $\o$.)

\ms\noi
{\em(Step 4)}
We first focus on $Z_\ell$. Let us write
$\bar p=(p_1,\dots,p_{k})$. 
If we fix $Q$ and $$p'=(p_{k+1},\dots,p_{d}),$$ and write 
\[
F(\bar p;Q,p')=F(\bar p) :=\big(w_{Q_1}(Q,p),\dots,w_{Q_k}(Q,p)\big),
\]
then for each $(Q,p')$, the
 ${\rm d}\bar p$ integration in $Z_\ell$ takes the form
\begin{eqnarray}
&& \int_{\bar {\rm I}(\ell)}w_{Q_1}(Q,\bar p,p')\ \det {\rm D}_{\bar p}F(\bar p) \ {\rm d}\bar p \nonumber \\
 &=&\int_{\bar {\rm I}(\ell)}\ \det {\rm D}_{\bar p}F'(\bar p) \ {\rm d}\bar p \nonumber \\
 &=&\int_{F'(\bar {\rm I}(\ell))}\  {\rm d} p_1\wedge \dots\wedge {\rm d}p_k, \nonumber 
\end{eqnarray}
where $D_{\bar p}$ denotes the differentiation with respect of $\bar p$, and
$$\bar {\rm I}(\ell)=\prod_{i=1}^k[-\ell_i,\ell_i],\ \ \ \ 
F'=\big(w_{Q_1}^2/2, w_{Q_2},\dots,w_{Q_k}\big).$$
Here we are using the fact that ${\rm D}_{\bar p}F'$ is obtained from
${\rm D}_{\bar p}F$ by multiplying its first row by $w_{Q_1}$.
Since $\|\nabla w\|_{{\rm C}^0}\leq c_0,$ for
 $c_0=c_0(\o)$ as in \eqref{eq3.26}, we can write
\[
F'\big(\bar {\rm I}(\ell)\big)\subseteq \big[-c^2_0/2,c^2_0/2\big]
\x [-c_0,c_0]^{k-1}.
\]
Hence
\[
\left|\int_{\bar {\rm I}(\ell)}\ \det {\rm D}_{\bar p}F'(\bar p) \ {\rm d}\bar p\right|
\leq 2^{k-1}c_0(\o)^{k+1},
\]
which yields the bound
\[
|Z_\ell(\o)|\leq 2^{k-1}c_0(\o)^{k+1}\prod_{i=k+1}^{2d}(2\ell_i).
\]
This certainly yields  \eqref{eq3.30} because $k\geq 1$, and $c_0(\o)<\i,$
$\cQ$-almost surely.

\ms\noi
{\em(Step 5)}
We now turn our attention to $Z'_\ell$. 
To ease the notation, let us set 
$$f=w_{\hat p}=\big(w_{p_2},\dots,w_{k+1}\big),$$ 
with $$\hat p=(p_2,\dots,p_{k+1}),$$
and regard $f$ as a column vector. With this interpretation, we can write
\[
Z'_\ell=\int_{{I}(\ell)}w_{Q_1}\ \det \big[f_{Q_2},\dots,f_{Q_{k+1}}\big] \ {\rm d}Q\ {\rm d}p.
\]

We wish to integrate by parts with respect to $Q_1$. This can be
performed with a boundary contribution that involves
the functions $w$, and the first derivatives of $f$. More precisely,
\be\la{eq3.32}
Z'_\ell=\sum_{j=2}^{k+1}A^j_\ell+E^1_\ell,
\ee
 where $E^1_\ell$ satisfies a bound of the form 
\be\la{eq3.33}
\big|E_\ell^1\big|\leq  k!\ c_1(\o)^k\int_{\p {\rm I}(\ell)}|w|\ {\rm d}x,
\ee
 and
\[
 A^j_\ell=-\int_{{\rm I}(\ell)}w\ \det \big[f_{Q_2},\dots,f_{Q_{j-1}},
 f_{Q_{j}Q_1},f_{Q_{j+1}},\dots,f_{Q_{k+1}}\big] \ {\rm d} Q\ {\rm d}p,
\]
when $3<j<k$ (and a similar expression when $j\in\{2,3,k,k+1\}$).
Here we are using the fact that $f\in {\rm C}^2$
(because $\Phi\in {\rm C}^2$), and 
\[
\|{\rm D} f\|_{{\rm C}^0}\leq \|{\rm D}^2 w\|_{{\rm C}^0}=\|{\rm D}\o\|_{{\rm C}^0}=c_1(\o).
\]
We now perform an integration by parts with respect to the variable
 $Q_j$. This involves a boundary contribution that again involves $w$ and first derivatives of
$f$. Hence
\be\la{eq3.34}
A^j_\ell=B^j_\ell +\sum_{i=2,i\neq j}^{k+1}C^{ij}_\ell+E^{2,j}_\ell,
\ee
where $E_\ell^{2,j}$ satisfies a bound of the form 
\be\la{eq3.35}
\big|E_\ell^{2,j}\big|\leq  k!\ c_1(\o)^k\int_{\p {\rm I}(\ell)}|w|\ {\rm d}x,
\ee
 and 
\begin{align*}
B^j_\ell&=\int_{{\rm I}(\ell)}w_{Q_j}\ \det \big[f_{Q_2},\dots,f_{Q_{j-1}},
 f_{Q_1},f_{Q_{j+1}},\dots,f_{Q_{k+1}}\big] \ {\rm d} Q \ {\rm d}p,\\
C^{ij}_\ell&=\int_{{\rm I}(\ell)}w\ \det \big[g_2^{ij},\dots,g_{k+1}^{ij}\big] \ {\rm d}Q\ {\rm d}p .
\end{align*}
Here $g^{ij}_r=f_{Q_r}$ when $r\neq i,j$, $g^{ij}_j=
f_{Q_1}$, and $g^{ij}_i=f_{Q_iQ_j}$. From anti-symmetry of the determinant, we can readily see that $$C^{ij}_\ell=-C^{ji}_\ell.$$ 
From this, and \eqref{eq3.32}--\eqref{eq3.35} we deduce,
\be\la{eq3.36}
Z'_\ell=\sum_{j=2}^{k+1} B^{j}_\ell+E^3_\ell,
\ee
with $E^3_\ell$ satisfying  a bound of the form 
\be\la{eq3.37}
\big|E_\ell^3\big|\leq [1+k!k]\ c_1(\o)^k\int_{\p I(\ell)}|w|\ {\rm d}x.
\ee

\ms\noi
{\em(Step 6)} We now study $B^j_\ell$. Let us fix $Q$ and
$$p''=(p_1,p_{k+2},\dots,p_d),$$
and focus on the $\hat p$-integration. We also set $h_r=w_{Q_r}$, and 
$$\hat {\rm I}(\ell)=\prod_{i=2}^{k+1}[-\ell_i,\ell_i].$$ Note that
the expression
\[
B^j_\ell(Q,p''):=\int_{\hat {\rm I}(\ell)}w_{Q_j}\ \det \big[f_{Q_2},\dots,f_{Q_{j-1}},
 f_{Q_1},f_{Q_{j+1}},\dots,f_{Q_{k+1}}\big] \ {\rm d}\hat p,
\]
can be written as
\[
\int_{\hat {\rm I}(\ell)}\ {\rm d}h_2\wedge\dots\wedge {\rm d}h_{j-1}\wedge(h_j\  {\rm d}h_1)\wedge
{\rm d}h_{j+1}\wedge\dots\wedge {\rm d}h_{k+1}.
\]
If we set
\begin{align*}
\hat B^j_\ell(Q,p''):=&\int_{\hat {\rm I}(\ell)}w_{Q_1}\ \det \big[f_{Q_2},\dots,f_{Q_{j-1}},
 f_{Q_j},f_{Q_{j+1}},\dots,f_{Q_k}\big] \ {\rm d}\hat p\\
=&\int_{\hat {\rm I}(\ell)}\ {\rm d}h_2\wedge\dots\wedge {\rm d}h_{j-1}\wedge(h_1\  {\rm d}h_j)\wedge
{\rm d}h_{j+1}\wedge\dots\wedge {\rm d}h_{k+1},
\end{align*}
then  
\[
B^j_\ell(Q,p'')+\hat B^j_\ell(Q,p'')=\int_{\hat {\rm I}(\ell)}\ {\rm d}h_2\wedge\dots\wedge {\rm d}h_{j-1}\wedge {\rm d}(h_jh_1)\wedge
{\rm d}h_{j+1}\wedge\dots\wedge {\rm d}h_{k+1}.
\]
But if $$G(\hat p)=(h_2,\dots, h_{j-1},h_jh_1,h_{j+1},\dots, h_{k+1}),$$ then
\be\la{eq3.38}
B^j_\ell(Q,p'')+\hat B^j_\ell(Q,p'')=\int_{G(\hat {\rm I}(\ell))}\ {\rm d}p_2\wedge\dots\wedge {\rm d}p_{k+1}.
\ee
Since the function $|h_j|$ is bounded by $c_0$, we deduce that 
$$G(\hat {\rm I}(\ell))\subset [-c_0,c_0]^{j-2}\x[-c_0^2,c_0^2]\x [-c_0,c_0]^{k-j+1}.$$ This  and \eqref{eq3.38} imply
\[
\big|B^j_\ell(Q,p'')+\hat B^j_\ell(Q,p'')\big|\leq 2^kc_0(\o)^{k+1}.
\]
From this  we learn 
\[
B^j_\ell=-Z'_\ell+E^{4,j}_\ell,
\] 
where
\[
\big|E_\ell^{4,j}\big|\leq 2^kc_0(\o)^{k+1}(2\ell)^{2d-k}.
\]
This and \eqref{eq3.36} imply, 
\[
Z'_\ell=-kZ'_\ell+E^3_\ell+E^4_\ell, 
\]
with
\be\la{eq3.39}
\big|E_\ell^4\big|\leq k2^kc_0(\o)^{k+1}(2\ell)^{2d-k}.
\ee
In other words, 
\be\la{eq3.40}
(k +1)Z'_\ell=E^3_\ell+E^4_\ell.
\ee
We now divide both sides of \eqref{eq3.40} by $|{\rm I}(\ell)|$, 
and choose $$\ell= \ell^r$$ 
with $\ell^r$ as in Proposition~\ref{pro3.3}, where $v$ is chosen to be  $w$. Finally we send $r\to\i$ and use \eqref{eq3.37}, \eqref{eq3.39}, and Proposition~\ref{pro3.3} to deduce \eqref{eq3.31}. This completes the proof.

\section{The density of fixed points} \label{cpsp}

From Theorem~\ref{th3.3}, we learn that a stationary symplectic twist 
diffeomorphism can be represented as
\[
\Phi\big(Q+w_p(Q,p),p\big)=\big(Q,p+w_Q(Q,p)\big),
\]
for a stationary process $w(Q,p,\o)$. From this representation it is clear
that $$\Phi(Q,p)=(Q,p)$$ if and only if $$f(Q,p,\o):=\widehat\nabla w(Q,p,\o)=0,$$
(or equivalently $\nabla w(Q,p,\o)=0$).
In words, there exists a one-to-one correspondence between the fixed points
of $\Phi$ and the critical points of $w$. We now have the question of existence
of critical points of $w$ before us. To ease the notation, we write $x$ for $(Q,p)$.

We wish to use Ergodic Theorem 
to count the number of points in the zero set of the stationary process $f$,
restricted to a large box.

As a preparation for the statement of the main results of this section, we make some definitions.

\begin{definition}
Recall that a function $f:\bR^n\x\O\to\bR^n$ is {\em $\th$-stationary} if
\[
f(x+a,\o)=f(x,\th_a\o),
\]
or equivalently, $f(x,\o)=\hat f(\th_x\o)$ for $\hat f(\o)=f(0,\o)$.
Given a stationary function $f(x,\o)$, and an open set $U$ of $\bR^{n\x n}$, we define
\[
\cN_U(a,\L ,\o)=\sharp Z_U(a,\L ,\o),
\]
where $Z_U(a,\L ,\o)=Z_U(a,\o)\cap \L$, where
\[
Z_U(a,\o)=\big\{x\in \bR^n :\ f(x,\o)=a,\ {\rm D}f(x,\o)\in U\big\}.
\]
We simply write 
\[
\cN_U(a,\o):=\cN_{U}(a,[0,1]^n ,\o).
\]
As an example of $U$, we may consider the set of matrices $\G \in\bR^{n\x n}$
with exactly $k$ many negative eigenvalues.
\end{definition}

\bp\label{prop4.1} Assume $f$ is $\th$-stationary with respect to the probability measure $\cQ$. If 
\be\la{eq4.1}
\bE \cN_U(a,\o)=\int_\O \cN_U(a,\o)\ \cQ({\rm d}\o)<\i,
\ee
then 
\begin{equation}\label{eq4.2}
\lim_{\ell\to\i}
(2\ell)^{-n}\cN_U\big(a,[-\ell,\ell)^n,\o\big)= \bE \cN_U(a,\o),
\end{equation}
$\cQ$-almost surely, and in $\rm L^1(\cQ)$ sense. 
\ep

\begin{proof}
Observe that if $k$ and $k'$ are two positive integers
with $k<k'$, then
\[
\cN_U\big(a,[k,k')^n,\o\big)=\sum_{i_1,\dots,i_n=k}^{k'-1}
\cN_U\big(a,[i_1,i_1+1)\x\dots\x[i_n,i_n+1),\o\big).
\]
This and the stationarity imply
\be\la{eq4.3}
\bE\ \cN_U\big(a,[k,k')^n,\o\big)=(k'-k)^n\bE \cN_U(a,\o).
\ee
If we assume \eqref{eq4.1}, then the right-hand side of \eqref{eq4.3} is finite.
This in particular implies that the set $Z_U(a,\o)$ is discrete, and that 
$\cN_U\big(a, [-\ell,\ell]^n,\o\big)$ is finite, $\cQ$-almost surely.

By the Ergodic Theorem,
\be\la{eq4.4}
\lim_{\ell\to\i}(2\ell)^{-n}\int_{[-\ell,\ell]^n}\cN_U(a,\th_x\o)\ {\rm d}x=
\bE \cN_U(a,\o),
\ee
almost surely, and in ${\rm L}^1(\cQ)$ sense. We wish to use \eqref{eq4.4}
to deduce  \eqref{eq4.2}. 

Observe that by the stationarity,
\be\la{eq4.5}
Z_U(a, \th_x\o)=Z_U(a,\o)-x:=\big\{y-x:\ y\in Z_U(a,\o)\big\}.
\ee
From this we can readily deduce
\[
\cN_U(a,\L,\th_x\o)=\cN_U(a,\L+x,\o).
\]
This in turn implies
\begin{align*}
\cN_U(a,\th_x\o)&=\cN_U\big(a,[0,1]^n+x,\o\big)
=\sum_{z\in Z(a,\o)}
1\!\!1\left(z\in [0,1]^n+x\right)\\
&=\sum_{z\in Z(a,\o)}
1\!\!1\left(x\in [-1,0]^n+z\right).
\end{align*}
(By convention, the right-hand side is $0$, when $Z_U(a,\o)=\varnothing$.)
 As a consequence, 
\begin{align} \nonumber
\int_{[-\ell-1,\ell+1]^n}\cN_U(a,\th_x\o)\ {\rm d}x&=
\sum_{z\in Z(a,\o)}\int_{[-\ell-1,\ell+1]^n}
1\!\!1\left(x\in [-1,0]^n+z\right)\ {\rm d}x\\
&\geq \cN_U\big(a,[-\ell,\ell]^n,\o\big).\la{eq4.6}
\end{align}
In the same manner we can show
\[
\int_{[-\ell+1,\ell-1]^n}\cN_U(a,\th_x\o)\ {\rm d}x\le
\cN_U\big(a,[-\ell,\ell]^n,\o\big)\ {\rm d}x.
\]
From this, \eqref{eq4.6}, and \eqref{eq4.4}, we can readily deduce \eqref{eq4.2}.
\end{proof}

\bs
On account of \eqref{eq4.3}, we wish to find an explicit formula $\bE\cN(0,\o)$.
In particular we would like to find conditions that would guarantee 
$$\bE\cN(0,\o)>0,$$
so that we can deduce the existence of critical points of $w$. 
Formally speaking, we expect
\be\la{eq4.7}
\bE \ \cN_U(0,\o)=\bE \big[1\!\!1\big(\pmb D \hat f(\o)\in U\big)\ 
|\det \pmb  D\hat f(\o)|\ \d_0\big(\hat f(\o)\big)\big],
\ee
provided that the right-hand side is well-defined. 
(See Definition~\ref{def3.1}(i), for the definition of the operator $\pmb D$.)
In this section, we give one set
of sufficient conditions that would allow us to make sense of \eqref{eq4.7}
(see \eqref{eq4.9} below).
In Section 5, we will be able to use the classical coarea formula to rewrite
the right-hand of \eqref{eq4.7} in a more tractable form when $f$ is quasiperiodic.

As it turns out, 
a multi-dimensional generalization of the classical Kac-Rice formula,
would allow us to express  $\bE\cN(0,\o)$ in terms of the probability 
density of the random variable 
$$
(\hat f(\o),{\pmb D}\hat f(\o)).
$$
We refer to references \cite{AT07,AT09,AW} for  thorough discussions
of Kac-Rice type formulas, and their applications for Gaussian processes.
For our purposes, we need the following variant of Kac-Rice formula. 

\bth\la{th4.2}
Suppose that $f:\bR^n\to\bR^n,\ f(x,\o)=\hat f(\th_x\o)$ is ${\rm C}^2$ $\th$-stationary process.
Assume 
\be\la{eq4.8}
 \bE\ \cN(a,\o)<\i,
\ee
for $a$ near $0$, and  the random variable $(\hat f(\o),{\pmb D}\hat f(\o))$ 
has a density $p(a,\G )$ such that the following conditions are true:
\bi
\item The function 
\[
Q(a):=\int_U |\det \G |\ p(a,\G )\ {\rm d}\G ,
\]
is continuous near $0$. 
\item The function
\[
p(a):=\int_U \ p(a,\G )\ {\rm d}\G,
\]
is bounded near $0$.
\ei
Then
\be\la{eq4.9}
\bE\ \cN_U(0,\o)=\int_U |\det \G | \ p(0,\G )\ {\rm d}\G .
\ee\et

\begin{proof} {\em(Step 1)} According to Area (or Coarea) Formula,
\be \la{eq4.10}
\int_{\bR^n} \z(a)\cN_U(a,\o)\ {\rm d}a=\int_{[0,1]^n}\z(f(x))\ 1\!\!1\big({\rm D}f(x,\o)\in U\big)\ 
|\det {\rm D} f(x,\o)|\ {\rm d}x,
\ee
for every continuous function $\z$ (see for example \cite{AW}). From taking $\cQ$-expectation of both sides  we deduce
\[
\int_{\bR^n} \z(a)\big[\bE \ \cN_U(a,\o)\big]\ {\rm d}a=
\int_{\bR^n}\int_U\z(a) 
|\det \G |\ p(a,\G )\ {\rm d}\G  \ {\rm d}a=\int_{\bR^n}\z(a) Q(a)
\ {\rm d}a.
\]
Hence, 
\be\la{eq4.11}
\bE \ \cN_U(a,\o)=\int_U|\det \G |\ p(a,\G )\ {\rm d}\G ,
\ee
for Lebesgue almost all $a\in\bR^n$. We wish to show that \eqref{eq4.11} holds
for {\em all} $a$ near $0$. To achieve this we will show that for each
$\d>0$, there exists a measurable set $\O(\d)\subset\O$ such that
the following statements hold:
\bi
\item [(i)] $\lim_{\d\to 0}\cQ\big(\O(\d)\big)=1$.
\item [(ii)] If $\o\in\O(\d)$, and $|u|\leq\d$, then $$\cN_U(u,\o)=\cN_U(0,\o).$$
\item [(iii)] If $\o\in\O(\d),$ and $\e\in(0,\d)$, then
\be
 \cN_U(0,\o)=\frac 1{|{\rm B}_{\e}(0)|}
\int_{[0,1]^n}1\!\!1(|f(x,\o)|\leq  \e)\ 1\!\!1\big({\rm D}f(x,\o)\in U\big)\ 
|\det {\rm D}f(x,\o)|\ {\rm d}x.\la{eq4.12}
\ee\ei
Let us demonstrate how (i)-(iii), and \eqref{eq4.11} imply \eqref{eq4.9}.
Indeed from (iii) we deduce
that when $\e\in(0,\d)$, the expression
\[
\bE\ \cN_U(0,\o)1\!\!1\big(\o\in \O(\d)\big),
\]
equals
\begin{align*}
\bE \ &\frac 1{|{\rm B}_{\e}(0)|}
\int_{[0,1]^n}1\!\!1\big(|f(x,\o)|\leq  \e,\ {\rm D}f(x,\o)\in U\big)\ 
|\det {\rm D}f(x,\o)|\ {\rm d}x\ 1\!\!1\big(\o\in \O(\d)\big)\\
&\leq\bE \ \frac 1{|{\rm B}_{\e}(0)|}
\int_{[0,1]^n}\ 1\!\!1\big(|f(x,\o)|\leq  \e,\ {\rm D}f(x,\o)\in U\big)\  |\det {\rm D}f(x,\o)|\ {\rm d}x\\
&=\frac 1{|{\rm B}_{\e}(0)|}\bE \  1\!\!1\big(|f(0,\o)|\leq  \e,\ {\rm D}f(0,\o)\in U\big)\ 
|\det {\rm D}f(0,\o)|\\
&=\frac 1{|{\rm B}_{\e}(0)|}\int_U \int_{{\rm B}_\e(0)}|\det \G | \
p(a,\G )\ {\rm d}a\ {\rm d}\G =
\frac 1{|{\rm B}_{\e}(0)|}\int_{{\rm B}_\e(0)}Q(a)\ {\rm d}a,
\end{align*}
where we use the stationarity for the first equality. 

We then send $\e\to 0$,
and $\d\to 0$ ( in this order), and use the continuity of $Q$ at $0$ to deduce
\be\la{eq4.13}
\bE\ \cN_U(0,\o)\leq  \int_U |\det \G | p(0,\G )\ {\rm d}a\ {\rm d}\G .
\ee
On the other-hand, by \eqref{eq4.11}, we can find a sequence $a_k\to 0$
such that
\be\la{eq4.14}
\bE\ \cN_U(a_k,\o)= \int_U |\det \G | p(a_k,\G )\ {\rm d} a\ {\rm d}\G .
\ee
We use such a sequence to argue
\begin{eqnarray}
\bE\ \cN_U(0,\o)&\leq&  \int_U |\det \G | p(0,\G )\ {\rm d}a\ {\rm d}\G  \nonumber \\
&=&\lim_{k\to\i}\int_U |\det \G | p(a_k,\G )\ {\rm d}a\ {\rm d}\G  \nonumber \\
&=&\lim_{k\to\i}\bE\ \cN_U(a_k,\o) \nonumber \\
&=&\lim_{k\to\i}\lim_{\d\to 0} \bE\ \cN_U(a_k,\o)\ 1\!\!1\big(\o\in \O(\d)\big) \nonumber \\
&=&\lim_{k\to\i}\lim_{\d\to 0}
\bE\ \cN_U(0,\o)\ 1\!\!1\big(\o\in \O(\d)\big) \nonumber \\
&=& \bE\ \cN_U(0,\o), \nonumber
\end{eqnarray}
where we used 
\bi
\item \eqref{eq4.13} for the beginning inequality,
\item the continuity of $Q$ at $0$, for the first equality,
\item \eqref{eq4.14} for the second equality,
\item (i) for the third equality,
\item (ii) for the fourth equality
\item (i) for the last equality. 
\ei
Since we must
have equality for the inequalities in the above display, we arrive at
\eqref{eq4.9}.
 It remains to construct the sets $\O(\d),\ \d>0$, satisfying (i)-(iii).

\ms\noi
{\em(Step 2)} Let us write $Z=Z(\o)$ for the level set $Z_U(a,\o)$, when $a=0$. We also set
\begin{align*}
\O_0&=\big\{\o:\ {\text{there exists}} \ x\in [0,1]^n\ {\text{ such that }}\ 
f(x,\o)=0,\ \det {\rm D}f(x,\o)= 0\big\},\\
\O_1&=\big\{\o:\ {\text{there exists}} \ x\in \p \big([0,1]^n\big)\ {\text{ such that }}\ 
f(x,\o)=0\big\}.
\end{align*}
We assert,
\begin{align}\la{eq4.15}
\cQ(\O_0)&=0,\\
\cQ(\O_1)&=0.\la{eq4.16}
\end{align}
Our assumptions on $f$ would allow us to use Proposition~6.5 of \cite{AW} to 
deduce \eqref{eq4.15}.

Recall that by our assumption $\bE\cN(0,\o)<\i$,
the set $$Z(\o)\cap[0,1]^n$$ is finite almost surely. This and stationarity imply
that $$Z(\o)\cap [-\ell,\ell]^n$$ is finite for every positive $\ell$. Hence the set
$Z(\o)$ is discrete almost surely.  

We now argue that \eqref{eq4.16} 
is a consequence of the discreteness and the stationarity of the set $Z(\o)$.
To explain this, let us write $\l({\rm d}x)$ for the Lebesgue measure on $\bR$, and 
let us write $$\pi_i(x_1,\dots,x_n)=x_i$$ for the $i$-th coordinate projection.
Evidently, the discreteness of $Z$ implies that $\l(Z(\o))=0$, which in turn implies 
\[
(\l\x \cQ)\big(\big\{(a,\o):\ a\in \pi_iZ(\o)\big\}\big)=: (\l\x \cQ)(\cZ_i)=0,
\]
by Fubini's theorem. From this and Fubini's theorem again we learn
\be\la{eq4.17}
\l\left(\left\{a: \ \cQ\big(\big\{\o:\ (a,\o)\in \cZ_i\big\}\big)>0\right\}\right)=0.
\ee
By stationarity of $Z(\o)$, the probability
\[
\cQ\big(\big\{\o:\ (a,\o)\in \cZ_i\big\}\big),
\]
is independent of $a$. From this and \eqref{eq4.17} we deduce
\[
\cQ\big(\big\{\o:\ (a,\o)\in \cZ_i\big\}\big)=0,
\]
for every $a\in \bR$. In particular,
\[
\cQ\big(\big\{\o:\ \pi_iZ(\o)\cap\{0,1\}\neq \varnothing\big\}\big)=0,
\]
As a consequence,
\[
\cQ\big(\big\{\o:\ \pi_iZ(\o)\cap\{0,1\}\neq \varnothing\ {\text{ for some }}\ i\in\{1,\dots,n\}\big\}\big)=0.
\]
This is exactly the claim \eqref{eq4.16}.
 
\ms\noi
{\em(Step 3)} We are now ready to construct our sets $\O(\d),\ \d>0$.
Take $\o\in\O\setminus\big(\O_0\cup\O_1)$, and assume that
$N=N(\o)=\sharp Z(\o)\neq 0$. If 
\[
Z(\o)=\big\{a^1,\dots, a^N\big\},
\]
then we can use the (local) inverse mapping theorem to
find $\e(\o)>0$, and disjoint open sets 
\[
V_1(\o),\dots,V_N(\o)\subset [0,1]^n\cap U,
\]
 such that $a^i\in V_i(\o)$, $f(V_i(\o))={\rm B}_{\d(\o)}(0)$, and the restriction of
$f$ to each $V_i$ is a diffeomorphism for each $i$.
To have $\e(\o)$ a measurable function,
we choose $\e=\e(\o)$ to be the largest positive number for which such sets $V_1,\dots, V_N$ exist. We set 
\[
\O(\d)=\big\{\o\in \O\setminus\big(\O_0\cup\O_1):\ \e(\o)\geq \d\big\}.
\]
We now verify (i)-(iii) of {\em(Step 1)}.
The property (i) is an immediate consequence of \eqref{eq4.14} and \eqref{eq4.15}.
The property (ii) follows from the fact that $\cN_U(a,\o)=N$ for 
$u\in {\rm B}_{\d(\o)}(0)$.
To verify the third property, set
\[
W_\e(\o)=\big\{x:\ |f(x,\o)|\leq \e\big\}.
\]
When $\e<\d(\o)$, the right-hand side of \eqref{eq4.12} equals
\begin{align*}
\frac 1{|{\rm B}_{\e}(0)|}&
\sum_{i=1}^N\int_{V_i(\o)\cap W_\e(\o)}|\det {\rm D}f(x,\o)|\ {\rm d}x\\
&=\frac 1{|{\rm B}_{\e}(0)|}
\sum_{i=1}^N\big|f(V_i(\o)\cap W_\e(\o))\big|
=N,
\end{align*}
because $f(V_i(\o)\cap W_\e(\o))={\rm B}_{\e}(0)$. This completes the verification
of the third property.
\end{proof}

\ms
\begin{remark}
In Theorem~\ref{th4.2} we assumed that the law of the random variable
 $(\hat f,\pmb D\hat f)$ has a density $p(a,\G)$. This requirement can be replaced with the following two conditions:
\bi
\item The law of the random variable $\hat f$ has a density $p(a)$
that is continuous near $0$.
\item If $q(a,d\G)$ is the the conditional probability distribution of $\pmb D \hat f$,
given $\hat f=a$, then the function
\[
a\mapsto \int_{\bR^{n\x n}} |\det \G|\ q(a,{\rm d}\G),
\]
is continuous at $a=0$. We remark that if $(\hat f,\pmb D\hat f)$ has a density $p(a,\G)$, then $q$ in terms of $p$ is given by $q(a,\G)/p(a) {\rm d}G.$
\ei
\end{remark}

Theorem~\ref{th4.2} and Proposition~\ref{prop4.1} give us a way of counting the fixed points of 
a stationary symplectic twist diffeomorphism
provided that the conditions of Theorem~\ref{th4.2} are met.
 The conditions of this theorem are stated for the density of the pair
$$(f,\rm Df)=\big(\widehat\Phi-{\rm id}\ , \ \rm D\widehat \Phi-I\big).$$
In practice, we need conditions that are formulated for
the original symplectic map $\Phi$, not $\widehat \Phi$.
The following result will remedy this.

\bp\la{pro4.1}
Let $\Phi$ be as in Theorem~\ref{th3.3}. Assume that  
the pair $$(\Phi(x),{\rm D}\Phi(x))$$ has a density $\rho(x,X,\G)$ with respect
to the Lebesgue measure 
${\rm d}X\ {\rm d}\G$ of $\bR^{2d}\x\bR^{2d\x 2d}$.
Then the pair $(\widehat\Phi(x),{\rm D}\widehat\Phi(x))$ has a density 
$\hat\rho(\hat x,\hat X,\hat \G)$ with respect
to the Lebesgue measure ${\rm d}\hat X\ {\rm d}\hat\G$,
where
\be\la{eq4.18}
\hat\rho(\hat x,\hat X,\hat \G)=|\det \hat A|^{1-4d}
\rho(x,X,\cA(\hat\G)),
\ee
with $x=(q,p)$, $X=(Q,P)$, $\hat x=(Q,p)$, $\hat X=(q,P)$, and
\be\la{eq4.19}
\cA(\hat\G):=\begin{bmatrix}\hat A^{-1}&-\hat A^{-1}\hat B
\\ \hat C\hat A^{-1}&\hat D-\hat C\hat A^{-1}\hat B\end{bmatrix},\ \ \ \
{\textup{for }}\ \ \ \ \hat\G=\begin{bmatrix}\hat A&\hat B\\\hat C&\hat D\end{bmatrix}.
\ee
\ep

\begin{proof} {\em(Step 1)} We first find an expression for 
${\rm D}\widehat\Phi$ in terms of
${\rm D} \Phi$. From the definition of $\hat q(Q,p),$ and 
$$\hat P(Q,p)=P(\hat q(Q,p),p), $$ 
we learn
\begin{eqnarray}
\big(Q_q,Q_p\big) &=&\big(\hat q_Q^{-1},-\hat q_Q^{-1}\ \hat q_p\big), \nonumber \\
\big(P_q, P_p\big)&=&\big(\hat P_Q\ \hat q_Q^{-1}, 
 \hat P_p-\hat P_Q\ \hat q_Q^{-1}\ \hat q_p\big). \nonumber
\end{eqnarray}
From $$\widehat \Phi(Q,p)=\big(\hat q,\hat P\big)(Q,p)$$ we learn that if
\begin{align*}
\G:&={\rm D}\Phi=\begin{bmatrix}A&B\\C&D\end{bmatrix}:=\begin{bmatrix}Q_q&Q_p\\ P_q&P_p\end{bmatrix},\\
\hat\G:&=
{\rm D}\hat\Phi=\begin{bmatrix}\hat A&\hat B\\\hat C&\hat D\end{bmatrix}
:=\begin{bmatrix}\hat q_Q&\hat q_p\\ \hat P_Q&\hat P_p\end{bmatrix},
\end{align*}
then $\G=\cA(\hat\G)$, with $\cA(\hat\G)$ given by \eqref{eq4.19}.
Clearly,
\be\la{eq4.20}
\rho(x, X,\G)\ {\rm d}X\ {\rm d}\G=m(\hat x, \hat X,\hat \G)\ {\rm d}\hat X\ {\rm d}\hat\G,
\ee
where 
 $$m(\hat x, \hat X,\hat \G)=m(Q,p, q,P,\hat \G)=
\rho\big(q,p,Q,P,\cA(\hat \G)\big).$$
From 
$${\rm d}X= {\rm d}q\ {\rm d}p =\big|\det (\hat q_Q)\big|\ {\rm d}Q\ {\rm d}p,$$
we deduce
\be\la{eq4.21}
{\rm d}X=\big|\det \hat A\big| \ {\rm d}\hat X.
\ee

It remains  to express ${\rm d}\G$ in terms of ${\rm d}\hat\G$.
We write ${\rm d}A, \ {\rm d}B,\ {\rm d}C,$ and ${\rm d}D$ for the volume forms in
$\bR^{d\x d}$ associated with $A,\ B, \ C$ and $D$. 
Here by ${\rm d}A$ we really mean
\[
{\rm d}A=\big({\rm d}a_{11}\wedge {\rm d}a_{12}\wedge \dots\wedge {\rm d} a_{1d}\big)
\wedge\dots\wedge \big({\rm d}a_{d1}\wedge {\rm d}a_{12}\wedge \dots \wedge
{\rm d}a_{dd}\big),
\]
where $a_{ij}, \ i,j=1,\dots d,$ denote the entries of $A$. In the same manner
we define ${\rm d}B,\ {\rm d}C,$ and ${\rm d}D$.
Analogously
${\rm d}\hat A, \ {\rm d}\hat B, \ {\rm d}\hat C$, and ${\rm d}\hat D$ are defined.
On account of \eqref{eq4.20}, \eqref{eq4.21}, the proof is complete if we show 
\be\la{eq4.22}
({\rm d}A)\wedge ({\rm d}B)\wedge ({\rm d}C)
\wedge ({\rm d}D)=\pm
(\det \hat A)^{-4d}\ ({\rm d}\hat A)\wedge 
({\rm d}\hat B)\wedge ({\rm d}\hat C)\wedge ({\rm d}\hat D).
\ee
Note the equality in \eqref{eq4.22} is for volume forms, where as in \eqref{eq4.20}
the expressions $dX\ d\G$ and $d\hat X\ d\hat\G$ refer to the measures
in $\bR^{2d}\x \bR^{2d\x 2d}$. Since we are interested in the probability densities of our random variables, we do not keep
track of signs of the volume forms that will appear in the subsequent calculations.

\ms\noi
{\em(Step 2)} We first try to express $${\rm d}A={\rm d}(\hat A^{-1})$$ in terms of 
${\rm d}\hat A$. By our Lemma~\ref{lem4.1} below, we have
\be\la{eq4.23}
{\rm d}A=\pm (\det \hat A)^{-2d}\ {\rm d}\hat A.
\ee
We next study 
\[
({\rm d}A)\wedge ({\rm d}B)=\big({\rm d}\hat A^{-1}\big)\wedge {\rm d}\big(-\hat A^{-1}\hat B\big).
\]
As we apply the exterior derivative on $\hat A^{-1}\hat B=A\hat B$, we can 
treat $A^{-1}$ as a constant because of the wedge
product with $\rm d\hat A$ (we are using ${\rm d}\hat a^{ij}\wedge {\rm d}\hat a^{ij}=0$).

Lemma~\ref{lem4.1} below allows us write 
\be\la{eq4.24}
{\rm d}(E \hat B)=\pm(\det E)^d\ {\rm d}\hat B,
\ee
for a constant matrix $E$.  Hence
\begin{align*}
({\rm d}A)\wedge {\rm d}B)&=\pm (\det \hat A)^{-2d}\ ({\rm d}\hat A)\wedge
\big({\rm d}\big(\hat A^{-1}\hat B\big)\big)\\
&=\pm (\det \hat A)^{-2d}\ \left(\det\big(\hat A^{-1}\big)\right)^{d}
({\rm d}\hat A)\wedge
({\rm d}\hat B)\\
&=\pm (\det \hat A)^{-3d}\ ({\rm d}\hat A)\wedge
({\rm d}\hat B).
\end{align*}
In the same fashion,
\[
({\rm d}A)\wedge ({\rm d}B)\wedge ({\rm d}C)=\pm (\det \hat A)^{-4d}\ ({\rm d}\hat A)\wedge
({\rm d}\hat B)\wedge ({\rm d}\hat C).
\]
From this and 
\[
({\rm d}A)\wedge ({\rm d}B)\wedge ({\rm d}C)\wedge ({\rm d}D)=
({\rm d}A)\wedge ({\rm d}B)\wedge ({\rm d}C)\wedge ({\rm d}\hat D),
\]
we can readily derive \eqref{eq4.22}.
\end{proof}

\ms
It remains to verify \eqref{eq4.23} and \eqref{eq4.24}.

\ms
\begin{lemma}\la{lem4.1}
The following statements hold.
\begin{itemize}
\item[(i)] Given $E\in\bR^{d\x d}$, consider the map 
$$\z \colon \bR^{d\x d}\to \bR^{d\x d},$$ such that $\z(Z)=EZ.$
If we write ${\rm d}Z$ for the volume form of $\bR^{d\x d}$, then 
\be\la{eq4.25}
\z^*({\rm d}Z)= \pm(\det E)^{d}\ {\rm d}Z. 
\ee
A similar formula is valid if $\z(Z)=ZE$.
\item[(ii)] If $\eta(Z)=Z^{-1}$, then 
\be\la{eq4.26}\eta^*({\rm d}Z)=\pm(\det Z)^{-2d}\ {\rm d}Z.\ee
\end{itemize}
\end{lemma}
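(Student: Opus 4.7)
The plan is to view $\bR^{d\times d}$ as $\bR^{d^2}$ so that the maps $\z$ and $\eta$ become ordinary smooth maps between Euclidean spaces, and then pull back the volume form by computing Jacobian determinants. Since the signs are absorbed into the $\pm$ on the right-hand side of \eqref{eq4.25} and \eqref{eq4.26}, the only real work is to compute $|\det D\z|$ and $|\det D\eta|$ at a generic point.

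For part (i), the map $\z(Z) = EZ$ is linear in $Z$, so its derivative at every point is $\z$ itself. Identify $Z$ with its columns: $Z = [z_1 | z_2 | \cdots | z_d]$ with $z_j \in \bR^d$. Then $EZ = [Ez_1 | \cdots | Ez_d]$, so as a linear map on $\bR^{d^2} = \bR^d \oplus \cdots \oplus \bR^d$ (one summand per column), $\z$ is block-diagonal with $d$ identical diagonal blocks equal to $E$. Hence $\det \z = (\det E)^d$, which gives \eqref{eq4.25}. For $\z(Z) = ZE$ the argument is symmetric: identify $Z$ with its rows and the linear map becomes block-diagonal with $d$ copies of $E^{\top}$ on the diagonal, yielding the same determinant $(\det E)^d$.

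For part (ii), I would use the standard identity
\[
{\rm d}(Z^{-1}) = -Z^{-1}\,({\rm d}Z)\,Z^{-1},
\]
which is obtained by differentiating $Z Z^{-1} = I$. Thus the differential $D\eta_Z \colon \bR^{d\times d} \to \bR^{d\times d}$ is the linear map $H \mapsto -Z^{-1} H Z^{-1}$, i.e.\ the composition of left multiplication by $-Z^{-1}$ and right multiplication by $Z^{-1}$. By part (i), each of these multiplications contributes a Jacobian factor of $\pm(\det Z^{-1})^d = \pm (\det Z)^{-d}$, so composing gives $\pm(\det Z)^{-2d}$, which is precisely \eqref{eq4.26}.

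There is essentially no obstacle beyond bookkeeping: the main point is the correct identification of $\bR^{d\times d}$ with $\bR^{d^2}$ so that left (respectively right) multiplication by $E$ becomes a block-diagonal linear map with $d$ identical blocks. The sign ambiguity in the statement means we do not have to track orientations carefully, so both parts reduce to the standard determinant computation for multiplication maps on a matrix space.
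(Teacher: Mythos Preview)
Your proof is correct. For part~(i) your block-diagonal argument is exactly the paper's wedge-product computation said in linear-algebra language, so there is no real difference. For part~(ii), however, you take a genuinely different route: you differentiate the identity $ZZ^{-1}=I$ to obtain $D\eta_Z(H)=-Z^{-1}HZ^{-1}$ and then factor this as a left multiplication composed with a right multiplication, each handled by part~(i). The paper instead argues indirectly via a functional equation: writing $\eta^*({\rm d}Z)=\alpha(Z)\,{\rm d}Z$, it uses the intertwining relation $\eta\circ\zeta=\xi\circ\eta$ (where $\zeta(Z)=EZ$ and $\xi(Z)=ZE^{-1}$) together with part~(i) to deduce $\alpha(EZ)=\pm(\det E)^{-2d}\alpha(Z)$, and then uses $\eta\circ\eta={\rm id}$ to pin down $\alpha(I)=\pm 1$. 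Your approach is more direct and self-contained; the paper's approach never actually computes $D\eta$ and relies only on the group-theoretic behaviour of inversion, which is elegant but slightly more roundabout here.
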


\begin{proof}
(i) Given $$Z=[z_{ij}]_{i,j=1}^d,\,\,\, \, \, \, \, EZ=[\z_{ij}]_{i,j=1}^d,$$
we can write
\begin{eqnarray}
\bigwedge_{j=1}^d\  \bigwedge_{i=1}^d\  {\rm d}\z_{ij}
&=& \bigwedge_{j=1}^d\  \bigwedge_{i=1}^d \left(\sum_{k=1}^d e_{ik}
\ {\rm d}z_{kj}\right) \nonumber \\
&=& \bigwedge_{j=1}^d\  \sum_{\s\in S_d}\prod_{i=1}^d e_{ i\s(i)}\ 
\bigwedge_{j=1}^d {\rm d}z_{\s(i)j}   \nonumber \\
&=&\bigwedge_{j=1}^d\  \sum_{\s\in S_d}\e(\s)\prod_{k=1}^d e_{i \s(i)}\ 
\bigwedge_{i=1}^d {\rm d}z_{ij} \nonumber \\
&=&(\det E)^d\bigwedge_{j=1}^d\  
\bigwedge_{i=1}^d {\rm d}z_{ij} ,\nonumber 
\end{eqnarray}
where $S_d$ denotes the set of permutations of $\{1,\dots,d\}$.
This completes the proof of \eqref{eq4.25}.

\ms\noi
(ii) Let us write $\eta^*({\rm d}Z)=\a(Z)\ {\rm d}Z$. Fix $E$ as in part (i).
Observe
\begin{eqnarray}
(\eta\circ\z)^* ({\rm d}Z)&=&\z^*\eta^*({\rm d}Z) \nonumber \\
&=& \zeta^*\big(\a(Z){\rm d}Z\big) \nonumber \\
&=&\pm (\det E)^d\ \a(EZ)\ {\rm d}Z. \la{eq4.27}
\end{eqnarray}
On the other hand, if we set $\xi(Z)=ZE^{-1}$, then
$\eta\circ \z=\xi\circ \eta$, and 
\begin{eqnarray}
(\xi\circ\eta)^* (dZ)&=&\eta^*\xi^*({\rm d}Z) \nonumber \\
&=&\pm (\det E)^{-d}\ \eta^*({\rm d}Z) \nonumber \\
&=&(\det E)^{-d}\ \a(Z)\ {\rm d}Z. \nonumber
\end{eqnarray}
From this and \eqref{eq4.27} we deduce
\[
\a(EZ)=\pm (\det E)^{-2d}\ \a(Z),
\]
which in turn implies that 
\be\la{eq4.28}
a(E)=(\det E)^{-2d}\a({\rm I}),
\ee
where $\rm I=\rm I_d$ denotes the $d\x d$ identity matrix.
Furthermore, since $\eta\circ\eta={\rm id}$, we know
\begin{eqnarray}
{\rm d}Z&=& (\eta\circ\eta)^*({\rm d}Z) \nonumber \\
&=& \eta^*\big(\a(Z)\ {\rm d}Z\big)  \nonumber \\
&=&\a(Z^{-1})\a^*({\rm d}Z) \nonumber \\
&=&\a(Z^{-1}) 
\a(Z)\ {\rm d}Z, \nonumber 
\end{eqnarray}
which in particular implies that $\a(\rm I)^2=1$. From this and \eqref{eq4.28} we can readily deduce \eqref{eq4.26}.
\end{proof}

\ms
\begin{remark} We express our formula \eqref{eq4.18} in terms of the density 
of $\big(\Phi(x) , \rm D\Phi(x)\big)$ with no reference to the stationarity of the process
$\o=\cF(\Phi)$. In fact, if the law of pair $(\o(x),\rm D\o(x))$ with respect to the measure $\cQ$ has a density $\g(X,\G)$, then it does not depend on $x$ by stationarity, and $\rho$ can be expressed in terms of $\g$ by the following formula:
\[
\rho(x, X, \G)=\g(X-x, \G-\rm I),
\]
where $I$ is $(2d)\x(2d)$ identity matrix.
\end{remark}

\section{The density of fixed points in the case of quasiperiodic maps} \label{qsp}

Theorem~\ref{th3.3} and Proposition~\ref{prop4.1} reduce the counting of the fixed points to the evaluation of $\bE\cN_U(0,\o)$. 

If we take the expected value of both sides
of \eqref{eq4.10}, and use the stationarity we always have
\be\la{eq5.1}
\int \z(a)\big[\bE \cN_U(a,\o)\big]\ da=\int
\z(\hat f(\o))\ 1\!\!1\big(\pmb D\hat f(\o)\in U\big)\ 
|\det \pmb D\hat f(\o)|\ \cQ({\rm d}\o),
\ee
for every bounded continuous function $\z$. (See Definition~\ref{def3.1}(i)
for the definition of $\pmb D$.)

If we can choose $\z$ to be the delta function at $0$, then we have the 
informal expression \eqref{eq4.7} for $\bE\cN_U(0,\o)$. 
Theorem~\ref{th4.2} offers an explicit formula for the right-hand side of \eqref{eq4.7} in terms of the density of the pair 
$(\hat f,{\pmb D}\hat f)).$
The existence of a density is rather a restrictive requirement and not valid for many examples of interest. 
In this section, we offer a new explicit formula for $\bE\cN_U(0,\o)$
when $\Phi$ is quasi periodic.

Recall that $\hat f=\widehat {\pmb \nabla} \hat w$. To simplify our notation, we may instead consider the zero set of the function $\hat g=\pmb \nabla \hat w$.
Before stating our first main result, let us review 
the setting we will be working with.

\ms\noi
{\bf Setting 5.1} Given a ${\rm C}^2$ function $$\hat w:\bT^N\to\bR,$$
and $N\x n$ matrix $A$, define 
 $$w(x)=w(x,\o)=\hat w(\Theta_{Ax} \o),$$ 
$\hat g=\pmb\nabla \hat w$, and  $g=\nabla w$. 
We assume that the $n\x n$ matrix $$E:=A^*A$$ is of full rank.
Let us write 
\[
\cN_U(a,\o)=\sharp\left\{x\in[0,1]^{n}:\ g(x,\o)=a,\
{\rm D}g(x,\o))\in U\right\}.
\]
Observe 
\be\la{eq5.2}
\hat g(\o)=\nabla \hat w(\o)A,\ \ \ \ {\pmb D} \hat g(\o)=A^* {\rm D^2}\hat w(\o)A,
\ee
where $\nabla \hat w $ and ${\rm D} \hat g$ represent the standard derivatives 
of $\hat w$ and $\hat g$ (as opposed to the $\pmb\nabla$ and $\pmb D$ which
denote the differentiation in the sense of Definition~\ref{def3.1}(i)).
We write $\cQ$ for the Lebesgue measure on $\bT^N$.
\qed

\ms
We are now ready to present our formula for $\bE \cN_U(0,\o)$,
namely \eqref{eq5.3} below. The proof of \eqref{eq5.3} is similar to the proof of \eqref{eq4.9}. One of the main tool we use is the celebrated Coarea
Formula that we now recall; given a measurable set $V$, a ${\rm C}^1$ 
function $$S:\bR^N\to\bR^n,$$ with $N>n$,
and a non-negative measurable function $T:\bR^N\to\bR$, we have
\be\la{eq5.3}
\int_V T(x)\ (\cJ S)(x)\ {\rm d}x=\int_{\bR^n}\left[\int_{V\cap S^{-1}(a)} T\ {\rm d}\s_{N-n}
\right]\ {\rm d}a,
\ee
where $$(\cJ S)(x)=\big(\det\big(({\rm D}S)(x)({\rm D}S)^*(x)\big)\big)^{1/2},$$
(here $A^*$ denotes the transpose of $A$), and $\s_{N-n}$ denotes the $N-n$ dimensional (Hausdorff) measure. 
For our purposes, we wish to choose
\[
T(x)=W(x) (\cJ S)(x)^{-1},
\]
in \eqref{eq5.3}. This function is well-defined so long as
\[
V\subset \Sigma:=\big\{X\in\bR^N:\ (\cJ S)(x)\neq 0\big\}.
\]
For such a choice of $T$, \eqref{eq5.3} reads as
\be\la{eq5.4}
\int_V W(x)\ {\rm d}x=\int_{\bR^n}\left[\int_{V\cap S^{-1}(a)} W
(\cJ S)^{-1}\ {\rm d}\s_{N-n}.
\right]\ {\rm d}a,
\ee

\ms
We are now ready to state and prove the main result of this section.

\bth\la{th5.1} Let $\hat w$ and $\hat g$ be as in Setting 5.1. Then
\be\la{eq5.5} 
\bE \cN_U(0,\o)=\int_{\bT^N}\cN_U(0,\o)\ {\rm d}\o=
\int_{\L(0)}\frac{\big|\det(A^*{\rm D}^2\hat w(\o)A)\big|}
{\det(A^*({\rm D}^2\hat w(\o))^2 A)^{1/2}}\ \s_{N-n}({\rm d}\o),
\ee
where,
\[
\L(a)=\big\{\o\in\bT^N:\ \nabla\hat  w(\o)A=a,\ A^*{\rm D}^2\hat w(\o)A\in U,\ \det\big(A^*{\rm D}^2\hat w(\bar\o)A\big)\neq 0\big\}.
\]
Moreover, if  there exists $\bar\o$ such that 
$$ \nabla\hat w(\bar\o)A=0,\ \ \ \ A^*{\rm D}^2\hat w(\bar\o)A\in U, $$
and $$A^*{\rm D}^2\hat w(\bar\o)A$$ is invertible,  then
the right-hand side of \eqref{eq5.4} is nonzero.
\et

\begin{proof} {\em(Step 1)}
We may apply the area formula to the function $g$ to derive 
the analogue of \eqref{eq5.1}, 
\be\la{eq5.6}
\int_{\bR^n} \z(a)\big[\bE \cN_U(a,\o)\big]\ {\rm d}a=\int_{\bT^N}
\z(\hat g(\o))\ 1\!\!1\big(\pmb D\hat g(\o)\in U\big)\ 
|\det \pmb D\hat g(\o)|\ \rm d\o,
\ee
for every continuous function $\z$. We wish apply Coarea Formula \eqref{eq5.4}
to the right-hand side of \eqref{eq5.6}, for the choices of 
\begin{align}\nonumber
&S(\o)= \hat g(\o),\ \ \ \ W(\o)=\z(\hat g(\o))\ ,\\
&V=\big\{\o:\ \pmb D\hat g(\o)\in U\big\}\cap \Sigma,\la{eq5.7}
\end{align}
where
\[
\Sigma=\big\{\o\in\bT^N:\ (\cJ \hat g)(\o)\neq 0\big\}.
\]
Observe , 
\[
{\rm D} \hat g(\o)=A^* {\rm D}^2\hat w(\o),\ \ \ \ (\cJ \hat g)(\o)=\det(A^*({\rm D}^2\hat w(\o))^2 A)^{1/2}.
\]
On the other hand, if $\o\notin\Sigma$, then there exists a nonzero vector 
$b$ such that $A^*C^2Ab=0$, for $C={\rm D}^2 \hat w(\o)$. As a consequence,
$A^*CA b=0$ because
\[
0=A^*C^2Ab\cdot b=|CAb|^2.
\]
From this we learn
\[
\big\{x\in\bT^N:\ \det(A^*{\rm D}^2\hat w(\o)A)\neq 0\big\}\subset \Sigma.
\]
Because of this, the right-hand side of \eqref{eq5.6} equals to
\be\la{eq5.8}
\int_{\Sigma}
\z(\hat g(\o))\ 1\!\!1\big(\pmb D\hat g(\o)\in U\big)\ 
|\det \pmb D\hat g(\o)|\ \rm d\o.
\ee
 We now apply  Coarea Formula \eqref{eq5.4} to \eqref{eq5.8},
for the choices of \eqref{eq5.7}, to assert
\be\la{eq5.9}
\int_{\bR^n} \z(a)\big[\bE \cN_U(a,\o)\big]\ {\rm d}a=\int_{\bR^n}
\z(a)\ \left[ \int_{\L(a)}\frac{\big|\det(A^*{\rm D}^2\hat w(\o)A)\big|}
{\det(A^*({\rm D}^2\hat w(\o))^2 A)^{1/2}}\ \s_{N-n}({\rm d}\o)\right]\ 
{\rm d}a.
\ee
From this we deduce
\be\la{eq5.10}
\bE \cN_U(a,\o)=
 \int_{\L(a)}\frac{\big|\det(A^*{\rm D}^2\hat w(\o)A)\big|}
{\det(A^*({\rm D}^2\hat f(\o))^2 A)^{1/2}}\ \s_{N-n}({\rm d}\o)=:G(a),
\ee
for Lebesgue almost all $a\in\bR^n$. We wish to show that \eqref{eq5.7} holds
for {\em all} $a$. We achieve this by verifying the continuity of the function $G$,
and a repetition of some of the steps of the proof of Theorem~\ref{th4.2}.

\ms\noi
{\em(Step 2)} With a verbatim argument as in the proof of Theorem~\ref{th4.2}, we can show that there exists a collection of measurable sets 
$\big\{\O(\d):\ \d>0\}$, with $\O(\d)\subset\O=\bT^N$, such that 
the following statements hold:
\bi
\item [(i)] $\lim_{\d\to 0}\cQ\big(\O(\d)\big)=1$.
\item [(ii)] If $\o\in\O(\d)$, and $|u|\leq\d$, then $\cN_U(u,\o)=\cN_U(0,\o).$
\item [(iii)] If $\o\in\O(\d),$ and $\e\in(0,\d)$, then
\be
 \cN_U(0,\o)=\frac 1{|{\rm B}_{\e}(0)|}
\int_{[0,1]^n}1\!\!1(|g(x,\o)|\leq  \e)\ 1\!\!1\big({\rm D}g(x,\o)\in U\big)\ 
|\det {\rm D}g(x,\o)|\ {\rm d}x.\la{eq5.11}
\ee\ei
Let us demonstrate how the continuity of $G$, 
 (i)-(iii), and \eqref{eq5.10} for almost all $a$, imply that \eqref{eq5.10}
holds for $a=0$ (continuity at any other $a$ can be shown in exactly the same way).

As in the proof of Theorem~\ref{th4.2} we use \eqref{eq5.11} to assert that 
when $\e\in(0,\d)$,
the expression
\[
\bE\ \cN_U(0,\o)1\!\!1\big(\o\in \O(\d)\big),
\]
equals
\begin{align*}
\bE \ &\frac 1{|{\rm B}_{\e}(0)|}
\int_{[0,1]^n}1\!\!1\big(|g(x,\o)|\leq  \e,\ {\rm D}g(x,\o)\in U\big)\ 
|\det {\rm D}g(x,\o)|\ {\rm d}x\ 1\!\!1\big(\o\in \O(\d)\big)\\
&\leq\bE \ \frac 1{|{\rm B}_{\e}(0)|}
\int_{[0,1]^n}\ 1\!\!1\big(|g(x,\o)|\leq  \e,\ {\rm D}g(x,\o)\in U\big)\  |\det {\rm D}g(x,\o)|\ {\rm d}x\\
&=\frac 1{|{\rm B}_{\e}(0)|}\bE \  1\!\!1\big(|g(0,\o)|\leq  \e,\ {\rm D}g(0,\o)\in U\big)\ 
|\det {\rm D}g(0,\o)|\\
&=\frac 1{|{\rm B}_{\e}(0)|}\int_{\bT^N}   1\!\!1\big(|\hat g(\o)|\leq  \e,\ {\pmb D}
\hat g(\o)\in U\big)\ 
|\det {\pmb D}\hat g(\o)| \ {\rm d}\o\\
&=\frac 1{|{\rm B}_{\e}(0)|}\int_{{\rm B}_\e(0)} G(b)\ \rm db,
\end{align*}
where we used the stationarity for the first equality, and Coarea Formula for the last equality. 

We then send $\e\to 0$,
and $\d\to 0$ ( in this order), and use the continuity of $G$ to deduce
\be\la{eq5.12}
\bE\ \cN_U(0,\o)\leq  G(0) .
\ee
On the other-hand, by the validity of \eqref{eq5.10} for almost all points, we can find a sequence $a_k\to 0$
such that
\be\la{eq5.13}
\bE\ \cN_U(a_k,\o)= G(a_k) .
\ee
As in the proof of Theorem~\ref{th4.2}, we use \eqref{eq5.12}, the continuity of $G$, \eqref{eq5.13}, and  (i)-(ii),
to argue
\begin{eqnarray}
\bE\ \cN_U(0,\o)&\leq&  G(0)=\lim_{k\to\i}G(a_k)  \nonumber \\
&=&\lim_{k\to\i}\bE\ \cN_U(a_k,\o) \nonumber \\
&=&\lim_{k\to\i}\lim_{\d\to 0} \bE\ \cN_U(a_k,\o)\ 1\!\!1\big(\o\in \O(\d)\big) \nonumber \\
&=&\lim_{k\to\i}\lim_{\d\to 0}
\bE\ \cN_U(0,\o)\ 1\!\!1\big(\o\in \O(\d)\big) \nonumber \\
&=& \bE\ \cN_U(0,\o). \nonumber
\end{eqnarray}
Since we must have equality for the inequalities in the above display, 
we arrive at
\[
\bE\ \cN_U(0,\o)=G(0),
\]
 which is \eqref{eq5.5}.
 It remains to verify the continuity of the function $G$.

\ms\noi
{\em(Step 3)} As a preparation for the proof of the continuity, we first study
the level set $\L(a)$, which is a subset of $\Sigma$.
Observe that if $\o\in\Sigma$, then 
\[
\det \big(A^*C^2A\big)=\det (M^*M)\neq 0,
\]
where $C=C(\o)=D^2 \hat w$ as before, and $M=CA$. Note that 
$M\in \bR^{N\x n}$, with $N>n$. Hence, we may apply  Cauchy-Binet Formula,
to write
\be\la{eq5.14}
\det (M^*M)=\sum_{I\in \cI}(\det M_I )^2,
\ee
where $\cI$ denotes  the collection of sets 
$I\subset \{1,2,\dots,N\}=:[N]$ such that $\sharp I=n$,
and for $$M=[m_{ij}]_{i\in[N],j\in[n]},$$ by $M_I$ we mean the $n\x n$ submatrix 
of $M$, given by
\[
M_I=[m_{ij}]_{i\in I,j\in[n]}.
\]
Let us write $$\o=(\o_1,\dots,\o_N)$$ for the coordinates of $\o\in\bT^N$
(regarding $\bT^N=[0,1]^N$, with $0=1$).
We also write 
\[
\nabla_I=\left(\frac{\p}{\p \o_j}:\ j\in I\right),
\]
and 
$\Sigma_I$ for the set of $\o\in \bT^N$ such that  
\begin{align*}
{\nabla}_I\hat g(\o)&=\nabla_I\nabla \hat w A=
\left[\sum_{k=1}^N\hat w_{\o_i\o_k}a_{kj}\right]_{i\in I,j\in [n]}=[M_{ij}]_{i\in I,j\in [n]}=M_I,
\end{align*} 
is invertible. From \eqref{eq5.14} we learn
\be\la{eq5.15}
\Sigma=\cup_{I\in\cI}\Sigma_I.
\ee
We now examine the set $\L(a)\cap \Sigma_I$, for each $I\in\cI$.

Without loss of generality, we may assume that $I=[n]$.
Let us examine the set $\Sigma_{I}$, when $I= [n]$.
Regarding $\o\in\bT^N$, as a point in $[0,1]^N$, 
we may write $\o=(\o^1,\o^2)\in \bR^{n}
\x\bR^{N-n}$. For $\o\in\Sigma_{[n]}$, we know  
$${\nabla}_{\o^1}\hat g(\o)={\p}^2_{\o^1\o}\hat w(\o)A$$ 
is invertible. Fix 
\[
\bar\o=(\bar \o^1,\bar\o^2)\in \Sigma_{[n]}\cap \L(a).
\]
If we define $$F(\o^1,\o^2)=(\hat g(\o),\o^2),$$ then ${\rm D}F(\bar\o)$ is invertible, and for $b$ in a neighborhood of  
\[
\bar b=(\hat g (\bar \o),\bar\o^2)=(a,\bar \o^2),
\]
we can define $F^{-1}(b)$. This, and the compactness of $\L(a)$ allow us to find an open covering 
\[
\Sigma_I\cap \L(a)\subset \bigcup_{i=1}^{\a(I)}\Sigma_{I}^i,
\] 
such that for every $i\in\{1,\dots,\a(I)\}$, 
\[
F(\Sigma_I^i)={\rm B}_{\d_i}(a)\x U_i^I,
\]
for some $\d_i>0$, and some open set $U_i^I\subset \bR^{N-n}$, 
and we can make sense of 
$$F^{-1}(a,\o^2)=(R_{I}^i(a,\o^2),\o^2),$$ with 
\[
R_I^i:{\rm B}_{\d_i}(a)\x U_I^i\to
\bR^{n},
\]
a ${\rm C}^1$-function. In other words,
$$F\big(R_I^i(a,\o^2),\o^2\big)=a,$$ and the graph of $R_I^i(a,\cdot)$
yields a parametrization of $\L(a)\cap \Sigma_I^i$.

To ease the notation, let us write $X:\bT^N\to\bR$, for the integrand 
of the integral that appeared in the definition of $G$ in \eqref{eq5.10}.
We may use a partition of unity 
\[
\{\var_I^i:\ i=1,\dots,\a(I),\ I\in\cI\},
\]
associated with the covering 
$\big\{\Sigma_I^i:\ i=1,\dots,\a(I),\ I\in\cI\big\}$, to write
\begin{align*}
G(a)=&\int_{\L(a)}X(\o)\ \s_{N-n}({\rm d}\o)\\
=&\sum_{I\in\cI}\sum_{i=1}^{\a(I)}
\int_{\L(a)\cap \Sigma^i_I}\big(\var_I^iX\big)(\o)\ \s_{N-n}({\rm d}\o)\\
=&\sum_{I\in\cI}\sum_{i=1}^{\a(I)}\int_{U^i_I}\big(\var_I^iX\big)\big(R_I^i(a,\o^2),\o^2\big)\ J_I^i(a,\o^2)\ {\rm d}\o^2,
\end{align*}
where $J^i_I$ is the corresponding Jacobian factor:
$$
J_I^i(a,\o^2)=\det \big(\rm I_{N-n}+E^i_I(a,\o_2\big))\big)^{1/2},
$$
where $\rm I_{N-n}$ is the identity matrix of $\bR^{N-n}$,
and $E^i_I\in \bR^{(N-n)\x(N-n)}$ is a matrix with $(r,s)$ entry
\[
\big(E^i_I\big)_{rs}=\frac{\p R_I^i}{\p \o_2^r}\cdot
 \frac{\p R_I^i}{\p \o_2^s}.
\]
From this representation,
 it is not hard to deduce the continuity of the map $a\mapsto G(a)$.

 Finally observe that when $a=0$, and
${\rm D}F(\bar \o)$ is invertible, then $\L(0)$ contains an $N-n$ dimensional 
surface (namely the graph of $R^i_I(0,\cdot)$). This implies that the right-hand side of \eqref{eq5.5} is not zero.
\end{proof}

Formula \eqref{eq5.5} offers an explicit expression for the fix points density
when $\cF(\widehat\Phi)=\widehat\Phi-{\rm id}$ is quasiperiodic. We still need to show that indeed a quasiperiodic $\cF(\Phi)$ yields a quaiperiodic $\cF(\widehat\Phi)$.
For this we need a refinement of Proposition~\ref{pro3.1}(i).

\ms
\bp\la{pro5.1} Assume that  $x\mapsto \Phi(x,\o)$ is a symplectic twist diffeomorphism such that $\Phi(x,\o)=x+K(\Theta_{Ax}\o)$, for a continuous function $K:\bT^N\to\bR^{2d}$. 
Then $\cF(\widehat\Phi)$ is quasiperiodic.
\ep

\begin{proof} Recall $\th_{(q,p)}=\eta_p\circ\tau_q=\tau_q\circ\eta_p$.
We also write
\[
\Phi(q,p,\o)=\big(q+\a(\tau_q\eta_p\o),p+\b(\tau_q\eta_p\o)\big).
\]
The twist condition means that the map
\[
q\mapsto \g(q,\o):=q+\a(\tau_q\o),
\]
is a diffeomorphism. If we write $\g^{-1}(Q,\o)$ for its inverse, and set
\be\la{eq5.16}
\hat\a(\o)=\g^{-1}(0,\o),
\ee
then we can then write
\[
\hat\a(\o)+\a\big(\tau_{\hat\a(\o)}\ \o\big)=0.
\]
From
\[
q+\a\big(\tau_q\o\big)=Q\ \ \ \Leftrightarrow  \ \ \ 
q-Q+\a\big(\tau_{q-Q}\tau_Q\o\big)=0,
\]
we deduce 
\[
\g^{-1}(Q,\o)=q=Q+\hat\a\big(\tau_Q\o\big).
\]
From this and the definition of $(\hat  q,\hat P)$ we learn
\begin{align*}
\hat q(Q,p)&=Q+\hat\a\big(\tau_Q\eta_p\o\big)
=Q+\hat\a\big(\th_{\hat x}\o\big),\\
\hat P(Q,p)&=P\big(\hat q(Q,p),p\big)=p+\b\big(\tau_Q\eta_p\tau_{\hat\a(\th_{\hat x}\o)}\o\big)\\
&=p+\hat\b\big(\th_{\hat x}\ \o\big),
\end{align*}
where $\hat x=(Q,p)$, and 
\be\la{eq5.17}
\hat\b(\o)=\b\big(\tau_{\hat\a(\o)}\o\big).
\ee
In summary, 
\be\la{eq5.18}
\cF(\widehat\Phi)(\hat x,\o)=\g(\th_{\hat x}\o), \ \ \ {\text{ where }}\ \ \ 
\g=(\hat\a,\hat\b),
\ee
with $\hat\a$ and $\hat\b$ as in \eqref{eq5.16} and \eqref{eq5.17}.

We now apply our general formula \eqref{eq5.18} to the case of a quasiperiodic
$\cF(\Phi)$. In this case, $\o\in\bT^N$, 
$\a,\b:\bT^N\to\bR^d$ are two continuous functions, and we have
 two $N\x d$ matrices $A^1$ and $A^2$, such that $A=[A^1,A^2]$, and
\begin{align*}
\tau_q\o&=\Theta_{A^1 q}\ \o=\o+A^1q\ \mod 1,\\
\eta_p\o&=\Theta_{A^2 p}\ \o=\o+A^2 p\ \mod 1.
\end{align*}
Analogously $\g=(\hat\a,\hat\b):\bT^N\to\bR^{2d}$ is a continuous
function such that \eqref{eq5.18} holds. This certainly implies the quasiperiodicity
of $\cF(\widehat\Phi)$.
\end{proof}

\subsection{Proof of Theorem~\ref{MGT}}
We are now ready to offer a more precise statement of Theorem~\ref{MGT}
and give a proof.

\bth\la{th5.2} Assume that  $x\mapsto \Phi(x,\o)$ is a ${\rm C}^2$ symplectic twist diffeomorphism such that $$\Phi(x,\o)=x+K(\Theta_{Ax}\o),$$ for a ${\rm C}^1$ function $K:\bT^N\to\bR^{2d}$. Let $\cQ$ denotes the Lebesgue measure
on $\bT^N$, and assume that $K\in \widehat{\mathfrak{H}}^{-1}(\cQ)$,
and 
\be\la{eq5.19}
\int_{\bT^N}K\ {\rm d}\cQ=0.
\ee
Then the set
\[
\big\{x\in\bR^{2d}:\ \Phi(x,\o)=x\big\},
\]
is of positive (possibly infinite) density, $\cQ$-almost surely.
\et

\begin{proof}
Our regularity assumption $K\in \widehat{ \mathfrak{H}}^{-1}(\cQ)\cap {\rm C}^1$, and
\eqref{eq5.19} allow us to apply Theorem~\ref{th3.3} to deduce the existence 
of a stationary generating function $w(x,\o)=\hat w(\th_x\o)$. By 
Proposition~\ref{pro5.1}, The map 
\[
\cF(\widehat \Phi)=\widehat \nabla w,
\]
is quasiperiodic. As we illustrated in Example~\ref{Example3.6}, the quasiperiodicity
of $\widehat \nabla w$ implies the quasiperiodicity of $w(x,\o)$. Proposition~\ref{pro4.1} guarantees the $\cQ$-almost sure existence of a density \eqref{eq4.2} for the set $Z(\o)$. When the right-hand side of \eqref{eq4.2} is infinite, there is nothing to prove. When the right-hand side of \eqref{eq4.2} is finite, we apply Theorem~\ref{th5.1} to find an explicit expression given by \eqref{eq5.5}
for the density. By choosing $U$ to be the set of all symmetric matrices (or even 
the set of positive or negative matrices), we can guarantee the positivity of the density of the set $Z(\o),$ $\cQ$-almost surely.
\end{proof}

\ms
\begin{remark} We refer to Example~\ref{Example3.6} for sufficient conditions that
would guarantee $K\in  \widehat{\mathfrak{H}}^{-1}(\cQ)$.
\end{remark}

\section{Stationary Hamiltonian ODEs}\la{ode}

In this section we study the time one map $\phi^H$ for a Hamiltonian function 
that is selected randomly according to a $\th$-invariant probability measure
$\bP$ on $\cH$. As it is well-known, there is a one-to-one correspondence between
$1$-periodic orbits of the Hamiltonian vector field $X_H(x,t)=J\nabla H(x,t) $
and the fixed points of $\phi^H$. The map $H\mapsto\phi^H$
pushes forward $\bP$ to a probability measure $\cP$ on $\cS$. To count the fixed points of $\phi^H$, we wish to apply Theorem~\ref{th3.3}. For this, we need 
to make some regularity assumptions on $H$, and verify
the applicability of Theorem~\ref{th3.3}. Let us first make a useful definition concerning the regularity of Hamiltonian functions.

\begin{definition}
Let us write $\cC^2(\ell)$ for the set of continuous
 maps $$H:\bR^n\x \bR\to\bR$$ such that $H$ is twice differentiable in $x$,
and 
\be\la{eq6.1}
\|{\nabla}_x H\|_{{\rm C}^0},\ \|{\rm D}_x^2 H\|_{{\rm C}^0}\leq \ell.
\ee
To ease the notation, we will write $\nabla$ and $\rm D$ for $\nabla_x$ 
and $\rm D_x$, respectively.
\end{definition}

\ms
In the next Proposition, we verify various properties of $\phi^H$ in terms
of the properties of $H$. This will prepare us to apply Theorem~\ref{th3.3}
to $\phi^H$, where $H$ is selected according to the $\th$-invariant measure $\bP$.

\bp\la{pro6.1} 
The following statements hold:
\begin{itemize}
\item[(i)] We have the following equalities $$\phi^{\th_a H}=\th_{-a}\circ \phi^H\circ \th_a= \th'_a\phi^H.$$ In particular, if
$\cG:\cH\to\cS$ is defined by $$\cG(H)=\phi^H_1=\phi^H,$$ then 
$\cG$ pushes forward any $\th$-invariant ergodic measure $\bP$
on $\cH$, to a $\th'$-invariant ergodic measure $\cP$
on $\cS$.

\ms
\item[(ii)] Let $\bP$ be a $\th$-invariant probability measure such such that 
\[
\int_{\cH} \|H\|_{{\rm C}^1}^{2d+1}\ \bP({\rm d}H)<\i.
\]
Then, 
$$
\int_{\cH}  \cG(H)(0)\ \bP({\rm d}H)=0.
$$

\ms
\item[(iii)] For $H\in \cC^2(\ell)$, we have $\cF(\phi^H)\leq \ell$, and 
\be\la{eq6.2}
\big\|{\rm D}\phi^{H}-I\|_{{\rm C}^0}\leq ( {\rm e}^{\ell}-1).
\ee
In particular, $\phi^H$ is a twist map if ${\rm e}^\ell<2$.

\ms
\item[(iv)] For $H,H'\in \cC^2(\ell)$, we have
\be\la{eq6.3}
\|\phi^{H'}-\phi^{H}\|_{{\rm C}^0}\leq {\rm e}^{\ell}\|\nabla H'-\nabla H\|_{{\rm C}^0}.
\ee

\ms
\item[(v)] Assume that $\bP$ is concentrated on $\cC^2(\ell)$ for some 
$\ell>0$. Assume 
\be\la{eq6.4}
\int_{\cH}\int_{\bR^{2d}}\int_0^1\int_0^1
|\nabla H(x,t)\cdot \nabla H(0,s)|\ |L(x)|\ {\rm d}x \, {\rm d}t \, {\rm d}s\ 
\bP({\rm d}H)<\i.
\ee
Then the map $H\mapsto \phi^H(0)$ is in $\widehat{\mathfrak H}^{-1}(\bP)$:
\be\la{eq6.5}
\int_{\cH}\int_{\bR^{2d}}
\big|\big(\phi^H(x)-x\big)\cdot \phi^H(0)|\ |L(x)|\ {\rm d}x\ \bP({\rm d}H)<\i.
\ee

\ms
\item[(vi)] If $H\in\cC^2(\ell)$ and $\nabla H$ is almost periodic, then 
$\cF\big(\phi^H\big)$ is almost periodic.

\ms
\item[(vii)]  Assume 
that $H\in\cC^2(\ell)$ and $ H$ is quasiperiodic i.e.
 we can find an integer $N\geq n$,
 a $1$-periodic function $$K:\bR^N\to\bR,$$
 a matrix $A\in\bR^{N\x n} $,  and $\o\in\bR^N$,
such that $$H(x,t)=H(x,t,\o)=K(\Theta_{Ax}\o,t).$$ (Here $\Theta_a$ denotes the translation
of $\bR^N$.)
Assume that the null set of $A$ is trivial:
\[
Ax=0\ \ \ \implies\ \ \  x=0.
\]
Then 
$\cF\big(\phi^H\big)$ is quasiperiodic.
\end{itemize}
\ep

\begin{proof} 
{(i)} This is an immediate consequence of the fact that 
if $y(\cdot)$ is an orbit of $X_{\th_a H}$, then $$x(\cdot)=\th_{a}
y(\cdot)=y(\cdot)+a$$ is an orbit of $X_{H}$.

\ms\noi
{(ii)}  Let us write ${\rm B}_\ell={\rm B}_\ell(0)$ for the ball of radius $\ell$
that is centered at the origin.
Since
\[
\phi^H_1(x)-x=\int_0^1J\nabla H\big(\phi^H_t(x),t\big)\ {\rm d}t,
\]
 we have 
\begin{eqnarray}\la{eq6.6}
\int_{{\rm B}_\ell}\big(\phi^H_1(x)-x\big)\ {\rm d}x &=& 
\int_0^1J\int_{{\rm B}_\ell}
\nabla H\big(\phi^H_t(x),t\big)\ {\rm d}x\  {\rm d}t  \\
&=& \int_0^1J\int_{\phi_t^H({\rm B}_\ell)}\nabla H(x,t)\ {\rm d}x\  {\rm d}t. \nonumber 
\end{eqnarray}
Note that since  
\begin{eqnarray}
\big|\phi_t^H(x)-x\big| &\leq&  t\sup|\nabla H| \nonumber \\
&\leq& \|H\|_{{\rm C}^1} \nonumber \\
&=:& c_0, \nonumber
\end{eqnarray}
for $t\in[0,1]$, we have
$$
{\rm B}_{\ell-c_0}\subset \phi_t^H({\rm B}_\ell)\subset {\rm B}_{\ell+c_0}.
$$
From this and \eqref{eq6.6} we learn
\begin{eqnarray}
&& \left|\int_{{\rm B}_\ell}\big(\phi^H_1(x)-x\big )\ {\rm d}x\right| \nonumber \\
&\leq& \left|\int_0^1J\int_{{\rm B}_\ell}\nabla H(x,t)\ {\rm d}x\  {\rm d}t\right|
+\int_0^1\int_{{\rm B}_{\ell+c_0}\setminus {\rm B}_{\ell-c_0}}|\nabla H(x,t)| \
{\rm d}x \ {\rm d}t \nonumber \\
&\leq& \left|\int_0^1J\int_{\p {\rm B}_\ell} H(x,t)\nu(x)\ \s({\rm d}x)\  {\rm d}t
\right|+c_0\big |{\rm B}_{\ell+c_0}\setminus {\rm B}_{\ell-c_0}\big|  \nonumber \\
&\leq& c_0\s\big(\p {\rm B}_\ell\big)+ c_0\int_{\ell-c_0}^{\ell+c_0}\s(\p {\rm B}_r)\ {\rm d}r \nonumber \\
&\leq& c_0\s\big(\p {\rm B}_1\big)\big(\ell^{2d-1}+
2c_0(\ell+c_0)^{2d-1}\big) \nonumber \\
&\leq& 
c_1\ell^{2d-1}(1+c_0)^{2d+1}, \nonumber
\end{eqnarray}
where $$\nu(x)=\frac{x}{|x|}$$ is the outward unit normal at $x\in\p {\rm B}_\ell$,  
$\s({\rm d}x)$ denotes the $2d-1$-surface
 measure on $\p {\rm B}_\ell$, and 
$c_1$ is a constant that depends on $d$ only.
 Hence, by stationarity of $\bP$, 
\begin{align*} 
\left|\int_{\cH} \phi^H_1(0)\ \bP({\rm d}H)\right|&=
\left|\int_{\cH} \left[|{\rm B}_\ell|^{-1}\int_{{\rm B}_\ell}\big(\phi^H_1(x)-x\big)\ {\rm d}x\right]
\ \bP({\rm d}H)\right|\\
&\leq c_2 \ell^{-1} \int_{\cH}\big(1+\|H\|_{{\rm C}^1})\big)^{2d+1} \ 
\bP({\rm d}H),
\end{align*}
for a constant $c_2$. We now send $\ell\to\i$ to complete the proof.

\ms\noi
(iii) Evidently,
\[
\big|\Phi^H(x)-x\big|\leq \left|\int_0^1 J\nabla H(\phi^H_s(x),s)\ {\rm d}s\right|\leq \ell.
\]
On the other hand, if 
\[
V(x,t)=J {\rm D}^2 H\big(\phi^H_t(x),t\big),\ \ \ \ A(x,t)={\rm D}\phi_t^H(x),
\]
then
\[
A_t(x,t)=V(x,t)A(x,t),
\]
which leads to the identity
\[
A(x,t)=I+\sum_{k=1}^\i\int_{\D_n(t)}V(x,t_n)\dots V(x,t_1)\ 
{\rm d}t_1\dots {\rm d}t_n,
\]
where
\[
\D_n(t)=\big\{(t_1,\dots,t_n):\ 0\leq t_1\leq \dots\leq t_n\leq t\big\}.
\]
From this we deduce \eqref{eq6.2}
because $|V|\leq \ell$ by \eqref{eq6.1}.

\ms\noi
(iv) Note that if $H\in \cL^2(\ell)$, then \eqref{eq6.1} implies that the Lipschitz constant of the vector field $J\nabla H$ is at most $\ell$.
Using this, we can write,
\begin{align*}
\frac {\rm d}{{\rm d}t}\left[{\rm e}^{-\ell t}\big|\phi_t^{H'}(x)-\phi_t^H(x)\big|\right]\leq&
{\rm e}^{-\ell t}\left|J\nabla H'(\phi_t^{H'}(x),t)-J\nabla H(\phi_t^H(x),t\big)\right|\\
&-\ell {\rm e}^{-\ell t}\big|\phi_t^{H'}(x)-\phi_t^H(x)\big|\\
\leq&
{\rm e}^{-\ell t}\left|J\nabla H(\phi_t^{H'}(x),t)-J\nabla H(\phi_t^H(x),t\big)\right|\\
&+{\rm e}^{-\ell t}\|\nabla H'-\nabla H\|_{{\rm C}^0}
-\ell {\rm e}^{-\ell t}\big|\phi_t^{H'}(x)-\phi_t^H(x)\big|\\
\leq &{\rm e}^{-\ell t}\|\nabla H'-\nabla H\|_{{\rm C}^0}.
\end{align*}
Integrating both sides with respect to $t$ yields
\[
{\rm e}^{-\ell }\big|\phi^{H'}(x)-\phi^H(x)\big|
\leq \|\nabla H'-\nabla H\|_{{\rm C}^0},
\]
which in  \eqref{eq6.3}.

\ms\noi
(v)
By stationarity, the left-hand side of \eqref{eq6.5} equals to
\begin{align*}
&\frac 1{|{\rm B}_1(0)|}\int_{|a|\le 1}\int_{\cH}\int_{\bR^{2d}}
\left|\big(\phi^H(x)-x\big)\cdot \big(\phi^H(a)-a\big)\right|\ |L(x-a)|\ {\rm d}x {\rm d}a\ \bP({\rm d}H)\\
&=\frac 1{|{\rm B}_1(0)|}\int_{|a|\le 1}\int_{\cH}\int_{\bR^{2d}}
\left|\left(\int_0^1X\big(\phi_t^H(x),t\big)\ {\rm d}t\cdot \int_0^1 X
(\phi_s^H(a),s)\ ds\right)\right|\ |L(x-a)|\ {\rm d}x \ {\rm d}a\ \bP(\rm dH)\\
&\le \frac 1{|{\rm B}_1(0)|}\int_{\cH}\int_0^1\int_0^1\int_{|a|\le 1}\int_{\bR^{2d}}
\left|\nabla H\big(\phi_t^H(x),t\big)\cdot \nabla H
(\phi_s^H(a),s)\right|\ |L(x-a)|\ {\rm d}x \  {\rm d}a\ {\rm d} t \ {\rm d}s\ \bP(\rm dH),
\end{align*}
where $X(x,t)=J\nabla H(x,t)$. We now make a (volume preserving) change of
variable 
\[
(y,z)=\big(\phi_t^H(x),\phi_s^H(a)\big),
\]
for the ${\rm d}x\rm da$ integration to rewrite the last expressions as
\[
\frac 1{|{\rm B}_1(0)|}\int_{\cH}\int_0^1\int_0^1\int_{\phi_s^H({\rm B}_1(0))}\int_{\bR^{2d}}
\left|\nabla H(y,t)\cdot \nabla H
(z,s)\right|\ \big|L\big(\psi_t(y)-\psi_s(z))\big|\ 
{\rm d}y \ {\rm d}z\ {\rm d}t \ {\rm d}s\ \bP(\rm dH),
\]
where $\psi_t$ is the inverse of $\phi_t^H$. This expression is bounded above
by 
\[
\frac 1{|{\rm B}_1(0)|}\int_{\cH}\int_0^1\int_0^1\int_{{\rm B}_{\ell+1}(0)}\int_{\bR^{2d}}
\left|\nabla H(y,t)\cdot \nabla H
(z,s)\right|\ \big|L\big(\psi_t(y)-\psi_s(z))\big|\ {\rm d} y \ {\rm d} z \ {\rm d}t \ {\rm d}s\ \bP(\rm dH),
\]
because 
\[
\phi_s^H({\rm B}_1(0))\subset {\rm B}_{\ell+1}(0),
\]
$\bP$-almost surely by \eqref{eq6.1}.
Observe that $\psi_t$ is the flow of the Hamiltonian ODE associated with $J\nabla H$
with time reversed. So using \eqref{eq6.1},
\[
|\psi_t(y)-y|\ ,\ |\psi_s(z)-z|\le \ell,
\]
$\bP$-almost surely, for $y,z\in\bR^{2d}$, and $s,t\in[0,1]$. As a result,
\be\la{eq6.7}
|y-z|\geq 4\ell\ \ \  \implies \ \ \  \big|\psi_t(y)-\psi_s(z)\big|\geq |y-z|-2\ell
\geq \frac 12|x-y|.
\ee
We now assume that $d>1$ so that $L(x)$ is given by a constant multiple
of $|x|^{2-2d}$. From of this, and \eqref{eq6.7}, we learn that
the left-hand side of \eqref{eq6.5} is bounded above by
\[
\L_1+\L_2,
\]
where
\begin{align*}
\L_1=&\frac 1{|{\rm B}_1(0)|}\int_{\cH}\int_0^1\int_0^1\iint_{E_{\ell}}
\left|\nabla H(y,t)\cdot \nabla H
(z,s)\right|\ \big|L\big(\psi_t(y)-\psi_s(z))\big|\  {\rm d} y \ {\rm d} z \ {\rm d}t \ {\rm d}s\ \bP(\rm dH),\\
\L_2=&c_0\ \frac 1{|{\rm B}_1(0)|}\int_{\cH}\int_0^1\int_0^1\int_{{\rm B}_{\ell+1}(0)}\int_{\bR^{2d}}
\left|\nabla H(y,t)\cdot \nabla H
(z,s)\right|\ \big|L(y-z)|\  {\rm d} y \ {\rm d} z \ {\rm d}t \ {\rm d}s\ \bP(\rm dH),
\end{align*}
for a constant $c_0$, and
\[
E_\ell=\big\{(y,z)\in\bR^{2d}:\ |y|\le \ell+1,\ |y-z|\le 2\ell\big\}.
\]
By stationarity of $\bP$, 
\begin{align*}
\L_2=&c_0\ \frac 1{|{\rm B}_1(0)|}\int_{\cH}\int_0^1\int_0^1\int_{{\rm B}_{\ell+1}(0)}\int_{\bR^{2d}}
\left|\nabla H(y-z,t)\cdot \nabla H
(0,s)\right|\ \big|L(y-z)|\  {\rm d} y \ {\rm d} z \ {\rm d}t \ {\rm d}s\ \bP(\rm dH)\\
=&c_0\ \frac {|{\rm B}_{\ell+1}(0)|}{|{\rm B}_1(0)|}\int_{\cH}\int_0^1\int_0^1\int_{\bR^{2d}}
\left|\nabla H(x,t)\cdot \nabla H
(0,s)\right|\ \big|L(x)|\ {\rm d}x\ {\rm d} t \ {\rm d}s\ \bP(\rm dH),
\end{align*}
and this is finite by our assumption \eqref{eq6.4}. It remains to show $\L_1<\i$.
Indeed from our assumption, \eqref{eq6.1} holds $\bP$-almost surely, which yields
the bound
\[
\L_1\le \frac {\ell^2}{|{\rm B}_1(0)|}\int_{\cH}\iint_{E_{\ell}}
 \big|L\big(\psi_t(y)-\psi_s(z)\big)\big|\  {\rm d} y \ {\rm d} z\  \bP(\rm dH).
\]
We make the change of variable 
\[
(x,a)=\Psi(y,z):=\big(\psi_t(y),\psi_s(z)\big),
\]
to rewrite the integral as
\[
\int_{\cH}\iint_{\Psi(E_{\ell})}
 |L(x-a)|\ {\rm d}x \ {\rm d}a\  \bP(\rm dH).
\]
Using \eqref{eq6.1}, we have
\[
\Psi(E_{\ell})\subset {\rm B}_{2\ell+1}(0)\x {\rm B}_{4\ell+1}(0).
\]
From this and local integrability of $L(x)$ we deduce that $\L_1<\i$.

The case $d=1$ can be treated in a similar fashion.

\ms\noi
(vi) Let us write
\begin{eqnarray}
O(H)&:=&\{\th_a H:\ a\in\bR^n\big\}, \nonumber \\
\widehat O(H)&:=&\{\th'_a\phi^H:\ a\in\bR^n\big\}. \nonumber
\end{eqnarray}
By part (i), we know 
\be\la{eq6.8}
\cG\big(O(H)\big)=\widehat O(H).
\ee
If $H\in \cC^2(\ell)$ and $\nabla H$ is almost periodic, then $$O(H)\subset \cC^2(\ell),$$
and $O(\nabla H)$ is precompact with respect to ${\rm C}^0$-topology.
From this and \eqref{eq6.8} we deduce the precompactness of $\widehat O(H)$.
Since 
\[
\th'_a\phi^H-\th'_b\phi^H=\th_a\cF(\phi^H)-\th_b\cF(\phi^H),
\]
we deduce the precompactness of the set $\big\{\th_a\cF(\phi^H)\big\}$.
As a consequence, the map 
$\cF\big(\phi^H\big)$ is almost periodic.

\ms\noi
(vii)  Set $\z_t(\o)=\phi_t^{H(\cdot,\o)}(0)$.
We claim that $\z$ is periodic.
Observe that if $x(t)$ solves the ODE
\[
\dot x(t)=J\nabla H(x(t),t)=JA^*\nabla K(\o+Ax(t),t),\ \ \ \ x(0)=0,
\]
then $\o(t):=\o+Ax(t)$ satisfies
\[
\dot \o(t)=AJA^*\nabla K(\o(t),t).
\]
From this we learn that if $\psi_t$ is the flow of the vector field
$$\hat X:=AJA^*\nabla K,$$  then
\[
\psi_t(\o)=\o+A\ \phi_t^{H(\cdot,\o)}(0)=\o+A\z_t(\o).
\]
 Since $\hat K$ is periodic, we learn that $\cF(\psi_t)=\psi_t-{\rm id}$ is periodic
by part (i). Hence $A\z_t(\o)$ is periodic. 
Since $A$ has a trivial null space, we deduce that $\z_t$ is periodic.
On the other hand,
\begin{align*}
\phi^{H(\cdot,\o)}(x)-x=&\big(\th'_x \phi^{H(\cdot,\o)}\big)(0)=
\big( \phi^{\th_xH(\cdot,\o)}\big)(0)=
\big( \phi^{H(\cdot,\th_x\o)}\big)(0)\\
=&\z_1(\th_x\o)=\z_1(\o+Ax).
\end{align*}
From this, and the periodicity of $\z_1$, 
we deduce the quasiperiodicity of the left-hand side.
\end{proof} 

\subsection{Proof of Theorem~\ref{th1.3}}
We are now ready to offer a more precise statement of Theorem~\ref{th1.3}
and give a proof.

\bth\la{th6.2} Assume that  $H(x,t,\o)=K(\Theta_{Ax}\o,t)$, for a ${\rm C}^1$ function $K:\bT^{N}\x\bT\to\bR$. Let $\bP$ denotes the Lebesgue measure
on $\bT^N$, and assume $K\in\cC^2(\ell)$, for some $\ell\in(0,\log 2)$,
and that \eqref{eq6.4} holds.
Then the set
\[
\big\{x\in\bR^{2d}:\ \phi^{H(\cdot,\o)}(x)=x\big\},
\]
is of positive (possibly infinite) density, $\bP$-almost surely.
\et

\begin{proof}
It suffices to show that the conditions of Theorem~\ref{th5.1} hold true
for 
\[
\Phi(x,\o)=\phi^{H(\cdot,\o)}(x).
\]
These conditions have been verified in Propositions~\ref{pro6.1},
parts (i)--(iii), (v), (vii).
\end{proof}

\ms
\begin{remark} As we discussed in Example~\ref{Example3.6},
a Diophantine-type condition on $A$, and the existence of certain number of derivatives of the function $K$ would guarantee the validity of \eqref{eq6.4} for the corresponding Hamiltonian function $H$ in the quasiperodic setting of Theorem~\ref{th6.2}.
\end{remark}

\section{Appendix. The $2$ dimensional case: Random Poincare\--Birkhoff theorem} \label{sec2}
 
 To put in context the stochastic 
 Conley\--Zehnder theory we develop in this paper, we very briefly review the simpler case of stochastic symplectic maps in dimension $2$.
 In order to describe our results, 
let us write, following~\cite{PR}, $\cT$ for the space of area
preserving twist maps.  Let $\overline \cT$ be the space of 
maps $$\bar F:(\cA= \mathbb{R}\times [-1,\,1])  \to\cA$$ such that if 
$$
\ell(\bar F)(q,p):=(q,0)+\bar F(q,p),
$$
then $\ell(\bar F)\in \cT$. Consider the operator $\ell:\overline \cT\to\cT$ which send $\bar F$ to 
$F=\ell(\bar F)$. 
So we have a family of shifts $$\big\{\tau_a:\overline\cT\to\overline \cT: \ a\in\bR\big\},$$ 
defined by
$
\tau_a\bar F(q,p)=\bar F(q+a,p).
$
For any $ F\in\cT$ we write 
\[
{\rm Fix}( F)=\big\{x\in\cA:\ F(x)=x\big\}.
\]
Also, with a slight abuse of notation we write $\tau_a$ instead of
$$
\tau_aA=\{x:\ x+a\in A\}.
$$
 We then have the trivial commutative relationship
\begin{equation}\label{shift}
\tau_a {\rm Fix}(\ell(\bar F))={\rm Fix} (\ell(\tau_a \bar F)).
\end{equation}
Furthermore,  we adopt the following notations and terminologies: we denote by
 $\overline{\cM\cT_+}$ the space of $$\bar F=(\bar Q,\bar P)\in\overline\cT$$ 
with the property that for every $q$ we have that the function  $f\colon [-1,\,1]\to \mathbb{R}$ defined by $f(p):=\bar Q(q,\, p)$ is 
 increasing; we denote by $\cM\cT_+$ the space of $\ell(\bar F)$, with $\bar F\in \overline{\cM\cT_+}$; we denote by $\cM\cT_-$ the space of  $F$ such that $F^{-1}\in\cM\cT_+$; 
 the elements of $\cM\cT_+$ are the positive monotone twist maps; the elements  of $\cM\cT_-$ are the negative monotone twist maps; 
a fixed point $x=(q,\,p)$ of
$F(\cdot,\,\cdot) \colon \mathcal{S} \to
\mathcal{S}$ is of  $+$
(respectively $-$) type if the eigenvalues of ${\rm d}F(q,\,p)$ are positive 
(respectively negative); finally we write
\[
{\rm Fix}_{\pm}(F)=\big\{x \in {\rm Fix}(F):\ x {\text{ is of }}\pm{\text{ type}}\big\}.
\]

For any 
 $F=\ell(\bar F)\in\cM\cT_+$ there is  a scalar\--valued function $$\cG(q,Q)=\cG(q,Q;\bar F)$$ such that 
\begin{equation}\label{gen}
F(q,-\cG_q(q,Q))=(Q,\cG_Q(q,Q)).
\end{equation}
Due to the existence of boundary conditions for $$F(q,p)=(P(q,p),Q(q,p)),$$ 
we only need to define $\cG(q,Q)$ for $(q,Q)$ such that
$Q(q,-1)\leq Q\leq Q(q,+1)$. This means in particular that
$
\psi(q;\bar F):=\cG(q,q;\bar F)
$
is well defined. By~\eqref{shift}, $$\psi(\cdot;\tau_a\bar F)=\tau_a\psi(\cdot;\bar F).$$

In~\cite[Theorem~B]{PR} we saw that if $\bQ$ is a translation invariant ergodic probability measure
on $\big(\overline\cT,\cF\big)$ satisfying that $$\bQ\big(\overline{\cM\cT_+}\big)=1$$ then all the sets
${\rm Fix}_{\pm}\big(\ell(\bar F)\big)$ are nonempty with probability one with respect to $\bQ$. Moreover,
also with probability one, if the random pair 
\[
\left(\frac {\rm d}{{\rm d}q}\psi(q;\bar F),\frac {{\rm d}^2}{{\rm d}q^2}\psi(q;\bar F)\right),
\]
  has a probability density $\rho(a,b;\bar F)$ (which is independent of $q$ due to the translation invariance), 
then the sets ${\rm Fix}_{\pm}\big(\ell(\bar F)\big)$ have positive density $\l_{\pm}$ given by
\[
\l_{\pm}=\int\left[\int_{-\infty}^{\infty} b^{\pm}\rho(0,b;\bar F)\ {\rm d}b\right]\ \bQ({\rm d}\bar F).
\]

Now denote by $\O_0$ the space of functions $\o \colon \bR^2\to\bR$
such that $\o(q,a)>0$ for $a>0$, $\o(q,0)=0$,  and, 
$$
\eta(q;\o)=\inf\{a\ |\ \o(q,a)=2\}<\i,
$$
for every $q$.  Define 
\begin{eqnarray}
Q^-(q;\o)&=&\frac 12\int_0^{\eta(q;\o)} \o(q,a) \ {\rm d}a-\eta(q;\o); \nonumber\\
G(q,Q;\o)&=&\o(q,Q-q-Q^-(q;\o)). \nonumber
\end{eqnarray}
Denote by $\O_1$  the space of all $\o\in\O_0$ satisfying that $G_q(q,Q;\o)<0$ for every $(q,Q)$.
In \cite[Theorem~C]{PR} we proved that for each $\o\in\O_1$ there is a unique function 
$\bar F(\cdot,\cdot;\o)$ such that if 
$$F(\cdot,\cdot;\o)=\ell\big(\bar F(\cdot,\cdot;\o)\big)$$ then  (\ref{gen}) holds for
$\cG(q,Q)=\cG(q,Q;\o)$, that is given by
\[
\cG(q,Q;\o)=\int_{q+Q^-(q;\o)}^{Q}\o(q,a) \ {\rm d}a-(Q-q).
\]
Moreover, we proved that if $$\tau_a\o(q,v)=\o(q+a,v)$$
then we have that $$\bar F(\cdot,\cdot;\tau_a\o)=\tau_a\bar F(\cdot,\cdot;\o).$$

To continue the discussion of results on the $2$\--dimensional case let $\O_2$ denote the set ${\rm C}^2$ Hamiltonian functions $\o(q,p,t)$ with the property that they have uniformly bounded second derivatives 
and such that $\pm \o_p(q,\pm 1,t)>0$ and $\o_q(q,\pm 1,t)=0$. If $\o\in\O_2$ let
$$\tau_a\o(q,p,t)=\o(q+a,p,t)$$ as before, and also let $$\phi_t^\o(q,p)$$ be the flow of the corresponding Hamiltonian system $$\dot q=\o_p(q,p,t), \dot p=-\o_q(q,p,t).$$
One can show that if 
$$F^t(q,p;\o)=\phi_t^\o(q,p)$$ and $$\bar F^t(q,p;\o)=\phi_t^\o(q,p)-(q,0)$$ then $$F^t(\cdot,\cdot;\o)\in\cT$$ and 
$$
\bar F^t\big(q,p;\tau_a\o\big)=\tau_a\bar F^t\big(q,p;\o\big).
$$
With this in mind, we proved in~\cite[Theorem~D]{PR} that 
if $\bP$ is a $\tau$-invariant ergodic probability measure $\bP$ on $\O_2$,  then for every $t \geq 0$ we have that
$$\bP(\# {\rm Fix}(F^t(\cdot,\,\cdot,\,;\omega))=\infty)=1.$$

Denote by $\cC([0,1];\overline\cT)$ the space of 
${\rm C}^1$ maps  $\g:[0,1]\to \overline\cT$ for which $\ell(\g(0))$ is the identity.
The operator $\tau$ on $\overline\cT$ induces a new  operator (denoted in the same way)
 on $\cC([0,1];\overline\cT)$ defined by $$(\tau_a\g)(t)=\tau_a(\g(t)).$$ 
 
We know that if  $\bP$ is a stationary ergodic measure on $\O_2$ then
\[
\o\mapsto \big( \bar F^t(q,p;\o)=\phi_t^\o(q,p)-(q,0):\ t\in[0,1]\big),
\] 
pushes forward $\bP$ onto a stationary ergodic probability measure $\cQ$ on $\cC([0,1];\overline\cT)$.
The converse also holds;  a stationary ergodic probability measure $\cQ$ on $\cC([0,1];\overline\cT)$ 
always comes from a unique a stationary ergodic measure $\bP$ on $\O_2$. 

Let $\bQ$ be  a stationary ergodic measure on $\overline\cT$. A natural question is whether we can find a stationary ergodic measure $\cQ$ on $\cC([0,1];\overline\cT)$
 such that $\bQ$ is the push forward of $\cQ$ under the time-1 map
$\pi_1:\cC([0,1];\overline\cT)\to \overline\cT$ (by time-1 map we mean $\pi_1\g:=\g(1)$).  In order to discuss this question
let $\cD$ denote the space of diffeomorphisms $F:\cA\to\cA$.  
We also denote by $\overline\cD$ the space of functions $\bar F$ with the property that $\ell(\bar F)\in\cD$. 

Let $\cQ$ be a stationary ergodic measure on $\cC([0,1];\overline\cD)$. In~\cite{PR} we called $\cQ$  regular if 
\[
\int \sup_{t\in[0,1]}\left[\left\|\dot\g(t)\right\|_{\i}+\left\|{\rm d}\g(t)\right\|_{\i}+
\left\|{\rm d}\g(t)^{-1}\right\|_{\i}\right]\ \cQ({\rm d}\g)<\i.
\]
Here $\|\cdot\|_\i$ denotes the ${\rm L}^\i$ norm and $\dot\g(t)$ and  ${\rm d}\g(t)$ denote the derivatives of $\g(t)$
with respect to $t$ and $x=(q,p)$, and 
$$\frac 12\int\left[\int_{-1}^{1} \det({\rm d} \g(t)(q,p))\,{\rm d}p\right]\ \cQ({\rm d}\g)=1;$$ 
for all $t\in[0,1]$. 

If we start with a stationary ergodic measure $\bQ$ on $\overline\cT$, in \cite[Theorem~E]{PR} we proved 
that if there exists  a regular stationary ergodic measure $\cQ$ on $\cC([0,1];\overline\cD)$,
 such that $\bQ$ is the push forward of $\cQ$ under the time-1 map
 $\pi_1\g:=\g(1)$, then there is another  stationary ergodic measure $\cQ'$ on $\cC([0,1];\overline\cT)$
with the property that $\bQ$ is the push forward of $\cQ$ under $\pi_1$.

Finally, our paper~\cite{PR} concluded by showing that 
if $\bP$ is a $\tau$-invariant ergodic probability measure $\bP$ on $\O_2$ and  $F=F^1$ is as in the result we  described earlier,
 then there exists~\cite[Theorem~E]{PR} a deterministic integer $N \geq 0$ and  area\--preserving random twists
$F_j$,  $0 \leq j \leq N,$
such that for $\bP$ almost all $\omega \in \Omega_2$, we have a decomposition:
$$
F(\cdot,\cdot;\omega)=F_N(\cdot,\cdot;\omega)\circ \ldots\circ F_2(\cdot,\cdot;\omega)\circ F_1(\cdot,\cdot;\omega)\circ F_0(\cdot,\cdot;\omega),
$$
where  the map $F_j$ is positive monotone if $j$ is an odd integer, $F_j$ is negative monotone if  $j$ is an even integer,   and 
$$\bar F_j(q,p;\tau_a\o)=\tau_a\bar F_j(q,p;\o)$$ for every $j$.

\bigskip

\bigskip

\textup{\,}
\\
{\bf \'Alvaro Pelayo} \\
alvpel01@ucm.es\\
Facultad de Ciencias Matem\'aticas \\
Universidad Complutense de Madrid\\
 28040 Madrid, Spain

\textup{\,}
\\
{\bf Fraydoun Rezakhanlou} \\
rezakhan@math.berkeley.edu\\
Department of Mathematics \\
University of California, Berkeley \\
Berkeley, CA 94720-3840 USA

\end{document}